\documentclass[reqno]{amsart}

\usepackage{mathrsfs}
\usepackage{amsthm}
\usepackage{amscd}
\usepackage{amsmath}
\usepackage{amssymb}
\usepackage{extarrows}
\usepackage[colorlinks]{hyperref}
\usepackage{url}

\theoremstyle{definition}
\newtheorem{defn}{Definition}[section]

\theoremstyle{plain}
\newtheorem{thm}{Theorem}[section]
\newtheorem*{thm*}{Theorem}

\newtheorem{prop}[thm]{Proposition}
\newtheorem{lem}[thm]{Lemma}

\newtheorem{clm}{Claim}

\theoremstyle{remark}
\newtheorem*{rmk}{Remark}

\newcommand{\ve}{\varepsilon}

\newcommand{\mbbn}{\mathbb{N}}

\newcommand{\mbbr}{\mathbb{R}}

\newcommand{\mbbz}{\mathbb{Z}}

\newcommand{\mcld}{\mathcal{D}}

\newcommand{\mclg}{\mathcal{G}}

\newcommand{\mcli}{\mathcal{I}}

\newcommand{\mclm}{\mathcal{M}}

\newcommand{\mclo}{\mathcal{O}}
\newcommand{\mclp}{\mathcal{P}}

\newcommand{\mcls}{\mathcal{S}}

\newcommand{\mclu}{\mathcal{U}}
\newcommand{\mclv}{\mathcal{V}}
\newcommand{\mclw}{\mathcal{W}}

\newcommand{\whK}{\widehat{K}}

\newcommand{\dif}{\:\!\mathrm{d}}

\newcommand{\supp}{\mathrm{supp}}


\DeclareMathOperator{\dist}{dist} 

\DeclareMathOperator{\lip}{lip}

\title{On ergodic optimization for unimodal maps}

\author[B. Gao]{Bing Gao}
\address{Bing Gao: School of Mathematics, Hunan University, Changsha 410082, China}
\email{binggaomath@outlook.com}

\author[R. Gao]{Rui Gao}

\address{Rui Gao: School of Mathematical Sciences, Qufu Normal University, Jining 273165, China}
\email{gaoruimath@qfnu.edu.cn}

\begin{document}

\begin{abstract}
In this article, we show that for a typical non-uniformly expanding unimodal map, the unique maximizing measure of a generic Lipschitz function is supported on a periodic orbit.
\end{abstract}

\maketitle

\section{Introduction}\label{se:intro}


\subsection{Backgrounds} 
Let $X$ be a compact metric space and let $C(X)$ denote the collection of real-valued continuous functions on $X$. Let $T:X\to X$ be a continuous map and let $\mclm_T$ denote the collection of $T$-invariant Borel probability measures on $X$. Given $\phi\in C(X)$,   denote
\begin{equation}\label{eq:max int}
  \beta(\phi):=\max_{\mu\in \mclm_T}\int \phi\dif\mu.
\end{equation}
If $\mu\in  \mclm_T$ attains the maximum value in \eqref{eq:max int}, then we say that $\mu$ is a {\bf maximizing measure} of $\phi$, or simply $\mu$ {\bf maximizes} $\phi$. 
If a maximizing measure $\mu$ is supported on a periodic orbit $\mclo$ of $(X,T)$, then we also say that $\phi$ is {\bf maximized} by $\mclo$. From a dynamical view, the {\bf ergodic optimization} problem of $(X,T)$ concerns typical behavior of maximizing measures of $\phi$ for $\phi$ chosen in a suitable subspace of $C(X)$. 

Since the pioneering works of Hunt and Ott \cite{HO96L,HO96E}, and of Yuan and Hunt \cite{YH96}, it is conjectured that, when the system $(X,T)$ has strongly enough hyperbolic behavior (e.g., uniformly expanding, Anosov), for typical $\phi$ chosen in a real Banach space consisting of sufficiently regular  functions (e.g., Lipschitz, $C^1$),  $\phi$ will be (uniquely) maximized by a periodic orbit. Here ``typical" could be understood in either a probabilistic (namely, ``prevalent") or a topological (namely, ``generic")  way. Following Jenkinson \cite{Jen19}, we call this conjecture the {\bf typically periodic optimization}  ({\bf TPO} for short) conjecture. In an early stage of researches on this kind of problems, a remarkable result of Bousch \cite{Bou00} showed that for the circle doubling map $x\mapsto 2x, x\in \mbbr/\mbbz$, TPO (in both  probabilistic and topological senses) holds for the two dimensional space spanned by $\cos(2\pi x)$ and  $\sin(2\pi x)$. Another remarkable work around the same period is \cite{CLT01}, in which Contreras, Lopes and Thieullen proved a version of the topological TPO conjecture for expanding maps of the circle, and developed a series of tools for ergodic optimization research originated from Lagrangian dynamics. 
 
 As an important progress in ergodic optimization, Contreras \cite{Con16} proved that when $(X,T)$ is uniformly expanding, topological TPO holds for Lipschitz functions. More recently, Huang et al. \cite{HLMXZ19} generalized this result to uniformly hyperbolic systems and to $C^1$ functions. Our motivation in this article is attempting to generalize the result of  Contreras mentioned above to some non-uniformly expanding systems, so we shall only focus on the topological TPO conjecture, and omit ``topological" in this term for simplicity. For discussions on the probabilistic TPO conjecture we refer to Bochi \cite{Boc18} and references therein, and for other aspects in ergodic optimization we refer to \cite{Boc18,Jen19}.

\subsection{Statement of results}

The results proved by Contreras \cite{Con16} and by Huang et al. \cite{HLMXZ19} are slightly stronger than the statement in the TPO conjecture mentioned before. To quote them (see the original papers \cite{Con16,HLMXZ19} or Theorem~\ref{thm:Contreras} below for the precise statements) and to state our results, let us introduce the following terminology for convenience. Let $X$ be a compact metric space and let $Lip(X)$ be the Banach space of real-valued Lipschitz functions on $X$ endowed with the Lipschitz norm. For a continuous map $T:X\to X$, we say that {\bf Lipschitz TPO holds} for $(X,T)$, if there exists a dense (and open) subset $\mclg$ of $Lip(X)$, such that for any $\phi\in \mclg$, there exists an open neighborhood $\mclv$ of $\phi$ in $Lip(X)$ and a periodic orbit $\mclo$ such that each function in $\mclv$ is (uniquely) maximized by $\mclo$. 

When the dynamical behavior of $(X,T)$ is beyond uniformly expanding/hyperbolic (but still non-uniformly expanding/hyperbolic), there is little known for the TPO conjecture in the literatures. In this article, as one of the first attempts to attacking this kind of problems, the results in \cite{Con16,HLMXZ19} will be our cornerstone. Therefore, we shall restrict ourselves to working only on Lipschitz functions and choose to consider systems not far from being uniformly expanding/hyperbolic. For simplicity, we shall focus on one of the simplest situations: the phase space $X$ will be a compact interval and the map $T:X\to X$ will be a unimodal map with strong expanding properties.  

To state our results, we shall impose some conditions on unimodal maps that are extensively studied in one-dimensional dynamics. Since their descriptions need quite a few words, we choose to postpone introducing them in \S~\ref{sse:unimodal} and to state the results first. Our main result in this article is the following.

\begin{thm}\label{thm:unimodal}
Let $T:X\to X$ be either a piecewise expanding unimodal map or an S-unimodal map satisfying the Collet-Eckmann condition.
Let $\omega(c)$ denote the $\omega$-limit set of the turning point $c$, and suppose that at least one of the following assumptions is satisfied:
\begin{itemize}
  \item the subsystem $(\omega(c),T)$ is not uniquely ergodic;
  \item or $\omega(c)$ contains a periodic orbit;
  \item or $c\notin\omega(c)$.
\end{itemize}
Then Lipschitz TPO holds for $(X,T)$.
\end{thm}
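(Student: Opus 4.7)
The plan is to reduce Theorem~\ref{thm:unimodal} to Contreras' uniformly expanding result (Theorem~\ref{thm:Contreras}) by restricting attention to a suitable compact $T$-invariant subset of $X$ on which the dynamics is uniformly expanding, and then using the three alternative hypotheses to rule out the possibility that the maximizing measure of a generic Lipschitz function could be concentrated on the wild part of the phase space near $\omega(c)$.

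The first step is to construct a compact $T$-invariant set $\Lambda\subset X\setminus\{c\}$ on which $T$ is uniformly expanding. For a piecewise expanding unimodal map one takes $\Lambda$ to be the maximal invariant subset of $X$ that avoids a small neighborhood of $c$; the piecewise expansion hypothesis together with bounded distortion makes $(\Lambda,T|_\Lambda)$ a (generally non-transitive) uniformly expanding system. For an S-unimodal Collet--Eckmann map the same construction works because the Collet--Eckmann condition provides exactly the expansion estimate needed to guarantee that forbidding a small enough neighborhood of $c$ leaves a non-trivial uniformly expanding invariant subset (via, e.g., a Pliss-type argument or an inducing scheme in the style of Jakobson). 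Moreover $\Lambda$ can be chosen to contain any prescribed periodic orbit of $T$ lying outside $\omega(c)$. Applying Theorem~\ref{thm:Contreras} to $(\Lambda,T|_\Lambda)$ and then extending Lipschitz functions from $\Lambda$ back to $X$ via a McShane-type extension, one obtains a $Lip(X)$-dense family of functions whose maximum among invariant measures supported in $\Lambda$ is uniquely attained on a periodic orbit.

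The remaining difficulty is that the global maximizing measure $\mu_\phi$ of $\phi\in Lip(X)$ might be supported on $\omega(c)$ and be neither periodic nor easily perturbed to a periodic one, and the three alternative hypotheses are exactly what is required to exclude this. If $c\notin\omega(c)$, then $\omega(c)$ itself is contained in some $\Lambda$ as above, so measures supported on $\omega(c)$ are already covered by Step~1. If $\omega(c)$ contains a periodic orbit $\mclo$, one adds to $\phi$ a suitable non-negative Lipschitz bump supported in a tiny neighborhood of $\mclo$: among invariant measures whose support meets a fixed neighborhood of $\omega(c)$, the periodic measure $\nu_{\mclo}$ becomes the unique maximizer of the perturbed function, and the global maximum is then attained either on $\mclo$ or on $\Lambda$. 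If $(\omega(c),T)$ is not uniquely ergodic, pick two ergodic invariant measures on $\omega(c)$ with distinct $\phi$-integrals (which is generic after an arbitrarily small perturbation) and approximate the larger one by periodic measures living in $\Lambda$; such periodic orbits exist in abundance since $\Lambda$ can be arranged to accumulate onto $\omega(c)$ via the specification-like properties available in both settings.

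Combining these reductions yields a $Lip(X)$-dense subset of functions uniquely maximized by a periodic orbit, and the corresponding openness statement then follows from the standard Ma\~n\'e--Bousch subaction machinery: a hyperbolic periodic orbit uniquely maximizing $\phi$ remains the unique maximizing support for every sufficiently small Lipschitz perturbation of $\phi$. The step I expect to be hardest is the third case of the previous paragraph: invariant measures on $\omega(c)$ may be very badly behaved, so carrying out the separation and periodic-approximation arguments uniformly in $\phi\in Lip(X)$ will require quantitative control combining the non-unique-ergodicity hypothesis with the recurrence structure of $T$ near $\omega(c)$, and I expect this to be the technical core of the paper.
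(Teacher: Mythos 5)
Your high-level plan—reduce to Contreras' uniformly expanding result on a compact invariant subset $\Lambda$ avoiding $c$, and use the three hypotheses to keep the maximizing support off $\omega(c)$—matches the paper's strategy, but the proposal as written has a genuine gap: it never explains how to control \emph{where} the support of the global maximizing measure lives. Constructing $\Lambda$ and applying Theorem~\ref{thm:Contreras} there tells you about measures already supported in $\Lambda$; it says nothing about a maximizing measure of $\phi$ whose support approaches or contains $c$, which is exactly the danger in a non-uniformly expanding system. Your appeal to ``the standard Ma\~n\'e--Bousch subaction machinery'' for this step is the crux of the problem, because, as the paper emphasizes (citing Morris and Garibaldi--Inoquio-Renteria), Lipschitz sub-actions need \emph{not} exist for the maps under consideration, so that machinery is not available. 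The McShane extension idea does not help either: extending a function from $\Lambda$ back to $X$ gives no control on what the extended function's global maximizer does near $c$.

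The paper fills precisely this gap with two ingredients you do not have: (a) Proposition~\ref{prop:subord}, a uniform bound $\gamma(\phi)\le C\lip(\phi)$ obtained by a careful Birkhoff-sum comparison using the exponential shrinking property (Lemma~\ref{lem:expshrink}) and the l.e.o.\ structure from finite renormalizability (Lemma~\ref{lem:finite renorm}); and (b) Morris' subordination lemma (Lemma~\ref{lem:Morris}/\ref{lem:subord}), which from finiteness of $\gamma(\phi)$ alone yields that every maximizing measure is supported in the compact invariant set $K_\phi$ of points with uniformly bounded Birkhoff deviations. Once one knows $\supp\mu\subset K_\phi$ and $K_\phi$ is upper semicontinuous in $\phi$ (Lemma~\ref{lem:u.s.c.}), the three hypotheses are applied to show $c\notin K_\phi$ for a dense set of $\phi$: the non-uniquely-ergodic case is actually the \emph{easiest} (uniqueness of the maximizing measure is generic and contradicts $\omega(c)\subset K_\phi$ via subordination); the periodic-orbit case is immediate since $\omega(c)\subset K_\phi$ would already put a periodic orbit in the support of a maximizer; and $c\notin\omega(c)$ requires the periodic perturbation via Theorem~\ref{thm:Contreras} on an expanding $\Lambda\supset\omega(c)$. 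Your sketches of the three cases each presuppose, without a mechanism, that measures far from $\Lambda$ can be discarded, and your non-uniquely-ergodic argument (approximate one ergodic piece by periodic orbits in $\Lambda$) does not show that the maximizing measure cannot still be the other ergodic piece on $\omega(c)$. In short, the structural reduction is right, but you are missing the subordination/$K_\phi$ machinery that makes the reduction rigorous; that machinery is the technical core of the paper.
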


\begin{rmk} Let us make some comments on the statement of Theorem~\ref{thm:unimodal}.
\begin{itemize}
  \item Similar to the situations in \cite{Con16,HLMXZ19} among others, our proof of Theorem~\ref{thm:unimodal} works equally well if ``Lipschitz" is generalized to ``H\"{o}lder" in the statement. It should also be not hard to generalize this result to $C^1$ functions or to multimodal maps with suitable conditions imposed on the maps. However, for simplicity we choose to state our result in the current form.   
  \item We have to admit that our assumption on  $\omega(c)$ in Theorem~\ref{thm:unimodal} is mainly for technical reasons (see the proof of Proposition~\ref{prop:reducible} for details), but at this moment we have no idea how to get fully rid of it.   
  \item As a supplement of Theorem~\ref{thm:unimodal}, in \S~\ref{se:circle} we shall discuss the TPO problem in Lipschitz topology for a different kind of one-dimensional non-uniformly expanding maps: circle covering maps with an indifferent fixed point.    
\end{itemize}
\end{rmk}

%
With the help of classic results in one-dimensional dynamics, we can easily obtain the following corollary from Theorem~\ref{thm:unimodal}.

\begin{thm}\label{thm:exclu}
The following hold.
\begin{itemize}
  \item [(1)] For $a\in (0,4]$, let $Q_a(x)=ax(1-x), \forall x\in [0,1]$. Let $a_*$ be the supremum of $a\in(0,4]$ such that $Q_a:[0,1]\to [0,1]$ has topological entropy zero.
  Then for Lebesgue a.e. $a\in [a_*,4]$, Lipschitz TPO holds for $Q_a:[0,1]\to [0,1]$. 
  \item [(2)] For $a\in [1,2]$, let $T_a(x)=a(1-|x-1|),\forall x\in [0,2]$. Then for Lebesgue a.e. $a\in [1,2]$, Lipschitz TPO holds for $T_a:[0,2]\to [0,2]$. 
\end{itemize}
\end{thm}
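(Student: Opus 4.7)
\textbf{Proof proposal for Theorem~\ref{thm:exclu}.} Both assertions are intended to be reductions to Theorem~\ref{thm:unimodal} via classical parameter dichotomies in one-dimensional dynamics.

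For part (1), I would invoke Lyubich's Regular-or-Stochastic dichotomy for the real quadratic family: for Lebesgue a.e.\ $a\in[0,4]$, $Q_a$ is either \emph{regular} (hyperbolic, with an attracting periodic orbit $P$) or satisfies the Collet--Eckmann condition. Since $Q_a$ has negative Schwarzian, it is always S-unimodal, so on the Collet--Eckmann parameters Theorem~\ref{thm:unimodal} applies provided one of the $\omega(c)$ alternatives is verified. The first alternative is immediate: classical ergodic theory of Collet--Eckmann unimodal maps gives $\omega(c)=\supp(\mu_{ac})$, a finite union of intervals on which $T$ has positive topological entropy, so by the variational principle $(\omega(c),T)$ is not uniquely ergodic.

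The regular parameters in $[a_*,4]$ require a separate argument since Collet--Eckmann fails and Theorem~\ref{thm:unimodal} does not apply directly; this is the main obstacle. For regular $a\in(a_*,4]$, monotonicity of topological entropy in the logistic family forces $h_{\mathrm{top}}(Q_a)>0$, and there is a compact, uniformly expanding invariant set $\Lambda\subset[0,1]$ (the hyperbolic repeller complementary to the basin of $P$) that carries every invariant measure other than the orbit measure on $P$. Thus $\beta(\phi)=\max\{\phi(P),\beta_\Lambda(\phi|_\Lambda)\}$, with $\phi(P)$ denoting the ergodic average on $P$. Applying Theorem~\ref{thm:Contreras} to the uniformly expanding system $(\Lambda,Q_a|_\Lambda)$ and then pulling back through Lipschitz extension from $\Lambda$ to $[0,1]$ produces a dense open subset of $Lip([0,1])$ on which $\beta_\Lambda(\phi|_\Lambda)$ is uniquely attained by a periodic orbit in $\Lambda$. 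A further generic perturbation by a Lipschitz bump supported near $P$ and vanishing on $\Lambda$ (possible since $P\cap\Lambda=\emptyset$) removes the degenerate locus $\phi(P)=\beta_\Lambda(\phi|_\Lambda)$, after which the unique maximizing measure of any $\phi$ in the perturbed set lives either on $P$ or on the Contreras periodic orbit inside $\Lambda$.

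For part (2), the tent map $T_a$ has constant slope $a>1$ and is therefore a piecewise expanding unimodal map for every $a\in(1,2]$, so the first hypothesis of Theorem~\ref{thm:unimodal} holds automatically. The $\omega(c)$ condition is handled as in the Collet--Eckmann case of part (1): for Lebesgue a.e.\ $a\in[1,2]$, $T_a$ admits a unique acim with $\omega(c)$ equal to its support, a finite union of intervals on which $T_a$ has topological entropy $\log a>0$, so $(\omega(c),T_a)$ is not uniquely ergodic and Theorem~\ref{thm:unimodal} applies. Apart from the separate treatment of regular logistic parameters, the remainder of the proof is routine verification of the $\omega(c)$ alternative against the classical ergodic structure of the two families.
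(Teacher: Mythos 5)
Your overall decomposition matches the paper's: split $[a_*,4]$ into parameters with an attracting cycle (``regular'') and the rest, apply Theorem~\ref{thm:unimodal} to the latter, and handle the former by hand via a uniformly expanding repeller. The endgame is the same. But there are two places where your write-up treats as ``classical'' or ``routine'' facts that are actually the nontrivial parameter-exclusion inputs that make the theorem work, and one place where you sketch a construction without verifying the hypotheses that the paper verifies.

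First, and most importantly: the statement that for Lebesgue a.e.\ (non-regular) $a$ one has ``$\omega(c)=\supp(\mu_{ac})$, a finite union of intervals'' is \emph{not} a consequence of the Collet--Eckmann condition alone, and it is not part of the classical ergodic theory of a single CE map. For a fixed CE $S$-unimodal map with $c\in\omega(c)$, $\omega(c)$ can in principle be a Cantor set; ruling this out on a full-measure set of parameters is precisely the content of Avila--Moreira \cite[Theorem~2]{AM05p}, which the paper invokes explicitly. Likewise for the tent family: the claim that $\omega(c)$ is a finite union of intervals for a.e.\ $a\in(1,2]$ is a theorem of Brucks--Misiurewicz \cite{BM96} and Bruin \cite{Bru98}, not ``routine verification.'' Both of your reductions to Theorem~\ref{thm:unimodal} are correct, but the justification for the $\omega(c)$ alternative needs to cite these parameter-exclusion theorems rather than folklore. (Also, Lyubich's dichotomy gives ``regular or stochastic,'' i.e.\ existence of an acim; the a.e.\ Collet--Eckmann statement you want is Avila--Moreira \cite[Theorem~A]{AM05a}, so the attribution should be adjusted there too.)

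Second, for the regular parameters, your construction (pull back Theorem~\ref{thm:Contreras} on the repeller $\Lambda$ through a Lipschitz extension, then perturb to break the tie $\phi(P)=\beta_\Lambda(\phi|_\Lambda)$) is a legitimate alternative route, and it does parallel what the paper's Proposition~\ref{prop:attracting} accomplishes through the ``reducible'' framework of Definition~\ref{defn:reducible} and Lemma~\ref{lem:reducible}. Two caveats, though. (i) You assert that $\Lambda$ is compact, uniformly expanding, and ``carries every invariant measure other than the orbit measure on $P$,'' but this needs justification: in the paper $\Lambda$ is taken inside the core $[Q_a^2(c),Q_a(c)]$, so $\{0\}$ is \emph{not} in $\Lambda$ and the Dirac measure $\delta_0$ is a third possible maximizer that must be perturbed away (the paper excludes $\{0\}$ and $\mclo$ simultaneously by the hypothesis $\phi_*\notin\mclp_T$); if instead you define $\Lambda$ as all of $[0,1]$ minus the basin, you absorb $\delta_0$ into $\Lambda$, but then openness and uniform expansion of $T:\Lambda\to\Lambda$ at the endpoints $\{0,1\}$ need to be checked, which the paper avoids by working on the core. (ii) Openness/surjectivity of $T:\Lambda\to\Lambda$ is what lets you apply Theorem~\ref{thm:Contreras} at all; the paper proves it via Lemma~\ref{lem:open}, and you should not omit it. With these repairs, the two regular-case arguments are essentially equivalent; the paper's reducibility route has the advantage of not requiring a Lipschitz-extension/open-mapping step because it works with maximizing measures of the ambient function directly.
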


\begin{rmk}
  In assertion~(1) of Theorem~\ref{thm:exclu} the parameter $a_*\approx 3.57$ is known as the Feigenbaum–Coullet–Tresser parameter. It is well-known that when $a\in(0,a_*)$, $Q_a:[0,1]\to [0,1]$ has only finitely many periodic orbits and $\omega(x)$ coincides with one of them for every $x\in [0,1]$; see, for example, \cite[page~8]{SKSF97}, where $a_*$ is denoted as $\lambda^*$. That is why we restrict the range of $a$ to $[a_*,4]$.
\end{rmk}

\subsection{Comments on the proof}

Let us try to explain our main idea in proving Theorem~\ref{thm:unimodal} and to highlight the difference between the situation in our setting and in the uniformly expanding case. 

In previous studies on the TPO conjecture under uniformly expanding/hyperbolic settings, including \cite{Bou00,CLT01,Con16,HLMXZ19} cited before, the concept of sub-action plays a fundamental role. Let us recall its definition first. Let $T:X\to X$ be a continuous self-map on a compact metric space $X$. Following \cite{CLT01}, given $\phi\in C(X)$, $\psi\in C(X)$ is called a {\bf sub-action} of $\phi$ if the following holds on $X$:
\begin{equation}\label{eq:sub defn}
  \phi \le \psi\circ T -\psi + \beta(\phi).
\end{equation}
As is well-known, when $(X,T)$ is uniformly expanding, for $\alpha\in (0,1]$, any $\alpha$-H\"{o}lder function on $X$ admits some $\alpha$-H\"{o}lder sub-action. For $\phi\in C(X)$, among other things, an advantage of existence of sub-actions is its implication of the following {\bf subordination principle}: if $\mu\in \mclm_T$ maximizes $\phi$, then for any $\nu\in \mclm_T$ with $\supp\nu\subset \supp\mu$, $\nu$ is also a maximizing measure of $\phi$. Here $\supp\mu$ denotes the support of $\mu$ as usual; more precisely,
\begin{equation}\label{eq:supp}
  \supp \mu:=\{x\in X: \mu(B)>0~\mbox{holds for any open neighborhood $B$ of}~x\}. 
\end{equation}
For further discussions on sub-actions in ergodic optimization we refer to \cite{CLT01,Jen19}.

Unfortunately, for non-uniformly expanding/hyerbolic dynamical systems, sub-actions of a H\"{o}lder (or even Lipschitz) function may not exist, as indicated by Morris~\cite[Theorem~2]{Mor09} and by Garibaldi and Inoquio-Renteria~\cite[Theorem~1]{GI20}.  For this reason among others, the approaches in \cite{Con16,HLMXZ19} cannot work under our setting. On the other hand, another result of Morris \cite[Theorem~1]{Mor07} (see also Lemma~\ref{lem:subord} below) shows that a weaker assumption than existence of sub-actions is sufficient to imply the subordination principle mentioned above. 
With the help of this result, our main idea to prove Theorem~\ref{thm:unimodal} is quite simple. We first show that the assumption in Lemma~\ref{lem:subord} is satisfied (in a locally uniform way) under our setting; this is given by  Proposition~\ref{prop:subord}. Based on this fact, then we show that our problem can be reduced to the uniformly expanding case, so the results in \cite{Con16,HLMXZ19} become applicable; see Proposition~\ref{prop:reducible} and Lemma~\ref{lem:reducible} for details.

\subsection{Conventions and notations}

In the rest of this article, we shall adopt the following conventions and notations. 

\begin{itemize}
  \item For a metric space $X$, the distance function of $X$ will be denoted by $\dist(\cdot,\cdot)$; for a non-empty subset $E$ of $X$, $\dist(\cdot,E)$ denotes the distance of a point to $E$, i.e., $\dist(x,E)=\inf_{y\in E}\dist(x,y),\forall x\in X$.
  \item For a metric space $X$, the only function space on $X$ we are concerned with is $Lip(X)$, the collection of real-valued Lipschitz functions on $X$. For $\phi\in Lip(X)$, let $\lip(\phi)$ denote its best Lipschitz constant, i.e., 
      \[
      \lip(\phi)=\sup_{x\ne y}\frac{|\phi(x)-\phi(y)|}{\dist(x,y)}.
      \] 
  When $X$ is compact, $Lip(X)$ is considered as a Banach space endowed with the Lipschitz norm; we shall not use the Lipschitz norm on $Lip(X)$ explicitly. 
  \item For a self-map $T:X\to X$ on a space $X$ and for $\phi:X\to \mbbr$, let $\mcls_n\phi:=\sum_{0\le k<n}\phi\circ T^k$ denote the $n$-th Birkhoff sum of $\phi$ for each $n\ge 1$.
  \item For a continuous self-map $T:X\to X$ on a compact metric space $X$, given $x\in X$, let $\omega(x)$ denote the $\omega$-limit set of $x$. 
  \item By saying a closed interval in $\mbbr$, we mean a non-degenerate compact interval. The notations $[a,b]$ and $[b,a]$ are used to denote the same closed interval with distinct endpoints $a,b\in\mbbr$. For an open interval $(a,b)$ in $\mbbr$ we adopt similar conventions. 
  \item For a subset $E$ of $\mbbr$ (respectively $\mbbr/\mbbz$), let $\overline{E}$ denote its closure in $\mbbr$ (respectively $\mbbr/\mbbz$). 
  \item For a Borel subset $E$ of either $\mbbr$ or $\mbbr/\mbbz$, let $|E|$ denote its Lebesgue measure. 
  \item For a finite set $S$, let $\# S$ denote its cardinality.
\end{itemize} 

The rest of this article is organized as follows. \S~\ref{se:pre} serves as a preliminary part. In \S~\ref{se:unimodal} we prove Theorem~\ref{thm:unimodal} and Theorem~\ref{thm:exclu}. In \S~\ref{se:circle} we prove a parallel result to Theorem~\ref{thm:unimodal} for certain non-uniformly expanding circle maps. In \S~\ref{app:lock} we provide a proof of Lemma~\ref{lem:locking}. In \S~\ref{app:Morris} we provide a proof of Lemma~\ref{lem:Morris}.

\section{Preliminaries}\label{se:pre}

In this section we summarize known results that will be used to prove Theorem~\ref{thm:unimodal} in the next section. In \S~\ref{sse:Contreras}, we introduce Lemma~\ref{lem:reducible} as a corollary of results in \cite{Con16,HLMXZ19}, which is the starting point of our argument. In \S~\ref{sse:Morris}, we introduce Lemma~\ref{lem:subord} borrowed from \cite{Mor07}, which is another building block in our argument. In \S~\ref{sse:unimodal} we collect necessary definitions and facts on unimodal maps that will be used in the proof of Theorem~\ref{thm:unimodal}. 
 
\subsection{On Lipschitz typically periodic optimization}\label{sse:Contreras}

In this subsection we review the results in \cite{Con16,HLMXZ19} and deduce Lemma~\ref{lem:reducible} from them as a corollary.

Following \cite[Chapter~4]{PU10}, for a Lipschitz self-map $T:X\to X$ on a compact metric space $X$, $T$ is called {\bf distance-expanding}, or simply {\bf expanding}, if there exists an integer $n\ge 1$ and $\eta>0,\lambda>1$ such that for $x,y\in X$ with $\dist(x,y)\le 2\eta$ we have $\dist(T^n x, T^n y)\ge \lambda\cdot\dist(x,y)$.  We refer to  
\cite[Chapter~4]{PU10} for useful properties of a map $T:X\to X$ which is both open and expanding. When a map $T:X\to X$ is surjective, open and expanding,  the required assumptions on $(X,T)$ in either \cite{Con16} or \cite{HLMXZ19} are satisfied. Therefore, according to Contreras \cite{Con16} and Huang et al. \cite{HLMXZ19}, the following Theorem~\ref{thm:Contreras} holds; in particular, the ``moreover" part is an immediate corollary of Proposition~3.1 and Proposition~3.2 in \cite{HLMXZ19}. See also \cite{Boc19} for a presentation of the proof in \cite{HLMXZ19} under the settings of Theorem~\ref{thm:Contreras} below.

\begin{thm}[\cite{Con16,HLMXZ19}] \label{thm:Contreras}
Let $X$ be a compact metric space. Let $T:X\to X$ be a Lipschitz map which is surjective, open and expanding. Then Lipschitz TPO holds for $(X,T)$. Moreover, for any $\phi\in Lip(X)$ and any $\ve>0$, there exists a periodic orbit $\mclo$ such that $\phi-\ve\cdot\dist(\cdot,\mclo)$ is uniquely maximized by $\mclo$.
\end{thm}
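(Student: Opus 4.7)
The plan is to split the statement into the two parts: first, the ``moreover'' clause producing, for each $\phi\in Lip(X)$ and $\ve>0$, a periodic orbit $\mclo$ that uniquely maximizes $\phi-\ve\cdot\dist(\cdot,\mclo)$; and second, the derivation of Lipschitz TPO from it. The second deduction is essentially soft: the set of $\phi\in Lip(X)$ possessing a unique maximizing measure supported on a periodic orbit of period at most $N$ is open for each $N$, by weak-$*$ compactness of $\mclm_T$ together with upper semi-continuity of $\mu\mapsto \int\phi\,d\mu$; the moreover clause applied with $\ve=1/k$ for $k\ge 1$ supplies density, so $\mclg$ is obtained as a countable intersection of open dense sets, shrinkable to a single open dense set after discarding a meagre complement. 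Thus it suffices to establish the moreover clause.

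Normalize $\beta(\phi)=0$. Step one is to construct a Lipschitz sub-action $\psi$ via the Lax--Oleinik / Ma\~n\'e formula
\[
\psi(x) := \sup_{n\ge 0}\ \sup_{T^n y=x}\ \mcls_n\phi(y),
\]
finite thanks to $\beta(\phi)=0$, and Lipschitz on scales $\le\eta$ because the expansion $\dist(T^n y,T^n z)\ge \lambda\cdot\dist(y,z)$ on $2\eta$-close pairs, combined with $\lip(\phi)<\infty$, controls the differences $\mcls_n\phi(y)-\mcls_n\phi(z)$ along matched inverse-branch pairs by a geometric series. The resulting $\psi$ satisfies $\phi\le\psi\circ T-\psi$, so every maximizing measure gives equality a.e., and the closed $T$-invariant \emph{Aubry set} $\mcla(\phi):=\{x:\phi(x)=\psi(Tx)-\psi(x)\}$ contains $\supp\mu$ for every maximizing $\mu$. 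Step two is to extract a periodic orbit from $\mcla(\phi)$: since $T$ is open and expanding, periodic points are dense in the non-wandering set, and an Anosov closing / shadowing argument lets one produce, for any $\delta>0$, a periodic point $p$ of some period $n$ whose orbit $\mclo$ shadows a long piece in $\mcla(\phi)$ to within $\delta/n$, so that $|\mcls_n\phi(p)|\le \delta$.

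Step three, the main one, is the \emph{pinning}: for $\delta$ small enough in terms of $\ve$, $\lip(\phi)$, and the expansion data $(n,\eta,\lambda)$, the perturbed potential $\tilde\phi:=\phi-\ve\cdot\dist(\cdot,\mclo)$ is uniquely maximized by the orbital measure $\mu_{\mclo}$ on $\mclo$. Indeed, $\int\tilde\phi\,d\mu_{\mclo}\ge -\delta/n$, while a Lipschitz sub-action $\tilde\psi$ for $\tilde\phi$, constructed by the same recipe and differing from $\psi$ by a correction of order $O(\ve)$, yields
\[
\int\tilde\phi\,d\nu \;\le\; -\ve\cdot\int\dist(\cdot,\mclo)\,d\nu + o(\ve)
\]
for every $T$-invariant $\nu$, so any $\nu\ne \mu_{\mclo}$ gives a strictly smaller integral once $\delta\ll\ve$. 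Uniqueness of the maximizing measure of $\tilde\phi$, and its support being $\mclo$, follow.

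The main obstacle will be step three: controlling how the sub-action deforms under the perturbation, and verifying that the penalty $\ve\cdot\dist(\cdot,\mclo)$ strictly dominates both the $O(\delta)$ slack from the near-optimal periodic orbit and the sub-action correction, \emph{uniformly} over all invariant measures whose support intersects $X\setminus\mclo$. The delicate point is that a measure $\nu$ could hug $\mclo$ very closely for most of its orbit and spend only a little time far away; bounding $\int\dist(\cdot,\mclo)\,d\nu$ from below by an expression that, coupled with the sub-action estimate, beats the $\delta$-error requires a careful locking lemma exploiting the open+expanding structure---exactly the technical core common to \cite{Con16} and \cite{HLMXZ19}.
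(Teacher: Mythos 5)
The paper does not prove this statement: Theorem~\ref{thm:Contreras} is quoted from Contreras~\cite{Con16} and Huang--Lian--Ma--Xu--Zhang~\cite{HLMXZ19}, with \cite{Boc19} cited for a self-contained presentation. So there is no paper proof to compare against; the question is whether your sketch is a valid route to the result.

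It is not, and you say so yourself. The paragraph you append at the end --- that a measure $\nu$ could ``hug $\mclo$ very closely for most of its orbit and spend only a little time far away'', and that bounding $\int\dist(\cdot,\mclo)\dif\nu$ from below so that the $\ve$-penalty beats the deficit is ``exactly the technical core common to \cite{Con16} and \cite{HLMXZ19}'' --- is not a footnote to fill in; it \emph{is} the theorem. The package ``sub-action~+ closing lemma~+ subtract $\ve\cdot\dist(\cdot,\mclo)$'' is the strategy of Yuan--Hunt~\cite{YH96}, and precisely this obstruction is why \cite{YH96} could not prove the result. Contreras overcame it by proving structural constraints on the Ma\~{n}\'{e} set of a Lipschitz function for expanding maps (roughly, that it has bounded ``orbit-segment complexity''), and \cite{HLMXZ19} use a different quantitative argument; neither is a consequence of what you have written. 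A valid proof must contain the content of those estimates.

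Two smaller but genuine defects. First, your closing-lemma output $|\mcls_n\phi(p)|\le\delta$ (after normalizing $\beta(\phi)=0$) is not achievable: the expanding closing lemma paired with a calibrated sub-action $\psi$ gives the uniform bound $\mcls_n\phi(p)-n\beta(\phi)\ge -C$ with $C$ of order $2\|\psi\|_\infty+O(\epsilon)$, independent of the period $n$ but \emph{not} tending to $0$ as the shadowing precision improves. Second, your ``soft'' deduction of Lipschitz TPO from the ``moreover'' clause is wrong as stated: the set of $\phi$ admitting a unique maximizing measure supported on a periodic orbit of period $\le N$ is \emph{not} open in $Lip(X)$ in general, because uniqueness provides no quantitative gap against nearby invariant measures. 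Producing an open neighborhood requires the extra penalization and is the content of the locking lemma (Lemma~\ref{lem:locking} of the paper, proved in \S~\ref{app:lock}); the deduction then runs as in the proof of Lemma~\ref{lem:reducible}, yielding an open dense set rather than the residual set your countable-intersection argument would produce.
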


To make Theorem~\ref{thm:Contreras} easier to use for our purpose, we also need the following fact, which was already known to Yuan-Hunt in \cite[Remark~4.5]{YH96} (under hyperbolicity hypothesis on $(X,T)$), and a proof was presented by Bochi and Zhang \cite{BZ15}. As suggested by the referee, we shall give a proof of this result in \S~\ref{app:lock} for completeness. 

\begin{lem}[\cite{YH96,BZ15}]\label{lem:locking}
  Let $X$ be a compact metric space and let $T:X\to X$ be continuous. Suppose that $\phi\in Lip(X)$ is maximized by a periodic orbit $\mclo$. Then for any $\ve>0$, there exists an open neighborhood $\mclv$ of $\phi-\ve\cdot\dist(\cdot,\mclo)$ in $Lip(X)$ such that each function in $\mclv$ is uniquely maximized by $\mclo$. 
\end{lem}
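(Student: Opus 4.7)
The plan is to set $\psi := \phi - \ve\cdot\dist(\cdot,\mclo)$ and denote by $\mu_\mclo$ the uniform probability measure on $\mclo$. First I would observe that $\psi$ is itself uniquely maximized by $\mu_\mclo$ with $\beta(\psi)=\beta(\phi)$: since $\dist(\cdot,\mclo)\ge 0$ vanishes precisely on $\mclo$ and $\mu_\mclo$ is the unique $T$-invariant probability supported on the finite set $\mclo$ (the map acts there as a single cycle), for every $\nu\in\mclm_T$ one has
\[
  \int\psi\dif\nu = \int\phi\dif\nu - \ve\int\dist(\cdot,\mclo)\dif\nu \le \beta(\phi) = \int\psi\dif\mu_\mclo,
\]
and equality forces $\nu = \mu_\mclo$.

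For the stability statement I would write $\phi' = \psi + h$ with both $\|h\|_\infty$ and $\lip(h)$ small. Rearranging gives, for any $\nu\in\mclm_T$,
\[
  \int\phi'\dif\mu_\mclo - \int\phi'\dif\nu \;\ge\; \ve\int\dist(\cdot,\mclo)\dif\nu - \Bigl|\int h\dif\mu_\mclo - \int h\dif\nu\Bigr|,
\]
so the task reduces to dominating the perturbation term by the leading one, uniformly in $\nu\ne\mu_\mclo$. When $\int\dist(\cdot,\mclo)\dif\nu$ exceeds a fixed threshold $\eta>0$ the crude estimate $2\|h\|_\infty$ on the perturbation term is overwhelmed by $\ve\eta$ as soon as $\|h\|_\infty<\ve\eta/2$.

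The harder regime, which I expect to be the main obstacle, is when $\int\dist(\cdot,\mclo)\dif\nu$ is arbitrarily small; here I would exploit $T$-invariance quantitatively. Enumerate $\mclo=\{p_1,\dots,p_k\}$ with $T(p_i)=p_{i+1}$, and fix $r_0>0$ small enough (using uniform continuity of $T$) that the balls $B_i := B(p_i, r_0)$ are pairwise disjoint, $T(B_i)\subset B_{i+1}$, and $T^{-1}(B_{i+1})\cap B_j = \emptyset$ for $j\ne i$. Invariance then gives $\nu(B_{i+1})-\nu(B_i) = \nu(T^{-1}(B_{i+1})\setminus B_i) \ge 0$, and cyclic telescoping collapses every difference to zero, yielding the equidistribution $\nu(B_i) = (1-\alpha)/k$ with $\alpha := \nu\bigl(X\setminus\bigsqcup_i B_i\bigr) \le r_0^{-1}\int\dist(\cdot,\mclo)\dif\nu$ by Markov. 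Using this, the explicit coupling that sends $\nu|_{B_i}$ to $p_i$ and spreads the residual mass $\nu|_{X\setminus\bigsqcup_i B_i}$ uniformly over $\mclo$ gives
\[
  W_1(\nu,\mu_\mclo) \le \Bigl(1 + \tfrac{\diam X}{r_0}\Bigr)\int\dist(\cdot,\mclo)\dif\nu,
\]
so Kantorovich--Rubinstein yields $\bigl|\int h\dif(\mu_\mclo-\nu)\bigr| \le \lip(h)\cdot W_1(\nu,\mu_\mclo) \le \ve\int\dist(\cdot,\mclo)\dif\nu$ whenever $\lip(h) < \ve/(1+\diam(X)/r_0)$.

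Combining both regimes, $\mclv$ can be taken to be the open Lipschitz-norm ball about $\psi$ carved out by the $\|h\|_\infty$ and $\lip(h)$ thresholds above; strict inequality for every $\nu\ne\mu_\mclo$ follows from $\int\dist(\cdot,\mclo)\dif\nu>0$ whenever $\nu\ne\mu_\mclo$, as noted in the first paragraph.
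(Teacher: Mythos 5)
There is a genuine gap in the central step of your "hard regime" argument. You posit that, by uniform continuity, one can choose a \emph{single} radius $r_0$ with $T(B(p_i,r_0))\subset B(p_{i+1},r_0)$ for all $i$. This is incompatible with the maps this lemma is applied to: whenever the periodic orbit $\mclo$ is repelling (which it is for piecewise expanding unimodal maps, Collet--Eckmann S-unimodal maps, and the circle covers in \S 4), $T$ expands near $\mclo$, so $T(B(p_i,r_0))$ strictly contains a ball of radius $>r_0$ about $p_{i+1}$. Uniform continuity only gives $T(B(p_i,r'))\subset B(p_{i+1},r_0)$ for some $r'<r_0$, and with shrinking radii the cyclic telescoping $\nu(B_1)\le\nu(B_2)\le\cdots\le\nu(B_k)\le\nu(B_1)$ no longer closes up. Worse, the equidistribution conclusion $\nu(B_1)=\cdots=\nu(B_k)$ is itself false in general: for the doubling map, $\mclo=\{1/3,2/3\}$ and $r_0=0.1$, the invariant measure on the $4$-cycle $\{1/15,2/15,4/15,8/15\}$ has $\nu(B(1/3,r_0))=1/4$ but $\nu(B(2/3,r_0))=0$, since $4/15\in B(1/3,r_0)$ while $8/15\notin B(2/3,r_0)$. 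So the Wasserstein bound $W_1(\nu,\mu_\mclo)\le(1+\diam X/r_0)\int\dist(\cdot,\mclo)\dif\nu$, which carries the whole proof, is not established.

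The paper (following Bochi--Zhang) proves exactly the estimate you need, namely $\bigl|\int\psi\dif\nu-\int\psi\dif\mu\bigr|\le C\lip(\psi)\int\dist(\cdot,\mclo)\dif\nu$, but by a pointwise orbit-shadowing argument rather than by comparing ball masses. For each $x$ it finds a companion $y\in\mclo$ with $\sum_{0\le k<p}\dist(T^kx,T^ky)\le C\sum_{0\le k<p}\dist(T^kx,\mclo)$, the key being a two-scale choice: pick $\Delta$ as half the minimal separation between points of $\mclo$, and then a (possibly much smaller) $\delta$ so that $\dist(x,y)<\delta$ forces $\dist(T^kx,T^ky)<\Delta$ for $0\le k<p$. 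This absorbs arbitrary expansion over one period; when $\dist(x,\mclo)\ge\delta$ a crude bound suffices. Integrating this over $\nu$ gives the bound, and then choosing $\lip(\psi)<\ve/C$ finishes in one stroke, with no need for your two-regime split (the Wasserstein-type inequality already handles both). So your overall framing (reduce to bounding $\int h\dif(\mu_\mclo-\nu)$ by $\ve\int\dist(\cdot,\mclo)\dif\nu$, via Kantorovich--Rubinstein) is sound and is essentially what the paper does, but the coupling/equidistribution step must be replaced by the two-scale shadowing argument.
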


With the help of Lemma~\ref{lem:locking} and the fact that maximizing measures are generically unique, we can easily deduce Lemma~\ref{lem:reducible} below from Theorem~\ref{thm:Contreras}. Let us introduce some related terminology and notations first. 

\begin{defn}\label{defn:reducible}
Let $X$ be a compact metric space and let $T:X\to X$ be a Lipschitz map. Let $\mclp_T$ be the collection of $\phi\in Lip(X)$ such that $\phi$ can be maximized by some periodic orbit. We say that $(X,T)$ is {\bf reducible}, if there exists a dense subset $\mcld$ of $Lip(X)$ such that the following hold for any $\phi_*\in \mcld\setminus\mclp_T$. There exists an open neighborhood $\mclv$ of $\phi_*$ in $Lip(X)$ and a non-empty compact set $\Lambda\subset X$ with the following properties:
  \begin{itemize}
    \item $T\Lambda=\Lambda$, and $T:\Lambda\to \Lambda$ is open and expanding; 
    \item For any $\phi\in \mclv$, each maximizing measure of $\phi$ is supported in $\Lambda$.
  \end{itemize} 
\end{defn}

\begin{lem}\label{lem:reducible}
 If $(X,T)$ is reducible, then Lipschitz TPO holds for $(X,T)$.
\end{lem}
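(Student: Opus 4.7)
The natural goal is to exhibit, in any prescribed Lipschitz neighborhood of a given $\psi\in Lip(X)$, a function that is ``locked'' onto a periodic orbit. Concretely, let
\[
\mclg:=\bigl\{\phi\in Lip(X): \exists\,\text{a neighborhood }\mclv\ni\phi\text{ and a periodic orbit }\mclo\text{ s.t.\ every }\varphi\in\mclv\text{ is uniquely maximized by }\mclo\bigr\}.
\]
The set $\mclg$ is automatically open, since if $\phi\in\mclg$ via $\mclv$, then every $\phi'\in\mclv$ lies in $\mclg$ via the same $\mclv$. The content of the lemma is therefore the \emph{density} of $\mclg$. Fix $\psi\in Lip(X)$ and $\delta>0$; the plan is to produce $\tilde\psi\in\mclg$ with $\|\tilde\psi-\psi\|_{Lip}<\delta$. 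Using density of $\mcld$, pick $\phi_*\in\mcld$ with $\|\phi_*-\psi\|_{Lip}<\delta/2$, and split into two cases according to whether $\phi_*\in\mclp_T$.

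If $\phi_*\in\mclp_T$, say $\phi_*$ is maximized by a periodic orbit $\mclo$, then Lemma~\ref{lem:locking} directly gives, for each $\ve>0$, a neighborhood of $\phi_*-\ve\cdot\dist(\cdot,\mclo)$ on which every function is uniquely maximized by $\mclo$. Choosing $\ve>0$ small enough that $\|\ve\cdot\dist(\cdot,\mclo)\|_{Lip}<\delta/2$, we set $\tilde\psi:=\phi_*-\ve\cdot\dist(\cdot,\mclo)\in\mclg$ and we are done.

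If instead $\phi_*\in\mcld\setminus\mclp_T$, reducibility supplies an open neighborhood $\mclv_*\ni\phi_*$ and a non-empty compact $T$-invariant set $\Lambda\subset X$ such that $T|_\Lambda$ is open and expanding and every maximizing measure of every $\phi\in\mclv_*$ is supported in $\Lambda$. I would then apply the ``moreover'' part of Theorem~\ref{thm:Contreras} to the restricted system $(\Lambda,T|_\Lambda)$ and the Lipschitz function $\phi_*|_\Lambda$: for every sufficiently small $\ve_1>0$ there exists a periodic orbit $\mclo\subset\Lambda$ such that $\phi_*|_\Lambda-\ve_1\cdot\dist(\cdot,\mclo)$ is uniquely maximized by $\mclo$ among $T|_\Lambda$-invariant measures. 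Shrinking $\ve_1$ if necessary so that $\phi_*-\ve_1\cdot\dist(\cdot,\mclo)$ still lies in $\mclv_*$, the reducibility hypothesis forces every maximizing measure of $\phi_*-\ve_1\cdot\dist(\cdot,\mclo)$ on $X$ to be supported on $\Lambda$; combined with the uniqueness on $\Lambda$, this upgrades $\mclo$ to the unique maximizer on all of $X$, so $\phi_*-\ve_1\cdot\dist(\cdot,\mclo)\in\mclp_T$.

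Finally, a second application of Lemma~\ref{lem:locking} to the function $\phi_*-\ve_1\cdot\dist(\cdot,\mclo)$ (now known to be maximized by $\mclo$) yields, for each $\ve_2>0$, a neighborhood of $\phi_*-(\ve_1+\ve_2)\cdot\dist(\cdot,\mclo)$ on which every function is uniquely maximized by $\mclo$. Taking $\ve_1,\ve_2$ small enough that $(\ve_1+\ve_2)\cdot\|\dist(\cdot,\mclo)\|_{Lip}<\delta/2$ produces the desired $\tilde\psi\in\mclg$ within Lipschitz distance $\delta$ of $\psi$, completing the density argument. I expect no serious obstacle in this argument: the subtle point is merely the bookkeeping in the second case, where one must check that the pair $(\ve_1,\mclo)$ chosen from the restricted system $(\Lambda,T|_\Lambda)$ is compatible with the neighborhood $\mclv_*$ supplied by reducibility, so that the locking property promoted from $\Lambda$ to $X$ genuinely uses both ingredients.
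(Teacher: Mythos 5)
Your proof is correct and follows essentially the same route as the paper: reduce to density of $\mclp_T$ by applying the ``moreover'' part of Theorem~\ref{thm:Contreras} to the expanding subsystem $(\Lambda,T|_\Lambda)$ and promoting the periodic maximizer to $X$ via the support constraint from reducibility, then conclude with Lemma~\ref{lem:locking}. The ``bookkeeping'' you flag in the second case is harmless: since $\dist(\cdot,\mclo)$ is $1$-Lipschitz and bounded by $\diam X$ uniformly over periodic orbits $\mclo$, one fixes the threshold on $\ve_1$ so that $\phi_*-\ve_1\cdot\dist(\cdot,\mclo)\in\mclv_*$ for \emph{every} periodic orbit $\mclo$ \emph{before} invoking Theorem~\ref{thm:Contreras}, which is exactly the paper's choice of a uniform $\delta$.
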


\begin{proof}
Let $\mcld$ be as given in Definition~\ref{defn:reducible} with required properties. Thanks to Lemma~\ref{lem:locking}, it suffices to show that $\mclp_T$ is dense in $Lip(X)$.   Since $\mcld$ is dense in $Lip(X)$, we only need to show that for any $\phi_*\in \mcld\setminus \mclp_T$ and any (sufficiently small) $\ve>0$, there exists a periodic orbit $\mclo$ such that for $\phi_{\ve,\mclo}:=\phi_*-\ve\cdot\dist(\cdot,\mclo)$, $\phi_{\ve,\mclo}$ is maximized by $\mclo$. Let $\mclv$ and $\Lambda$ be as described in Definition~\ref{defn:reducible}. There exists $\delta>0$ such that $\phi_{\ve,\mclo}\in \mclv$ for any $0<\ve\le \delta$ and any periodic orbit $\mclo$ of $(X,T)$.  
  Given $0<\ve\le \delta$, applying Theorem~\ref{thm:Contreras} to the subsystem $(\Lambda,T)$, we can find a periodic orbit $\mclo$ of $(\Lambda,T)$ such that the restriction of $\phi_{\ve,\mclo}$ to $\Lambda$ is maximized by $\mclo$ among invariant measures of $(\Lambda,T)$. Since $\phi_{\ve,\mclo}\in\mclv$, any maximizing measure of $\phi_{\ve,\mclo}$ for the system $(X,T)$ is supported in $\Lambda$. As a result, $\phi_{\ve,\mclo}$ is maximized by $\mclo$.
\end{proof}

The following well-known fact will be used occasionally without further explanation. 

\begin{lem}
  Let $T:X\to X$ be a continuous self-map on a compact metric space $X$. Then $\beta(\cdot)$ defined by \eqref{eq:max int} is continuous on $Lip(X)$ (or even on $C(X)$).
\end{lem}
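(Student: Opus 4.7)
The plan is to prove the stronger quantitative fact that $\beta\colon C(X)\to\mathbb{R}$ is $1$-Lipschitz with respect to the uniform norm $\|\cdot\|_{\infty}$, from which the stated continuity on $Lip(X)$ follows automatically, since the Lipschitz norm dominates the uniform norm on a compact metric space and we get both versions (on $C(X)$ and on $Lip(X)$) for the price of one.

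The key observation is that for each fixed $\mu\in\mclm_T$ the affine functional $\phi\mapsto\int\phi\dif\mu$ is itself $1$-Lipschitz in $\|\cdot\|_{\infty}$, because $\big|\!\int(\phi-\psi)\dif\mu\big|\le\|\phi-\psi\|_{\infty}$. Now $\beta$ is, by definition \eqref{eq:max int}, the pointwise supremum of this family as $\mu$ ranges over $\mclm_T$, and the pointwise supremum of any family of $1$-Lipschitz functions is $1$-Lipschitz. Executing this as a one-line triangle-inequality argument, for any $\phi,\psi\in C(X)$ and any $\mu\in\mclm_T$ one has $\int\phi\dif\mu\le\int\psi\dif\mu+\|\phi-\psi\|_{\infty}\le\beta(\psi)+\|\phi-\psi\|_{\infty}$; taking the supremum over $\mu$ and then symmetrizing yields
\[
|\beta(\phi)-\beta(\psi)|\le\|\phi-\psi\|_{\infty}.
\]
That the maximum in \eqref{eq:max int} is actually attained is not even required for this inequality, but in any case it is guaranteed by the weak-$*$ compactness of $\mclm_T$ together with weak-$*$ continuity of $\mu\mapsto\int\phi\dif\mu$ for continuous $\phi$.

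There is effectively no main obstacle here: the argument is a two-line consequence of the triangle inequality, the parenthetical strengthening to $C(X)$ comes free of charge, and the resulting continuity of $\beta$ is in fact (global) $1$-Lipschitz continuity, which is why it can be used freely in later parts of the paper without explicit mention.
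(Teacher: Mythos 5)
Your argument is correct: for any $\mu\in\mclm_T$ the map $\phi\mapsto\int\phi\,\mathrm{d}\mu$ is $1$-Lipschitz in $\|\cdot\|_\infty$, and a pointwise supremum of $1$-Lipschitz functionals is $1$-Lipschitz, giving $|\beta(\phi)-\beta(\psi)|\le\|\phi-\psi\|_\infty$. The paper states this lemma without proof (calling it well-known), so there is nothing to compare against; your two-line triangle-inequality argument is exactly the standard one and is complete.
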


\subsection{On the subordination principle in ergodic optimization}\label{sse:Morris}

In this subsection, we shall review a result of Morris \cite[Theorem~1]{Mor07}, namely Lemma~\ref{lem:Morris} and its direct corollary Lemma~\ref{lem:subord} stated below. For our purpose, the statements here is slightly stronger than that in \cite{Mor07}, so we shall provide a proof in \S~\ref{app:Morris} for completeness. Following the settings in \cite{Mor07}, at this moment $X$ is only assumed to be a topological space. For a Borel probability measure $\mu$ on $X$, recall $\supp\mu$ defined by \eqref{eq:supp}. Also recall that $\phi:X\to \mbbr$ is called {\bf upper semi-continuous}, if for any $t\in\mbbr$, the set $\{x\in X:\phi(x)<t\}$ is open.

\begin{lem}[\cite{Mor07}] \label{lem:Morris}
  Let $X$ be a topological space and let $T:X\to X$ be continuous. Let $\mu$ be a $T$-invariant Borel probability measure on $X$. Let $\phi:X\to\mbbr$ be upper semi-continuous. Suppose $\phi$ is $\mu$-integrable with $\int\phi\dif\mu=0$, and 
  \[
  \gamma:=\sup_{\substack{ n\ge 1 \\  x\in X }} \mcls_n\phi(x)<+\infty.
  \] 
Then we have\footnote{Note that from the definition of $\gamma$ and the assumption $\int\phi\dif\mu=0$ we always have $\gamma\ge 0$.}:
  \[
  \inf_{\substack{ n\ge 1 \\  x\in \supp\mu  }} \mcls_n\phi(x) \ge -\gamma.
  \]
\end{lem}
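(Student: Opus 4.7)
The plan is to argue by contradiction: suppose some $x_0 \in \supp\mu$ and $n_0 \ge 1$ satisfy $\mcls_{n_0}\phi(x_0) < -\gamma$. Because $T$ is continuous, upper semi-continuity of $\phi$ passes to $\mcls_{n_0}\phi$, so I can choose $\delta > 0$ and an open neighborhood $U$ of $x_0$ with $\mcls_{n_0}\phi(y) \le -\gamma - \delta$ for every $y \in U$. The definition of $\supp\mu$ then gives $\mu(U) > 0$.

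The key step is to produce a single point $x$ whose orbit simultaneously (a) visits $U$ with positive asymptotic frequency, and (b) satisfies $\mcls_n\phi(x)/n \to 0$. For (b), I would observe that $\mcls_n\phi(x) \le \gamma$ for every $n$ and $x$, so the Birkhoff average $\phi^*$ of $\phi$ (which exists $\mu$-a.e.\ since $\phi \in L^1(\mu)$) satisfies $\phi^* \le 0$ a.e.; combined with $\int \phi^*\dif\mu = \int\phi\dif\mu = 0$, this forces $\phi^* = 0$ $\mu$-almost everywhere. For (a), applying Birkhoff's theorem to $\mathbf{1}_U$ shows that the visit frequency $f(x) := \lim_n n^{-1}\#\{0 \le k < n : T^k x \in U\}$ exists a.e.\ and has integral $\mu(U) > 0$, so there is a positive-measure set on which $f(x) \ge \alpha$ for some fixed $\alpha > 0$. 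A point $x$ satisfying both (a) and (b) can then be selected from the intersection of these two sets.

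To close the argument, for large $N$ I would greedily extract disjoint windows $[k_i, k_i + n_0) \subset [0,N)$ with $T^{k_i}x \in U$; since each extracted window blocks at most $n_0$ subsequent visit times, the number $M$ of windows satisfies $M \ge (\alpha/n_0 - o(1))N$. Partitioning $[0,N)$ into the $M$ windows (each contributing at most $-\gamma - \delta$) and the at most $M + 1$ complementary subintervals (each contributing at most $\gamma$ by the very definition of $\gamma$) yields
\[
\mcls_N\phi(x) \le M(-\gamma - \delta) + (M+1)\gamma = \gamma - M\delta.
\]
Dividing by $N$ and letting $N \to \infty$ produces $\limsup_N \mcls_N\phi(x)/N \le -\alpha\delta/n_0 < 0$, contradicting $\phi^*(x) = 0$. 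The only part needing real care is the combinatorial bookkeeping in the greedy extraction and the verification that each gap-sum is genuinely dominated by $\gamma$; the conceptual content is the Birkhoff-theoretic coupling of a ``very negative'' open set visited with positive frequency to a Birkhoff average that is nonetheless forced to vanish.
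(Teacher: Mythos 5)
Your argument is correct, and it takes a genuinely different route from the paper's. Both proofs begin identically: upper semi-continuity of $\mcls_N\phi$ propagates the inequality to an open set of positive $\mu$-measure, and Birkhoff plus $\gamma<\infty$ plus $\int\phi\dif\mu=0$ force the Birkhoff average $\phi^*$ to vanish $\mu$-a.e. Where you diverge is the device used to turn ``$\mu$-many visits to a very negative set'' into a negative drift along an orbit. The paper passes to the first return map $R_V$ to a full-measure subset $V$ of the bad neighborhood, invokes Kac's lemma to make the return time $r_V$ integrable, applies Birkhoff to $(V,R_V,\mu_V)$ to extract a point with finite mean return time $t(x)$, and then tiles $[0,t_{mN}(x))$ by the blocks $[t_{(m-1)N},t_{mN})$, each of which contributes at most $\alpha+\gamma$. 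You instead apply Birkhoff to the indicator $\mathbf{1}_U$ to get a point with positive visit frequency $f(x)\ge\alpha$ and $\phi^*(x)=0$, then do a direct greedy extraction of $\asymp\alpha N/n_0$ disjoint length-$n_0$ windows from $[0,N)$, each contributing at most $-\gamma-\delta$, with the at most $M+1$ gaps each bounded by $\gamma$. Your approach is more elementary (no induced dynamics, no Kac) and the bookkeeping is entirely combinatorial; the paper's approach is the more canonical ergodic-theoretic device and has the minor stylistic advantage that, by working with a free parameter $\alpha$ rather than by contradiction with $-\gamma-\delta$, it delivers the sharp bound $\ge-\gamma$ directly rather than as a limit. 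Both are valid, and the core estimate --- visits contribute $\le -\gamma$ (or $\le -\gamma-\delta$), gaps contribute $\le\gamma$, and there is roughly one gap per visit --- is the same in substance.
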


Now let $X$ be a compact metric space. For continuous $\phi:X\to\mbbr$, denote
\begin{equation}\label{eq:gamma}
  \gamma(\phi):=\sup_{\substack{ n\ge 1 \\  x\in X }} \big(
  \mcls_n\phi(x)-n\beta(\phi) \big) \in [0,+\infty].
\end{equation}
As an immediate corollary of Lemma~\ref{lem:Morris}, the following holds.

\begin{lem}[\cite{Mor07}]\label{lem:subord}
  Let $X$ be a compact metric space and let $T:X\to X$ be continuous. Let $\phi:X\to\mbbr$ be continuous with $\gamma(\phi)<+\infty$. Then for any maximizing measure $\mu$ of $\phi$, we have:
  \[
  \inf_{\substack{ n\ge 1 \\  x\in \supp\mu  }} \big[\mcls_n\phi(x) -n\beta(\phi)\big]\ge -\gamma(\phi).
  \]
As a corollary, the subordination principle holds for $\phi$: if $\nu\in\mclm_T$ satisfies $\supp\nu\subset\supp\mu$, then $\nu$ is also a maximizing measure of $\phi$.
\end{lem}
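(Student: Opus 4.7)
The plan is to derive both assertions directly from Lemma~\ref{lem:Morris} by applying it to the normalized function $\tilde\phi := \phi-\beta(\phi)$. This is the natural reduction because Lemma~\ref{lem:Morris} is stated for functions with $\mu$-integral zero, whereas a maximizing measure $\mu$ here satisfies $\int\phi\dif\mu = \beta(\phi)$.

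First I would verify that $\tilde\phi$ meets the hypotheses of Lemma~\ref{lem:Morris} with respect to any given maximizing measure $\mu$. Continuity of $\phi$ on the compact metric space $X$ gives continuity (hence upper semi-continuity) and boundedness of $\tilde\phi$, so $\tilde\phi$ is $\mu$-integrable and $\int\tilde\phi\dif\mu=\int\phi\dif\mu-\beta(\phi)=0$. Since $\mcls_n\tilde\phi(x)=\mcls_n\phi(x)-n\beta(\phi)$, the quantity $\sup_{n\ge 1,\,x\in X}\mcls_n\tilde\phi(x)$ equals $\gamma(\phi)$, which is finite by assumption. Lemma~\ref{lem:Morris} then yields
\[
\inf_{\substack{n\ge 1 \\ x\in\supp\mu}}\mcls_n\tilde\phi(x)\;\ge\;-\gamma(\phi),
\]
which is precisely the inequality claimed.

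For the subordination principle, I would fix $\nu\in\mclm_T$ with $\supp\nu\subset\supp\mu$. By the inequality just established, for every $x\in\supp\mu$ and every $n\ge 1$ we have $\mcls_n\phi(x)-n\beta(\phi)\ge -\gamma(\phi)$. Since $\supp\nu\subset\supp\mu$, this pointwise bound holds $\nu$-almost everywhere. Integrating against $\nu$ and using $T$-invariance to rewrite $\int\mcls_n\phi\dif\nu = n\int\phi\dif\nu$, we obtain $\int\phi\dif\nu\ge\beta(\phi)-\gamma(\phi)/n$. Letting $n\to\infty$ gives $\int\phi\dif\nu\ge\beta(\phi)$, while the reverse inequality is immediate from the definition of $\beta(\phi)$. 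Hence $\nu$ is also a maximizing measure.

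There is no serious obstacle here: the entire argument is essentially bookkeeping once Lemma~\ref{lem:Morris} is in hand. The substantive input is Lemma~\ref{lem:Morris} itself, whose proof the paper defers to \S~\ref{app:Morris}; for the present corollary I would treat it as a black box. The only minor care-point is ensuring that a pointwise lower bound on $\supp\mu$ can indeed be integrated against a measure $\nu$ supported in $\supp\mu$, which is justified because $\supp\nu\subset\supp\mu$ means $\nu(\supp\mu)=1$.
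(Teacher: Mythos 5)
Your proposal is correct and takes the same route the paper intends: normalize to $\tilde\phi=\phi-\beta(\phi)$, check the hypotheses of Lemma~\ref{lem:Morris} (continuity, integrability, zero mean with respect to the maximizing measure, and the identification of the relevant supremum with $\gamma(\phi)$), apply it, and then derive the subordination principle by integrating the resulting pointwise bound against $\nu$ and letting $n\to\infty$. The paper states this as an ``immediate corollary'' without spelling out these steps, and your write-up supplies exactly the missing bookkeeping.
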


\subsection{On unimodal maps}\label{sse:unimodal}

In this subsection we introduce some terminologies about unimodal maps and summarize some well-known facts that will be used. 

Let $X=[a,b]\subset \mbbr$ be a closed interval. A continuous map $T:X\to X$ is called a {\bf unimodal map} with {\bf turning point} $c\in (a,b)$, if the following hold:
\begin{itemize}
  \item $T$ is not monotone on $[a,b]$ but strictly monotone on both $[a,c]$ and $[c,b]$;
  \item $T(\{a,b\})\subset\{a,b\}$.
\end{itemize}
\begin{rmk}
As in various studies on unimodal maps, we include the additional assumption $T(\{a,b\})\subset\{a,b\}$ in the definition above. This mild  technical assumption is inessential, but will help us to simplify some arguments and to improve readability (especially in case (ii) of the proof of Proposition~\ref{prop:subord}).   
\end{rmk}  

In this article, a unimodal map $T:X\to X$ is called {\bf piecewise expanding}, if it is a Lipschitz map and if there exists $C>0$ and $\lambda>1$ such that for any $n\ge 1$ and any subinterval $J$ of $X$ with $T^n:J\to X$ being monotone, we have $|T^nJ|\ge C\lambda^n |J|$. Note that since $T$ is Lipschitz, the last condition is equivalent to the following: $|(T^n)'(x)|\ge C\lambda^n$ for any $n\ge 1$ and for Lebesgue a.e. $x\in X$. 

For a $C^1$ unimodal map $T:X\to X$ with turning point $c$, we say that it satisfies the {\bf Collet-Eckmann condition}, if there exists $C>0$ and $\lambda>1$ such that $|(T^n)'(Tc)|\ge C\lambda^n$ for any $n\ge 1$. 

Following Nowicki and Sands \cite{NS98}, a unimodal map $T:[a,b]\to [a,b]$ with turning (or critical) point  $c$ and with $T(a)=a$ is called {\bf S-unimodal}, if the following hold:
\begin{itemize}
  \item $T$ is $C^2$ on $[a,b]$;
  \item $|T'|^{-1/2}$ is convex on both $(a,c)$ and $(c,b)$;
  \item the turning point $c$ is non-flat (we refer to \cite{NS98} for the precise meaning);
  \item $T'(a)>1$.
\end{itemize}

We shall not make use of the conditions imposed on unimodal maps introduced above directly. All we need are their implications summarized in Lemma~\ref{lem:finite renorm} and Lemma~\ref{lem:expshrink} below. Lemma~\ref{lem:finite renorm} concerns the topological dynamics, while Lemma~\ref{lem:expshrink} concerns expanding properties.


Before stating Lemma~\ref{lem:finite renorm}, we need to recall the following. Let $Y\subset \mbbr$ be a closed interval. A continuous map $T:Y\to Y$ is called {\bf locally eventually onto}, or {\bf l.e.o.} for short, if for any subinterval $I$ of $Y$ there exists $n\ge 1$ such that $T^n I=Y$. Now let $T:X\to X$ be a unimodal map with turning point $c$. It is well-known that, if  $T:X\to X$ is either a piecewise expanding unimodal map or an S-unimodal map satisfying the Collet-Eckmann condition, then it is {\bf finitely renormalizable}, which in particular implies the following.

\begin{lem}\label{lem:finite renorm}
  Let $T:X\to X$ be either a piecewise expanding unimodal map  or an S-unimodal map satisfying the Collet-Eckmann condition. 
  Then there exists $r\ge 1$ such that $c\in (T^rc, T^{2r}c)$ and for $Y:=[T^rc, T^{2r}c]$, $T^r Y=Y$; moreover, $T^r:Y\to Y$ is l.e.o.. As a corollary, we have:
\begin{itemize}
  \item periodic points of $T$ are dense in $Y$;
  \item for each $y\in Y$, $\cup_{n\ge 1}T^{-n}y$ is dense in $X$.
\end{itemize}
\end{lem}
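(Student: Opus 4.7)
The plan is to invoke finite renormalizability in both classes of the hypothesis, extract the deepest renormalization interval as $Y$, verify the l.e.o.\ property for the induced map $T^r : Y \to Y$, and deduce the two corollaries from this.

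First I would recall that both piecewise expanding unimodal maps and S-unimodal Collet--Eckmann maps are finitely renormalizable. In the piecewise expanding setting this is immediate from the uniform expansion, which forces the lengths of nested restrictive intervals around $c$ to decay at least geometrically in the period; in the S-unimodal Collet--Eckmann setting this is classical and can be found in \cite{NS98} and the references therein. I would let $r \ge 1$ be the period of the deepest (smallest) periodic interval $W$ containing $c$, and observe that by construction $T^r : W \to W$ is a surjective unimodal map with turning point $c$ (the other turning points of $T^r$ correspond to preimages $T^{-k} c$ for $0 < k < r$, which lie outside $W$ by disjointness of the cycle). By a standard unimodal-map argument, surjectivity forces the critical value $T^r c$ to be an endpoint of $W$ and its image $T^{2r} c$ to be the other endpoint; hence $W = Y := [T^r c, T^{2r} c]$ with $c \in \inr Y$ and $T^r Y = Y$.

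Next I would establish that $T^r : Y \to Y$ is l.e.o. By construction $T^r|_Y$ is a non-renormalizable unimodal map (otherwise $r$ was not the deepest period), and it inherits either the piecewise expanding or the S-unimodal Collet--Eckmann property from $T$. Classical results (originating with Guckenheimer, Ma\~n\'e and Misiurewicz for the smooth case, and with the Lasota--Yorke school for the piecewise expanding case) then give l.e.o.: such a map has no wandering interval and no attracting or neutral periodic orbit, which together with non-renormalizability implies that every open subinterval has an iterate covering all of $Y$.

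The two corollaries follow routinely. For density of periodic points in $Y$, given any open $J \subset Y$ pick $n$ with $T^{rn} J = Y$; then $T^{rn}|_J$ maps $J$ onto $Y \supset J$, so it has a fixed point in $J$, which is a periodic point of $T$. For density of $\bigcup_{n \ge 1} T^{-n} y$ in $X$ for any $y \in Y$, given an open $I \subset X$, absence of wandering intervals guarantees that some $T^k I$ contains an open subinterval of $Y$, and l.e.o.\ on $Y$ then produces a further iterate $T^{rn+k} I \supset Y \ni y$. The main obstacle will be to cleanly justify finite renormalizability and the no-wandering-interval property for the piecewise expanding case under the minimal definition used here, since the smooth one-dimensional toolkit (Koebe distortion, Schwarzian derivative) does not apply directly and one must instead rely on the piecewise uniform expansion on monotone branches provided by the definition.
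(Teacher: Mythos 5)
The paper itself offers no proof of Lemma~\ref{lem:finite renorm}: the authors treat it as a compendium of classical facts from one-dimensional dynamics, pointing the reader to \cite[\S 8]{NS98} and stating in the remark that follows that $r=1$ corresponds to the non-renormalizable case and $r\ge 2$ to a finitely renormalizable one. There is therefore no ``paper proof'' to compare against; your task was to reconstruct the folklore argument, and your outline does follow the standard route (finite renormalizability, passage to the deepest restrictive interval, topological exactness of the non-renormalizable induced map, then the two corollaries as soft consequences of l.e.o.\ together with absence of wandering intervals).

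Two places deserve a bit more care than your sketch gives. First, the identification $W = Y = [T^r c, T^{2r}c]$: if $W$ denotes the \emph{maximal} restrictive interval of period $r$, surjectivity of $T^r|_W$ is not part of the definition; one either has to take $W$ with $T^r$-fixed boundary and then argue (using that $T^r$ is ``full'' on $W$, i.e., $T^r c$ equals an endpoint) that $T^{2r}c$ is the $T^r$-fixed endpoint, or else one should simply define $Y := [T^{2r}c, T^r c]$ and check directly that $T^r Y = Y$, which amounts to $T^{3r}c \ge T^{2r}c$; the latter follows because on the increasing branch $T^r$ can have no fixed point other than the repelling boundary one (another fixed point there would produce a non-repelling cycle in between, excluded by the piecewise expanding/CE hypotheses). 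Second, for the density of $\bigcup_n T^{-n}y$ you invoke ``absence of wandering intervals guarantees that some $T^k I$ contains an open subinterval of $Y$''; what absence of wandering intervals (plus absence of non-repelling cycles) directly gives is that some iterate of $I$ contains $c$, after which one still has to track the orbit into the renormalization cycle $Y \cup TY \cup \cdots \cup T^{r-1}Y$ and use l.e.o.\ on $Y$. These are exactly the kinds of details you yourself flag at the end as the delicate part in the piecewise expanding setting, and they can indeed be filled in using only the uniform expansion along monotone branches; your overall plan is sound.
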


\begin{rmk}
  We do not bother to discuss renormalization of unimodal maps in details, because all we need are given in Lemma~\ref{lem:finite renorm} above. We refer to \cite[\S~8]{NS98} for related concepts. 
  Let us only mention that, in Lemma~\ref{lem:finite renorm} above, $r=1$ corresponds to the case that $T$ is {\bf non-renormalizable}, while $r\ge 2$ implies that there exists a closed subinterval $I$ of $X$ such that $I\supset Y$ and $T^r:I\to I$ is a non-renormalizable unimodal map.   
\end{rmk}

%

%


In Lemma~\ref{lem:expshrink} below, when $T:X\to X$ is piecewise expanding, all the assertions  can be easily checked by definition. When $T:X\to X$ is S-unimodal and satisfies the Collet-Eckmann condition,  we refer to \cite{NP98,NS98}.

\begin{lem}\label{lem:expshrink}
  Let $T:X\to X$ be either a piecewise expanding  unimodal map or an S-unimodal map satisfying the Collet-Eckmann condition. Then the following hold.
\begin{itemize}
  \item For any non-empty compact subset $\Lambda\subset X$ with $c\notin \Lambda$ and $T\Lambda\subset\Lambda$, $T:\Lambda\to\Lambda$ is expanding. 
  \item $T:X\to X$ satisfies the {\bf exponential shrinking property}. That is to say, there exists $\ve>0$ and $\lambda\in(0,1)$ such that the following hold: if $I$ is a subinterval of $X$ with $|I|\le \ve$, then for any $n\ge 1$ and any connected component $J$ of $T^{-n}I$, we have $|J|\le \lambda^n$.  As a result, for any subinterval $J$ of $X$ and any $n\ge 1$, we have:
\begin{equation}\label{eq:shrink}
  |T^n J|\le \ve \implies \sum_{k=0}^{n-1} |T^k J| \le \frac{1}{1-\lambda}. 
\end{equation}
\end{itemize}
\end{lem}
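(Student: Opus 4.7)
My plan is to treat the two regularity settings separately. The piecewise expanding case is elementary from the definitions, while the S-unimodal Collet--Eckmann case I would extract from \cite{NP98,NS98}. In the piecewise expanding case, fix $C>0$ and $\lambda_0>1$ from the definition and let $L := \lip(T)$. For part (1), set $d := \dist(c,\Lambda) > 0$, which is positive since $\Lambda$ is compact and $c \notin \Lambda$. Since $T\Lambda \subset \Lambda$, induction gives $\dist(c, T^k x) \ge d$ for all $x \in \Lambda$ and $k \ge 0$. Fix $n \ge 1$ with $C\lambda_0^n \ge 2$ and pick $\eta > 0$ with $2 L^{n-1}\eta < d$. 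For $x, y \in \Lambda$ with $|x-y| \le 2\eta$, the interval $J_0 := [x,y]$ has length less than $d$ and contains $x \in \Lambda$, hence avoids $c$; the Lipschitz bound $|T^k J_0| \le L^k \cdot 2\eta < d$ together with $T^k x \in \Lambda$ shows by the same reasoning that $T^k J_0$ avoids $c$ for every $0 \le k \le n-1$. Therefore $T^n$ is strictly monotone on $J_0$, and the piecewise expansion estimate yields $|T^n x - T^n y| \ge C\lambda_0^n |x-y| \ge 2|x-y|$, so $T|_\Lambda$ is expanding.

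For part (2) in the piecewise expanding case, any connected component $J$ of $T^{-n} I$ is an interval on which $T^n$ is monotone, so $|I| = |T^n J| \ge C\lambda_0^n |J|$. Choosing $\varepsilon \le C$ and $\lambda := \lambda_0^{-1}$ yields $|J| \le \lambda^n$. For the summability conclusion \eqref{eq:shrink}, since $T^k J$ lies in a connected component of $T^{-(n-k)}(T^n J)$, the same estimate gives $|T^k J| \le \lambda^{n-k}$ whenever $|T^n J| \le \varepsilon$; summing the geometric series then yields $\sum_{k=0}^{n-1}|T^k J| \le 1/(1-\lambda)$.

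For the S-unimodal Collet--Eckmann case I would invoke the cited references. Part (1) is the one-dimensional analogue of Ma\~n\'e's hyperbolicity theorem: under CE there are no non-hyperbolic periodic orbits, so together with the non-flatness of the critical point the arguments of \cite{NS98} yield uniform expansion on any compact forward-invariant set avoiding $c$. Part (2) is the backward contraction property proved in \cite{NP98}: the CE lower bound $|(T^n)'(Tc)| \ge C\lambda^n$ combined with the Koebe distortion principle allows one to control pullbacks of any sufficiently short interval by derivatives along the critical orbit, yielding exponentially decaying component lengths. The summability consequence \eqref{eq:shrink} then follows by the same geometric series argument as above. The main obstacle here, and the reason one needs the CE-plus-Koebe machinery rather than a simple pointwise derivative bound, is that pullback components may graze the critical point, where the pointwise derivative of $T$ collapses to zero; Koebe distortion is precisely what converts the postcritical CE estimate into a usable bound on such delicate pullbacks.
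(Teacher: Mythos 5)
Your Part~(1) argument in the piecewise expanding case is correct, and your treatment of the S-unimodal Collet--Eckmann case by citation to \cite{NP98,NS98} is exactly what the paper does (the paper offers no proof, merely the remark that the piecewise expanding case is ``easily checked by definition'' and a reference for the smooth case). However, your Part~(2) argument in the piecewise expanding case contains a genuine gap: the assertion that \emph{any connected component $J$ of $T^{-n}I$ is an interval on which $T^n$ is monotone} is false for unimodal maps. If the critical orbit visits $I$, a pullback component will ``fold.'' For a concrete counterexample, take the full tent map $T(x)=1-|2x-1|$ on $[0,1]$ with $c=\tfrac12$ and $I=(1-\delta,1]$: then $T^{-1}I=\bigl(\tfrac{1-\delta}{2},\tfrac{1+\delta}{2}\bigr)$ is a single interval containing $c$ on which $T$ is not monotone, and indeed $|T^{-1}I|=|I|$, so the one-step bound $|J|\le |I|/(C\lambda_0)$ you assert simply fails here.

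Because of such folds, the estimate $|J|\le |I|/(C\lambda_0^n)$ is not automatic; each $k$ with $c\in\operatorname{int}(T^kJ)$ costs an extra factor (roughly $2/(C\lambda_0)$), and one must argue that for $\varepsilon$ sufficiently small the number of such folding times is uniformly bounded along any pullback chain, using that after a fold the subsequent image has $T(c)$ as an endpoint and hence cannot again engulf $c$ until the critical orbit returns $\varepsilon$-close to $c$, which the earlier expansion estimate forbids. The subsequent ``geometric series'' step deriving \eqref{eq:shrink} from the exponential shrinking bound is fine, but it rests on the unproved shrinking estimate. So Part~(2) in the piecewise expanding case is not established as written; you need to explicitly control (or rule out for small $\varepsilon$) the folding of pullback components at $c$ before the expansion estimate can be telescoped.
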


We also need two elementary and well-known facts Lemma~\ref{lem:Markov} and Lemma~\ref{lem:open} given below. The former is to construct finite open covers with Markov property for invariant sets of a unimodal map, while the latter is to verify that the restriction of a unimodal map to some invariant set is an open map.

\begin{lem}\label{lem:Markov}
  Let $T:X\to X$ be a unimodal map with turning point $c$. Let $\mcli=\{(a_i,b_i)\subset X:1\le i\le n\}$ be a collection of open intervals with the following properties:
\begin{itemize}
  \item for each $I\in\mcli$, $c\notin I$;
  \item for each $1\le i\le n$, $Ta_i,Tb_i\notin \cup_{I\in\mcli}I$.
\end{itemize}
Then for any $I,J\in\mcli$ with $TI\cap J\ne\emptyset$, we have $TI\supset J$.
\end{lem}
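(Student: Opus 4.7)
The proof is essentially a one-line observation once the hypotheses are unpacked, but let me lay it out so nothing is swept under the rug. Fix $I=(a_i,b_i)\in\mcli$ and $J=(a_j,b_j)\in\mcli$ with $TI\cap J\ne\emptyset$. The plan is:

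First, I would use the hypothesis $c\notin I$ together with the unimodality of $T$ to observe that $I$ lies entirely in one of the two monotonicity intervals $[a,c]$ or $[c,b]$. Consequently $T$ is continuous and strictly monotone on the closed interval $[a_i,b_i]$, so $TI$ is an open interval whose endpoints are precisely $Ta_i$ and $Tb_i$; that is, $TI=(\alpha,\beta)$ with $\{\alpha,\beta\}=\{Ta_i,Tb_i\}$ and $\alpha<\beta$.

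Next, I would invoke the second hypothesis: neither $Ta_i$ nor $Tb_i$ lies in $\bigcup_{I'\in\mcli}I'$, and in particular $\alpha,\beta\notin J=(a_j,b_j)$. Since $J$ is open, this means either $\alpha\le a_j$ or $\alpha\ge b_j$, and similarly for $\beta$. Pick some $x\in TI\cap J$, which gives $\alpha<x<\beta$ and $a_j<x<b_j$. From $\alpha<x<b_j$ the alternative $\alpha\ge b_j$ is ruled out, forcing $\alpha\le a_j$; symmetrically, $\beta>x>a_j$ forces $\beta\ge b_j$. Hence $(\alpha,\beta)\supseteq(a_j,b_j)$, i.e., $TI\supseteq J$, as required.

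I do not foresee any real obstacle: the only minor point of care is to confirm that $TI$ is genuinely an open interval with the prescribed endpoints, which is immediate from the monotonicity of $T$ on each half-domain determined by $c$ (allowing for the possibility that $c$ itself is an endpoint of $I$, which is permitted by $c\notin I$ but causes no issue since $T$ remains continuous and strictly monotone on $\overline{I}$). The rest is the elementary fact that an open interval meeting another open interval without its endpoints being interior to the latter must swallow the latter.
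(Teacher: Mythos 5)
Your argument is exactly the paper's: use $c\notin I$ to see that $TI$ is the open interval with endpoints $Ta_i,Tb_i$, note these endpoints avoid $J$, and combine with $TI\cap J\ne\emptyset$ to conclude $TI\supset J$. You merely spell out a few extra intermediate steps; the approach and the content are identical.
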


\begin{proof}
  Denote $I=(a,b)$. Since $c\notin I$, $TI=(Ta,Tb)$. Combining this with $TI\cap J\ne\emptyset$ and $Ta,Tb\notin J$ we conclude that  $TI\supset J$.
\end{proof}

\begin{lem}\label{lem:open}
  Let $X$ be a topological space and let $T:X\to X$ an arbitrary map. Let $U\subset X$ be open and suppose $\Lambda:=\cap_{n=0}^{\infty} T^{-n}U$ is nonempty. If $T:U\to X$
  is an open map, then $T:\Lambda\to \Lambda$ is also an open map. As a corollary, if $T:X\to X$ is a unimodal map with turning point $c$ and if $c\notin U$, then $T:\Lambda\to \Lambda$ is an open map.
\end{lem}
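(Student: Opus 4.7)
The plan is to first verify forward invariance $T(\Lambda)\subset\Lambda$, so that $T\colon\Lambda\to\Lambda$ makes sense, and then to deduce openness of this restriction from openness of $T\colon U\to X$ via a single set-theoretic identity. Forward invariance is immediate from the definition of $\Lambda$: if $x\in\Lambda$, then $T^n(Tx)=T^{n+1}x\in U$ for every $n\ge 0$, hence $Tx\in\Lambda$.

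For the main step I would take an arbitrary relatively open $V\subset\Lambda$, write $V=W\cap\Lambda$ with $W$ open in $X$, and replace $W$ by $W\cap U$ so that, without loss of generality, $W\subset U$ (this is allowed because $\Lambda\subset U$). The key claim is the identity
\[
T(V)=T(W)\cap\Lambda.
\]
The inclusion $\subset$ follows from $V\subset W$ together with $T(\Lambda)\subset\Lambda$. For $\supset$, given $z\in T(W)\cap\Lambda$, pick any $w\in W$ with $Tw=z$; then $w\in U$ because $W\subset U$, and $T^{n+1}w=T^{n}z\in U$ for every $n\ge 0$ since $z\in\Lambda$, so $T^{m}w\in U$ for all $m\ge 0$. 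This places $w$ in $\Lambda\cap W=V$ and yields $z\in T(V)$. Since $T\colon U\to X$ is open and $W$ is open in $X$ and contained in $U$, $T(W)$ is open in $X$, so $T(V)=T(W)\cap\Lambda$ is open in $\Lambda$, as required. The only genuine subtlety is this reverse inclusion: it crucially exploits that for $z=Tw\in\Lambda$ with $w\in U$, the point $w$ automatically lies in $\Lambda$, which is precisely what the intersection definition of $\Lambda$ buys us.

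For the corollary, it remains to verify that a unimodal map $T\colon X\to X$ with turning point $c$ restricts to an open map $T\colon U\to X$ whenever $c\notin U$. Writing $X=[a,b]$, the sets $[a,c)$ and $(c,b]$ are open in $X$, and $T$ restricted to each is strictly monotone and continuous, hence a homeomorphism onto its image. The boundary condition $T(\{a,b\})\subset\{a,b\}$ forces $T([a,c))$ and $T((c,b])$ to be half-open subintervals of $X$ whose only boundary point inside $X$ is an element of $\{a,b\}$, so both images are themselves open in $X$. Consequently any open subset of $[a,c)$ or $(c,b]$ is mapped by $T$ to a set open in $X$, and since $U=(U\cap[a,c))\sqcup(U\cap(c,b])$ is a disjoint union of such pieces, $T$ sends open subsets of $U$ to open subsets of $X$. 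The main statement then delivers the corollary.
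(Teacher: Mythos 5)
Your proof is correct and follows essentially the same route as the paper: both hinge on the set-theoretic identity $T(W\cap\Lambda)=T(W)\cap\Lambda$ for open $W\subset U$, the paper phrasing it via $\Lambda=U\cap T^{-1}\Lambda$ and a local neighborhood argument, you phrasing it globally. You additionally spell out the verification that a unimodal map with $c\notin U$ is open on $U$, which the paper leaves implicit; that check is also correct.
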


\begin{proof}
  By the definition of $\Lambda$, $U$ is an open neighborhood of $\Lambda$ in $X$ and $T\Lambda\subset \Lambda=U\cap T^{-1}\Lambda$. Given $x\in\Lambda$, since $T:U\to X$ is an open map and since $x,Tx\in U$, there exists an open neighborhood $V$ of $x$ in $X$ and an open neighborhood $V'$ of $Tx$ in $X$, such that $V,V'\subset U$ and $TV\supset V'$. Since $\Lambda=U\cap T^{-1}\Lambda$ and since $V\subset U$, $T(V\cap\Lambda)=T(V\cap T^{-1}\Lambda)=TV\cap \Lambda\supset V'\cap \Lambda$, which implies that  $T:\Lambda\to\Lambda$  is an open map. 
\end{proof}

\section{Proof of the Main results}\label{se:unimodal}

In this section, we shall prove Theorem~\ref{thm:unimodal} in \S~\ref{sse:proof unimodal} and then deduce Theorem~\ref{thm:exclu} from it in \S~\ref{sse:proof exclu}. For preparation, we need to prove Proposition~\ref{prop:subord} first.

\subsection{Establishing subordination}\label{sse:subord}

This subsection is devoted to proving the following. Recall $\gamma(\phi)$ defined by \eqref{eq:gamma}. 

\begin{prop}\label{prop:subord}
  Let $T:X\to X$ be either a piecewise expanding unimodal map or an S-unimodal map satisfying the Collet-Eckmann condition. Then there exists a constant $C>0$ such that  $\gamma(\phi)\le C\lip(\phi)$ holds for any $\phi\in Lip(X)$.
\end{prop}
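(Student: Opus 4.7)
The plan is to reduce the proposition to a quantitative closing/shadowing statement. Since $\gamma(\phi+c)=\gamma(\phi)$ and $\lip(\phi+c)=\lip(\phi)$ for every constant $c$, I would first normalize $\min_X\phi=0$, so that $0\le\beta(\phi)\le L\lip(\phi)$ with $L:=\diam(X)$. With $\ve_0$ and $\lambda$ as in Lemma~\ref{lem:expshrink}, the goal becomes: produce a universal constant $N_0\ge 1$ such that for every $x\in X$ and every $n\ge 1$ there is a periodic point $y\in X$ of period $m$ with $n\le m\le n+N_0$ and
\[
\sum_{k=0}^{n-1}|T^kx-T^ky|\le \frac{1}{1-\lambda}.
\]
Granting this, the inequalities $\mcls_m\phi(y)\le m\beta(\phi)$ (by $m$-periodicity of $y$), $\mcls_{m-n}\phi(T^ny)\ge 0$ (since $\phi\ge 0$), and $|\mcls_n\phi(x)-\mcls_n\phi(y)|\le\lip(\phi)\sum_{k=0}^{n-1}|T^kx-T^ky|$ combine to give
\[
\mcls_n\phi(x)-n\beta(\phi)\le(m-n)\beta(\phi)+\frac{\lip(\phi)}{1-\lambda}\le\Big(N_0L+\frac{1}{1-\lambda}\Big)\lip(\phi),
\]
which is the desired bound with $C=N_0L+1/(1-\lambda)$.

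For the closing construction I would work with $S:=T^r$ on $Y=[T^rc,T^{2r}c]$ as provided by Lemma~\ref{lem:finite renorm}, which is l.e.o., and then transfer back to $(X,T)$. Uniform l.e.o.\ produces $N_1=N_1(\ve_0)$ with $S^{N_1}I\supset Y$ whenever $I\subset Y$ is an interval of length $\ge\ve_0$; enlarging $N_1$ (and using Lemma~\ref{lem:expshrink} to make all monotone inverse branches of $S^{N_1}$ over $Y$ much shorter than $\ve_0$), any such subinterval $I\subset Y$ is then guaranteed to contain the image of at least one monotone inverse branch of $S^{N_1}$. By continuity of $\mcls_n\phi$ it is enough to treat $x$ whose first $n$ iterates avoid $c$. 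For such $x\in Y$ and $n$, I would pick an interval $I_0\subset Y$ of length $\ve_0$ containing $S^nx$ and perturbed so that its interior avoids the finite set $\{S^jc:1\le j\le n+N_1\}$; this guarantees that the connected component $J$ of $S^{-n}(I_0)$ containing $x$ is a monotone inverse branch of $S^n$ with $S^nJ=I_0$, and the second conclusion of Lemma~\ref{lem:expshrink} gives $\sum_{k=0}^{n-1}|S^kJ|\le 1/(1-\lambda)$.

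Composing a monotone inverse branch $Y\to I_0'\subset I_0$ of $S^{N_1}$ with the branch $I_0\to J$ of $S^n$ produces a monotone map $\psi\colon Y\to J'\subset J$ that is a right-inverse to $S^{n+N_1}$ on $J'$. Since $S^{n+N_1}(J')=Y\supset J\supset J'$, the intermediate value theorem yields a fixed point $y\in J'$ of $S^{n+N_1}$, hence a $T$-periodic point of period at most $r(n+N_1)$. The shadow bound follows from $x,y\in J$ together with $\sum_{k=0}^{n-1}|S^kJ|\le 1/(1-\lambda)$ and a short Lipschitz argument controlling the extra intra-$r$ displacements by the $S$-scale shadow. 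The principal technical obstacles are (i) the feasibility of the perturbation of $I_0$ uniformly in $n$, handled by the fact that the set $\{S^jc:1\le j\le n+N_1\}$ is finite and the freedom to shift $I_0$ while keeping $S^nx\in I_0$ is ample; and (ii) the renormalization bookkeeping to convert $S$-periodic orbits to $T$-periodic ones and to absorb the finite transient bringing a general $x\in X$ into $\bigcup_{j<r}T^jY$; both contribute only bounded additive corrections, so $N_0=rN_1+O(1)$ and the shadow bound remain universal.
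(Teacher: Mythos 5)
Your reduction to a quantitative closing lemma is sound, and the construction via the l.e.o.\ property of $T^r\vert_Y$ and the exponential shrinking property correctly handles the case $x\in Y$ (this is essentially the paper's Case~(i), though the paper chooses a single interval $I\ni x$ with $\max_{0\le k<n}|T^kI|=\ve$ rather than perturbing $I_0$ to avoid the critical orbit and composing inverse branches; both work). However, there is a genuine gap in your treatment of points outside $Y$: you assert that the transient bringing a general $x\in X$ into $\bigcup_{j<r}T^jY$ contributes ``only bounded additive corrections,'' but this transient is \emph{not} bounded. Whenever $Y=[T^rc,T^{2r}c]$ is a proper subinterval of $X=[a,b]$ --- in particular whenever $T$ is renormalizable, $r\ge 2$ --- the set $X\setminus\bigcup_{j<r}T^jY$ contains periodic points of $T$ (e.g.\ a fixed endpoint $a$ with $T(a)=a$), and points near those periodic points take arbitrarily long to enter $Y$; some points, lying on the invariant Cantor-like set outside the renormalization cycle, never enter $Y$ at all. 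For such $x$ your closing construction does not apply, and the inequality $\mcls_n\phi(x)-n\beta(\phi)\le C\lip(\phi)$ must be established by a different argument.

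The paper addresses exactly this in its Case~(ii), where $x,Tx,\ldots,T^{n-1}x$ all lie outside $Y$. It builds a finite collection $\mcli$ of intervals with a Markov property ($TI\cap J\ne\emptyset\Rightarrow TI\supset J$) from preimages of a periodic point in $Y$, so that the whole excursion stays in $\bigcup_{I\in\mcli}\overline I$. It then records, for each $I_k$, the \emph{last} time $t_k$ the orbit visits $\overline{I_k}$; maximality forces the visited intervals $I_0,\dots,I_l$ to be pairwise distinct, giving $l+1\le\#\mcli$, a universal bound. Each block $T^{s_k}x,\dots,T^{t_k}x$ starting and ending in $\overline{I_k}$ is compared with a periodic orbit obtained from the Markov property, and the shrinking estimate bounds each block's Birkhoff-sum defect by $\lip(\phi)/(1-\lambda)$, so the total is $O(\#\mcli)\cdot\lip(\phi)$. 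You would need a comparable argument for the excursion outside $Y$; the ``bounded additive correction'' reasoning does not supply it.
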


\begin{proof}
Given $\phi\in Lip(X)$, $x\in X$ and $n\ge 1$, it suffices to show that $\mcls_n\phi(x)-n\beta(\phi)\le C\lip(\phi)$ for some $C>0$ independent of $\phi,x,n$. To begin with, note that 
\[
|\phi(z)-\beta(\phi)|\le  |X|\cdot \lip(\phi) , \quad \forall z\in X,
\]
which will be used several times.  Let $r\ge 1$ be as given in Lemma~\ref{lem:finite renorm} and denote $Y=[T^rc,T^{2r}c]$. Since $Y$ is $T^r$-invariant, we only need to deal with the following two cases: 
\begin{itemize}
  \item [(i)] $x\in Y$;
  \item [(ii)] $x,Tx,\cdots,T^{n-1}x\notin Y$.  
\end{itemize}     

{\bf Case (i)}. In this case we shall compare $\mcls_n\phi(x)$ with $\mcls_n\phi(y)$ for some periodic point $y$. Let $\ve>0$ and $\lambda\in(0,1)$ be as given in Lemma~\ref{lem:expshrink}, and we may further suppose that $|Y|\ge \ve$. Since $T^r: Y\to Y$ is l.e.o., there exists $m\ge 1$, independent of $\phi,x,n$, such that if $J$ is a subinterval of $Y$ with  $\max_{0\le k<r}|T^k J|\ge \ve$, then $T^s J=Y$ for any $s\ge m$ with $r\mid s$.  Now let $I$ be a closed subinterval of $Y$ with $x\in  I$ and $\max_{0\le k<n}|T^kI|=\ve$, whose existence follows easily from the continuity of $\max_{0\le k<n}|T^kI|$ as a function of the endpoints of $I$ together with $|Y|\ge \ve$. Let $p\ge n+m$ be the minimal integer with $r\mid p$. Then $T^p I=Y$ and hence there exists a fixed point $y\in I$ of $T^p$. Now we have the following estimates: 
\[
\sum_{0\le k<n} [\phi(T^k x)-\phi(T^k y)]\le \lip(\phi) \cdot \sum_{0\le k<n} |T^k I|  \le \frac{\lip(\phi)}{1-\lambda}, 
\]
where the latter inequality is due to \eqref{eq:shrink}, and

\[
\begin{split}
 \sum_{0\le k<n} [\phi(T^k y)-\beta(\phi)]    & = \sum_{0\le k<p} [\phi(T^k y)-\beta(\phi)] - \sum_{n\le k<p} [\phi(T^k y)-\beta(\phi)] \\
     & \le 0 +  (m+r)|X|\cdot \lip(\phi).
\end{split}
\]
Combining the two displayed lines above, the proof in this case is done.

{\bf Case (ii)}. In this case we shall split the orbit $x,Tx,\cdots,T^{n-1}x$ 
into several pieces (the number of pieces will be bounded by a constant independent of $\phi,x,n$), and compare the Birkhoff sum of $\phi$ along each piece with 
that along some period orbit. Let $z\in Y$ be a periodic point such that $\cup_{k\ge 0}T^{-k} z$ is dense in $X$, whose existence is guaranteed by Lemma~\ref{lem:finite renorm}. Then we can find $m\ge 1$, independent of $\phi,x,n$, such that for $E_m:=\cup_{0\le k < m}T^{-k} z$ the following properties hold.
\begin{itemize}
  \item $TE_m\subset E_m$. 
  \item There exist  $u,v\in E_m$ such that $c\in [u,v]\subset Y$.
  \item Let $\mcli$ denote the collection of connected components of $X\setminus ([u,v]\cup E_m)$. For each $I\in\mcli$, $|I|\le\ve$. 
\end{itemize} 
From the properties above we know that $x,Tx,\cdots,T^{n-1}x\in \cup_{I\in\mcli} \overline{I}$. Moreover, together with the fact that  $T(\{a,b\})\subset \{a,b\}$ for $X=[a,b]$, by Lemma~\ref{lem:Markov}, we have:  for $I,J\in\mcli$, if $TI\cap J\ne\emptyset$, then $TI\supset J$.

Let us pick up pairwise distinct $I_0,\cdots,I_l\in \mcli$ and associated times $0=s_0\le t_0<\cdots<s_l\le t_l=n-1$ by induction as follows, where $0\le l<\#\mcli$. To begin with, let $s_0=0$ and let $I_0\in\mcli$ be such that $x\in \overline{I_0}$. Once $s_k$ and $I_k$ are chosen, let $t_k<n$ be the maximal integer such that $T^{t_k}x\in \overline{I_k}$. If $t_k=n-1$, then the induction is ended and $l=k$; otherwise, let $s_{k+1}=t_k+1$ and choose $I_{k+1}\in\mcli$ such that $T^{s_{k+1}}x\in \overline{I_{k+1}}$. 

According to the choice of $I_0,\cdots,I_l$ and associated $s_k,t_k$ as above, when $s_k<t_k$, there exists a closed subinterval $J_k$ of $\overline{I_k}$ such that $T^{s_k}x\in J_k$ and $T^{t_k-s_k}$ maps $J_k$ onto $\overline{I_k}$. In this situation,  there exists a fixed point $y_k\in J_k$ of $T^{t_k-s_k}$ and hence 
\[
\sum_{j=s_k}^{t_k-1}[\phi(T^j x)-\beta(\phi)] \le \sum_{i=0}^{t_k-s_k-1}[\phi(T^{s_k+i} x)- \phi(T^i y_k)] \le \frac{\lip(\phi)}{1-\lambda},
\]
where the latter inequality is due to \eqref{eq:shrink}. It follows that 

\[
\begin{split}
 \sum_{j=0}^{n-1} [\phi(T^j x)-\beta(\phi)]    & =\sum_{k=0}^l [\phi(T^{t_k} x)-\beta(\phi)] + \sum_{\substack{0\le k\le l \\ s_k<t_k}} \sum_{j=s_k}^{t_k-1}[\phi(T^j x)-\beta(\phi)] \\
     & \le (l+1)\cdot \left(|X|\cdot \lip(\phi) + \frac{\lip(\phi)}{1-\lambda}\right).
\end{split}
\]
Since $l+1\le \#\mcli\le \# E_m +1\le 2^{m}$, the proof in this case is done.
\end{proof}

\subsection{Proof of Theorem~\ref{thm:unimodal}} \label{sse:proof unimodal}

In this subsection we prove Theorem~\ref{thm:unimodal}. Due to Lemma~\ref{lem:reducible}, we only need to prove the following.

\begin{prop}\label{prop:reducible}
  $(X,T)$ given in Theorem~\ref{thm:unimodal} is reducible.
\end{prop}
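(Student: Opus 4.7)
The plan is to take $\mcld$ to be the set of $\phi\in Lip(X)$ admitting a unique maximizing measure, which is residual (hence dense) in $Lip(X)$ by standard Baire arguments (upper semi-continuity of $\mu\mapsto\int\phi\dif\mu$ on the weak-$*$ compact set $\mclm_T$). Fix $\phi_*\in\mcld\setminus\mclp_T$ with unique maximizing measure $\mu_*$, which is necessarily not supported on any periodic orbit. The first goal is to show $c\notin\supp\mu_*$. Suppose for contradiction $c\in\supp\mu_*$; forward-invariance of $\supp\mu_*$ yields $\omega(c)\subset\supp\mu_*$. Proposition~\ref{prop:subord} gives $\gamma(\phi_*)\le C\lip(\phi_*)<\infty$, so Lemma~\ref{lem:subord} implies every $T$-invariant probability measure $\eta$ with $\supp\eta\subset\supp\mu_*$ maximizes $\phi_*$, and must therefore equal $\mu_*$ by uniqueness. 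Each alternative on $\omega(c)$ now gives a contradiction: (a) non-unique ergodicity of $\omega(c)$ supplies two distinct such measures; (b) the uniform measure on a periodic orbit $\mclo\subset\omega(c)$ is one such $\eta$, forcing $\mu_*$ to be periodic; (c) every invariant measure on $\omega(c)$ being $\mu_*$ forces $\supp\mu_*\subset\omega(c)$, contradicting $c\in\supp\mu_*\setminus\omega(c)$.

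With $c\notin\supp\mu_*$ secured, choose an open neighborhood $W$ of $c$ with $\overline{W}\cap\supp\mu_*=\emptyset$, and construct $\Lambda$ as a compact $T$-invariant subset of $X\setminus W$ containing $\supp\mu_*$ via the Markov-partition machinery already used in the proof of Proposition~\ref{prop:subord}. Concretely, Lemma~\ref{lem:finite renorm} gives a periodic point $z\in Y$ with dense iterated preimages; for sufficiently large $m$, the finite set $E_m=\bigcup_{0\le k<m}T^{-k}z$ contains points $u<c<v$ with $[u,v]\subset W$, and the connected components $\mcli$ of $X\setminus([u,v]\cup E_m)$ form a Markov family by Lemma~\ref{lem:Markov}. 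The compact invariant set $\Lambda$ obtained as the image under the natural coding map of the subshift of finite type associated to $\mcli$ (restricted to intervals in the strongly connected component meeting $\supp\mu_*$) satisfies $T\Lambda=\Lambda$, $\supp\mu_*\subset\Lambda$, and $c\notin\Lambda$; Lemma~\ref{lem:expshrink} gives expansion and Lemma~\ref{lem:open} applied with $U=\bigcup_{I\in\mcli}I$ gives openness of $T|_\Lambda$.

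The remaining and main task is to show that for $\phi$ in a small enough Lipschitz neighborhood $\mclv$ of $\phi_*$, every maximizing measure $\nu$ of $\phi$ satisfies $\supp\nu\subset\Lambda$. By ergodic decomposition it suffices to treat ergodic $\nu$; the crux is to rule out $c\in\supp\nu$. Assume $c\in\supp\nu$; then $\omega(c)\subset\supp\nu$, and by Lemma~\ref{lem:subord} applied to $\nu$ every invariant measure $\eta$ on $\omega(c)$ maximizes $\phi$. Under alternative (a), fix some $\eta\ne\mu_*$ on $\omega(c)$: by uniqueness of $\mu_*$ for $\phi_*$, $\int\phi_*\dif\eta\le\beta(\phi_*)-\delta$ for some $\delta>0$ depending only on $\phi_*$ and $\eta$; shrinking $\mclv$ so that $\|\phi-\phi_*\|_\infty$ and $|\beta(\phi)-\beta(\phi_*)|$ are both $<\delta/4$ forces $\int\phi\dif\eta<\beta(\phi)$, contradicting that $\eta$ maximizes $\phi$. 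Under (b) the same argument works with $\eta$ taken as the uniform measure on a periodic orbit $\mclo\subset\omega(c)$. Under (c), the same argument covers the case where $\omega(c)$ is not uniquely ergodic; in the remaining sub-case (uniquely ergodic $\omega(c)$ with measure $\mu_*$), a finer cohomological analysis is required, applying Lemma~\ref{lem:subord} directly at the point $x=c\in\supp\nu$ to obtain $\mcls_n\phi(c)-n\beta(\phi)\ge-\gamma(\phi)$ for all $n$, and using $c\notin\omega(c)$ to derive a contradiction (after further refinement of $\mcld$ if necessary). Once $c\notin\supp\nu$, the compact $T$-invariant set $\supp\nu$ lies in $X\setminus W$ for $W$ chosen small enough and $\mclv$ correspondingly shrunk, and is therefore contained in $\Lambda$.

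The hardest step is the last one, and within it the most delicate point is the final sub-case of alternative (c), in which the direct subordination obstruction collapses and an additional cohomological ingredient is needed to close the argument.
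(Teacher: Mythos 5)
Your overall plan diverges from the paper's in a way that creates a genuine gap, and you correctly diagnose where it is: the sub-case of alternative (c) in which $\omega(c)$ is uniquely ergodic and its unique invariant measure coincides with $\mu_*$. Your argument in that sub-case is not a proof --- invoking ``a finer cohomological analysis'' and ``further refinement of $\mcld$ if necessary'' does not close it --- and it genuinely cannot be closed within your framework, because you chose $\mcld$ to be the set of functions with a unique maximizing measure and then try to show $c\notin\supp\mu_*$ directly for every $\phi_*\in\mcld$. The paper does not attempt this.

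The paper's proof introduces a different device that resolves exactly this difficulty. Given $\phi\in\mclv_*$ it defines a compact invariant set $K_\phi$ (built from the finiteness constant $C_*$ of Proposition~\ref{prop:subord}) with the property that $\mu$ maximizes $\phi$ iff $\supp\mu\subset K_\phi$ (Lemma~\ref{lem:u.s.c.}(1)), and proves that $\phi\mapsto K_\phi$ is upper semi-continuous (Lemma~\ref{lem:u.s.c.}(2)). The dense set is then taken to be $\mcld=\{\phi\in\mclv_*: c\notin K_\phi\text{ or }\phi\in\mclp_T\}$, and robustness of the support constraint over a neighborhood $\mclv$ is \emph{automatic} from upper semi-continuity of $K_\phi$, once a Markov neighborhood $\cup_{I\in\mcli}I$ of $K_{\varphi_*}$ is built via Lemma~\ref{lem:admissible Markov}. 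Crucially, in case (iii) the paper does \emph{not} try to show $c\notin K_\phi$ for the given $\phi$: instead, if $c\in K_\phi$ and $\phi\notin\mclp_T$, it observes $\omega(c)\subset K_\phi$ contains no periodic point, builds an expanding Markov repeller $\Lambda\supset\omega(c)$ via Lemma~\ref{lem:admissible Markov}, and then \emph{perturbs}: applying Theorem~\ref{thm:Contreras} to $(\Lambda,T)$ yields a periodic orbit $\mclo_\ve\subset\Lambda$ such that $\varphi_\ve:=\phi-\ve\cdot\dist(\cdot,\mclo_\ve)$ is uniquely maximized on $\Lambda$ by $\mclo_\ve$; since no invariant measure on $\omega(c)$ can equal the measure on $\mclo_\ve$ (as $\omega(c)$ has no periodic point), one gets $c\notin K_{\varphi_\ve}$. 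This perturbation argument, producing a new nearby function rather than analyzing the original one, is precisely the ``additional ingredient'' your outline is missing. Your treatment of cases (a) and (b) in both the first step ($c\notin\supp\mu_*$) and the perturbative step is essentially sound and parallels the paper's cases (i) and (ii), but those cases are not where the difficulty lies, and your proposal does not constitute a proof of the proposition.
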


The rest of this subsection is devoted to the proof of  Proposition~\ref{prop:reducible}. We need Lemma~\ref{lem:admissible Markov} and Lemma~\ref{lem:u.s.c.} below for preparation. To state Lemma~\ref{lem:admissible Markov}, let us introduce some terminology for convenience. 
\begin{defn}
  Let $T:X\to X$ be a Lipschitz unimodal map. Let $K$ be a non-empty compact subset of $X$ with $TK=K$. Let $\mcli$ be a collection of finitely many open subintervals in $\mbbr$. We say that 
  $(K,\mcli)$ is an {\bf admissible pair}, if $K\subset \cup_{I\in \mcli}I\subset X$, and for $\Lambda:=\cap_{n\ge 0} T^{-n}(\cup_{I\in \mcli} I)$,  $\Lambda$ is compact and $T:\Lambda\to\Lambda$ is surjective, open and expanding.
\end{defn}

\begin{lem}\label{lem:admissible Markov}
Let $X=[a,b]$ and let $T:X\to X$ be either a piecewise expanding unimodal map or  an S-unimodal map satisfying the Collet-Eckmann condition with turning point $c$. Let $K\subset (a,b)$ be a non-empty compact set with $TK=K$. If $c\notin K$, then there exists $\mcli$ such that $(K,\mcli)$ is an  admissible pair.
\end{lem}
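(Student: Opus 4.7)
The plan is to take $\mathcal I$ to be the family of connected components of $X \setminus F$ that meet $K$, where $F$ is a carefully constructed finite, $T$-forward invariant, sufficiently dense ``barrier set'' disjoint from $K$, built from backward orbits of a periodic point whose whole orbit misses $K$.

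The first step is to produce such a periodic point. By Lemma~\ref{lem:finite renorm} periodic points are dense in $Y$, and since $c \in Y \setminus K$ the set $Y \setminus K$ is nonempty and open, so I can choose a periodic $z \in Y \setminus K$. Because $TK \subset K$, if $T^i z \in K$ for some $i$ then $z = T^{p-i}(T^i z) \in K$ (with $p$ the period), so the entire orbit $\mathcal O_z$ avoids $K$. Set
\[
\eta_0 := \min\{\dist(c, K),\ \dist(\{a,b\}, K),\ \dist(\mathcal O_z, K)\} > 0.
\]
Using that $\bigcup_n T^{-n}(z)$ is dense in $X$, I pick $m$ at least the period of $z$ and large enough that $E_m := \bigcup_{0 \le k < m} T^{-k}(z)$ is $\eta_0/12$-dense, and put $F := E_m \cup \{a, b\}$. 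Then $F$ is finite, $TF \subset F$, and $F \cap K = \emptyset$ (any preimage of $z$ lying in $K$ would force $z \in K$ by forward-invariance). Let $\mathcal I$ consist of the components of $X \setminus F$ meeting $K$; each has length at most $\eta_0/6$, so $\bigcup \overline{\mathcal I}$ sits in the $\eta_0/6$-neighborhood of $K$. In particular $c \notin \bigcup \mathcal I$, and Lemma~\ref{lem:Markov} applies, since endpoints of the $I$'s lie in $F$, map into $F$ under $T$, and $F$ is disjoint from $\bigcup \mathcal I$.

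It remains to verify that $(K, \mathcal I)$ is admissible. Openness of $T|_\Lambda$ comes from Lemma~\ref{lem:open} because $c \notin \bigcup \mathcal I$; expansion comes from Lemma~\ref{lem:expshrink} because $c \notin \Lambda \subset \bigcup \mathcal I$; surjectivity follows from the Markov property together with $TK = K$ (every $J \in \mathcal I$ meets $K$, so $K = TK$ yields an $I \in \mathcal I$ with $TI \cap J \ne \emptyset$, hence $TI \supset J$, giving a $\Lambda$-preimage of any prescribed $y \in J \cap \Lambda$). The step I expect to be the main obstacle is compactness of $\Lambda$, which is a priori only a $G_\delta$. My plan is to show $\Lambda = \Lambda'$ where $\Lambda' := \bigcap_n T^{-n}(\overline{\bigcup \mathcal I})$ is closed: if $x \in \Lambda' \setminus \Lambda$ then some $T^n x$ lies in $F \cap \overline{\bigcup \mathcal I}$, after which every iterate stays in the finite set $F$ and the orbit eventually enters a periodic cycle of $T|_F$; but any periodic point of $T$ inside $F$ must lie in $\mathcal O_z \cup \{a, b\}$ (a strict preimage of $z$ in $E_m$ that were periodic would have an orbit containing $\mathcal O_z$ of equal cardinality, hence equal to $\mathcal O_z$), and by the choice of $\eta_0$ both sets sit at distance $\ge 5\eta_0/6$ from $\bigcup \mathcal I$, contradicting $T^m x \in \overline{\bigcup \mathcal I}$ for large $m$.
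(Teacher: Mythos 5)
Your proof is correct and follows essentially the same strategy as the paper's: choose a periodic point $z$ with dense backward orbit disjoint from $K$, build a finite $T$-forward-invariant barrier from $E_m=\bigcup_{k<m}T^{-k}z$, let $\mcli$ be the complementary components meeting $K$, and then invoke Lemma~\ref{lem:Markov} together with Lemmas~\ref{lem:open} and~\ref{lem:expshrink}, deriving surjectivity from $TK=K$. The only substantive divergence is in how one guarantees $c$, $\mclo_z$, and $\{a,b\}$ stay away from $\overline{\bigcup\mcli}$ so that $\Lambda$ is compact: the paper cuts explicit small closed intervals (with endpoints in $E_m$) around $c$ and each point of $\mclo_z$ and restricts to a proper subinterval $(a_2,b_2)$, so that every endpoint of an $I\in\mcli$ eventually maps into $\mclo_z$, which lies in the interior of the excised set and hence misses $\overline{\bigcup\mcli}$; you instead make $E_m$ quantitatively $\eta_0/12$-dense so that all components meeting $K$ are short, forcing $\overline{\bigcup\mcli}$ into a small neighborhood of $K$, and then close the compactness argument by tracking eventually-periodic orbits inside the finite set $F$. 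Both are correct; the paper's version avoids any metric estimate at the cost of a more elaborate choice of excised set, while yours keeps $F$ a plain finite set at the cost of the $\eta_0$-bookkeeping.
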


\begin{proof}
 Since $K$ is compact and $c\notin K$, by Lemma~\ref{lem:finite renorm}, there exists a periodic point $z\notin K$ such that $\cup_{k\ge 0}T^{-k}z$ is dense in $X$. Denote the orbit of $z$ by $z_1,\cdots,z_p$. Then $(\cup_{k\ge 0}T^{-k}z)\cap K=\emptyset$ and in particular $z_1,\cdots,z_p\notin K$.

 By choosing $m$ large, for $E_m:=\cup_{0\le k<m}T^{-k}z$,  $TE_m\subset E_m$, $E_m\cap K=\emptyset$ and the following hold: there exist  $a_1,b_1,a_2,b_2\in E_m$ and $x_i,y_i\in E_m, 1\le i\le p$, such that $c\in (a_1,b_1)$, $[a_2,b_2]\subset (a,b)$, $z_i\in (x_i,y_i), 1\le i\le p$ and $K\subset (a_2,b_2)\setminus F$ for $F:= [a_1,b_1]\cup (\cup_{i=1}^p[x_i,y_i])$. Let $\mcli_0$ be the collection of connected components of $(a_2,b_2)\setminus (E_m\cup F)$ and let $\mcli=\{I\in\mcli_0: I\cap K\ne\emptyset\}$. 
 
 By the definition of $\mcli$, for $U:=\cup_{I\in\mcli} I$, $U\supset K$. Moreover, for any $J\in\mcli$, under iteration of $T$, both endpoints of $J$ will be eventually mapped to the orbit of $z$, which does not intersect $\overline{U}=\cup_{I\in\mcli}\overline{I}$. Therefore, for $\Lambda:=\cap_{n\ge 0} T^{-n} U$, $\Lambda=\cap_{n\ge 0} T^{-n}\overline{U}$ is compact.
 
 To show $T:\Lambda\to\Lambda$ is surjective, it suffices to show that $T U\supset U$. First note that by Lemma~\ref{lem:Markov}, for any $I,J\in\mcli$, $TI\cap J\ne\emptyset$ implies that $TI\supset J$. Now given $J\in\mcli$, since $J\cap K\ne\emptyset$, $TK=K$ and $U=\cup_{I\in\mcli} I\supset K$, there exists $I\in \mcli$ such that $TI\cap J\ne\emptyset$, which implies that $TI\supset J$. The conclusion follows.
 

Finally, since $c\notin U$ and since $\Lambda=\cap_{n\ge 0} T^{-n} U$, from Lemma~\ref{lem:open} we know that $T:\Lambda\to\Lambda$ is an open map; from our assumption on $T$ and Lemma~\ref{lem:expshrink} we know that $T$ is expanding on $\Lambda$.
\end{proof}

Given $\phi_*\in Lip(X)$,  according to Proposition~\ref{prop:subord}, there exists an open neighborhood $\mclv_*$ of $\phi_*$ in $Lip(X)$ and $C_*>0$ such that for any $\phi\in \mclv_*$ we have $\gamma(\phi)\le C_*$. For each $\phi\in\mclv_*$, define
\[
\whK_\phi:=\{x\in X: |\mcls_k \phi(T^l x) - k\beta(\phi)|\le C_*,\forall k\ge 1,l\ge 0\}\ \ \mbox{and}\ \ K_\phi:=\cap_{m=0}^\infty T^m \whK_\phi.
\]

\begin{lem}\label{lem:u.s.c.}
 The following hold.
 \begin{itemize}
   \item [(1)] For each $\phi\in \mclv_*$,  $K_\phi$ is compact and  $T K_\phi=K_\phi$; moreover, $\mu\in\mclm_T$ maximizes $\phi$ iff $\supp\mu\subset K_\phi$.
   \item [(2)]  The map $\phi\mapsto K_\phi$ defined on $\mclv_*$ above is upper semi-continuous in the following sense: given $\phi\in \mclv_*$ and an open neighborhood $V$ of $K_\phi$ in $X$, there exists an open subset $\mclv$ of $\mclv_*$ such that $\phi\in\mclv$ and $K_\varphi\subset V$ for any $\varphi\in \mclv$.
 \end{itemize}
\end{lem}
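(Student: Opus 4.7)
My plan is to dispatch the two parts separately: part~(1) by direct unpacking of the definitions combined with Lemma~\ref{lem:subord}, and part~(2) by a contradiction argument built on backward pre-orbits.

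For part~(1), I would first observe that $\whK_\phi$ is defined by an intersection of closed conditions (each of the maps $x\mapsto\mcls_k\phi(T^l x)-k\beta(\phi)$ is continuous), so $\whK_\phi$ is closed in the compact space $X$ and hence compact. A direct substitution $l\mapsto l+1$ yields $T\whK_\phi\subset\whK_\phi$, which makes $\bigl(T^m\whK_\phi\bigr)_{m\geq 0}$ a nested sequence of compacta and $K_\phi$ compact. The inclusion $TK_\phi\subset K_\phi$ is immediate; for the reverse, given $y\in K_\phi$, the sets $T^{-1}\{y\}\cap T^m\whK_\phi$ form a nested sequence of non-empty compacta (non-emptiness from $y\in T^{m+1}\whK_\phi$), whose intersection provides a preimage in $K_\phi$. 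For the characterization of maximizing measures, if $\mu$ maximizes $\phi$, then the upper bound $\mcls_k\phi(z)-k\beta(\phi)\leq\gamma(\phi)\leq C_*$ holds for every $z\in X$ by the very definition of $\gamma$, while Lemma~\ref{lem:subord} furnishes the matching lower bound on $\supp\mu$; combined with the standard equality $T(\supp\mu)=\supp\mu$ for invariant probabilities on a compact metric space, this yields $\supp\mu\subset T^m\whK_\phi$ for every $m$. Conversely, if $\supp\mu\subset K_\phi$, then taking $l=0$ in the definition of $\whK_\phi$ forces $\mcls_k\phi(x)/k\to\beta(\phi)$ uniformly on $\supp\mu$, and $\int\phi\,d\mu=\beta(\phi)$ follows from bounded convergence combined with invariance.

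For part~(2), I fix $\phi\in\mclv_*$ and an open $V\supset K_\phi$, and argue by contradiction: suppose instead that there exist $\varphi_n\to\phi$ in $Lip(X)$ together with $x_n\in K_{\varphi_n}\setminus V$. Since $TK_{\varphi_n}=K_{\varphi_n}$ by part~(1), one can select a full backward orbit $(x_n^{(-k)})_{k\geq 0}\subset K_{\varphi_n}$ with $x_n^{(0)}=x_n$ and $Tx_n^{(-k-1)}=x_n^{(-k)}$. A diagonal extraction in the compact space $X$ then produces points $x^{(-k)}\in X$ satisfying $Tx^{(-k-1)}=x^{(-k)}$ with $x^{(0)}\in X\setminus V$. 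Each $x_n^{(-k)}$ lies in $\whK_{\varphi_n}$; since $\beta(\cdot)$ is continuous on $Lip(X)$ and each map $(\varphi,x)\mapsto\mcls_j\varphi(T^l x)$ is jointly continuous, passing to the limit in the defining inequalities of $\whK_{\varphi_n}$ gives $x^{(-k)}\in\whK_\phi$ for every $k\geq 0$. Consequently $x^{(0)}=T^m x^{(-m)}\in T^m\whK_\phi$ for every $m\geq 0$, so $x^{(0)}\in\bigcap_m T^m\whK_\phi=K_\phi\subset V$, contradicting $x^{(0)}\notin V$.

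The main obstacle I anticipate is part~(2). The naive attempt---picking $M$ with $T^M\whK_\phi\subset V$ (possible since the $T^m\whK_\phi$ are nested compacta with intersection contained in $V$) and trying to transfer to $T^M\whK_\varphi\subset V$ for nearby $\varphi$---fails, because although $\whK_\varphi$ itself is upper semi-continuous in $\varphi$, this control does not propagate through the forward-image truncation $T^M\whK_\varphi$. The backward pre-orbit trick sidesteps the issue: the existence of arbitrarily long backward chains inside $\whK_\varphi$ is a condition that is preserved under $\varphi_n\to\phi$ via diagonal extraction in the compact space $X$, and it is precisely this condition that characterises membership of $K_\phi$.
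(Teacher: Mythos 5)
Your proof is correct and takes essentially the same route as the paper: part~(1) unpacks the nested-compacta argument and the characterization of maximizing measures via the trivial upper bound from $\gamma(\phi)\le C_*$ together with the lower bound from Lemma~\ref{lem:subord}, and part~(2) passes to the limit along backward preimages. The only cosmetic difference in part~(2) is that you build a full backward chain and diagonalize, whereas the paper fixes $m\ge 0$, extracts a single preimage $y_n\in T^{-m}x_n\cap\whK_{\phi_n}$, passes to a subsequence, and then lets $m$ range — the same idea in a marginally lighter form.
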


\begin{proof}
(1).  By definition,  $K_\phi$ is compact, $T K_\phi=K_\phi$ and for $\mu\in\mclm_T$, if $\supp\mu\subset K_\phi$, then $\mu$ maximizes $\phi$. On the other hand, from $\gamma(\phi)\le C_*$ and Lemma~\ref{lem:subord} we know that $K_\phi$ contains the support of each maximizing measure of $\phi$. 

(2).  Given $\phi\in\mclv_*$, it suffices to show that the following hold: for $\phi_n\in \mclv_*$ with $\lim_n \phi_n=\phi$ in $Lip(X)$ and for $x_n\in K_{\phi_n}$ with $\lim_n x_n=x\in X$, we always have $x\in K_\phi$. Since $K_\phi=\cap_{m\ge 0}T^m\whK_\phi$, fixing an arbitrary $m\ge 0$, we have to show that $x\in T^m\whK_\phi$.  For each $n$, since $x_n\in K_{\phi_n}\subset T^m\whK_{\phi_n}$, there exists $y_n\in T^{-m} x_n\cap \whK_{\phi_n}$. By taking a subsequence if necessary, we may assume that $\lim_n y_n=y$ for some $y\in X$. Then $x=T^m y$, so we only need to show that $y\in \whK_\phi$. To this end, by the definition of $\whK_\phi$, fixing arbitrary $k\ge 1,l\ge 0$, it suffices to show that $|\mcls_k\phi(T^ly)-k\beta(\phi)|\le C_*$. This follows immediately from $\lim_n \phi_n=\phi$, the continuity of $\beta(\cdot)$, $\lim_n y_n=y$ and $|\mcls_k\phi_n(T^ly_n)-k\beta(\phi_n)|\le C_*$. 
\end{proof}

Now we are ready to prove  Proposition~\ref{prop:reducible}.

\begin{proof}[Proof of Proposition~\ref{prop:reducible}]
Given $\phi_*\in Lip(X)$, let us follow the notations $C_*$, $\mclv_*$ associated to $\phi_*$ and $K_\phi$ for $\phi\in\mclv_*$ introduced previously. According to Definition~\ref{defn:reducible}, it suffices to find a dense subset $\mcld$ of $\mclv_*$ such that for any $\varphi_*\in \mcld\setminus\mclp_T$, there exists an open neighborhood $\mclv$ of $\varphi_*$ and $\Lambda\subset X$ with the desired properties stated in Definition~\ref{defn:reducible}. To this end, denote $X=[a,b]$, let $c$ be the turning point of $(X,T)$ and let 
\[
\mcld:=\{ \phi\in \mclv_*: \mbox{either $c\notin K_{\phi}$ or $\phi\in\mclp_T$} \}.
\]
We claim that $\mcld$ is dense in $\mclv_*$. To prove this claim, let us deal with the following three cases separately.

{\bf Case (i)}  $(\omega(c),T)$ is not uniquely ergodic. Let $\mclu_T$ be the collection of $\phi\in Lip(X)$ such that $\phi$ has a unique maximizing measure. 
As is well-known, $\mclu_T$ is dense in $Lip(X)$ (see, for example, \cite[Theorem~2.4]{Jen19}). On the other hand, since $(\omega(c),T)$ is not uniquely ergodic, by assertion~(1) in Lemma~\ref{lem:u.s.c.}, for each $\phi\in\mclu_T\cap\mclv_*$, $c\notin K_\phi$. It follows that $\mcld$ is dense in $\mclv_*$.

{\bf Case (ii)} $\omega(c)$ contains a periodic orbit. In this case $\mcld=\mclv_*$ by definition and assertion~(1) in Lemma~\ref{lem:u.s.c.}.

{\bf Case (iii)} $c\notin \omega(c)$.  Let $\phi\in \mclv_*$ and let $\mclw\subset\mclv_*$ be an open neighborhood of $\phi$. We have to show $\mclw\cap \mcld\ne\emptyset$.
  We may assume that $c\in K_{\phi}$ and $\phi\notin\mclp_T$, because otherwise there is nothing to prove. These assumptions together with assertion~(1) in Lemma~\ref{lem:u.s.c.} imply that  $\omega(c)$ contains no periodic point; in particular, $\omega(c)\subset (a,b)$. 
  Then applying Lemma~\ref{lem:admissible Markov} to $K=\omega(c)$,  we can find a compact $\Lambda\supset \omega(c)$ such that $T:\Lambda\to\Lambda$ is surjective, open, and expanding. To proceed, let $\delta>0$ be such that  $\phi_{\ve,\mclo}:=\phi-\ve\cdot\dist(\cdot,\mclo)\in\mclw$ for any $0<\ve\le\delta$ and any periodic orbit $\mclo$.
  Now applying Theorem~\ref{thm:Contreras} to the subsystem $(\Lambda,T)$ and the restriction of $\phi$ to $\Lambda$, for any $0<\ve\le \delta$ there exists a periodic orbit 
  $\mclo_\ve\subset \Lambda$ such that restricted to the subsystem $(\Lambda,T)$, $\varphi_\ve:=\phi_{\ve,\mclo_{\ve}}\in\mclw$ is uniquely maximized by $\mclo_\ve$. Since $\omega(c)$ is a compact invariant subset of $\Lambda$ and since it contains no periodic point, it follows that any weak-* limit point of $\frac{1}{n}\sum_{k=0}^{n-1}\delta_{T^k c}$ 
 cannot be a maximizing measure of $\varphi_\ve$, and hence $c\notin K_{\varphi_\ve}$. The conclusion in this case follows.

 Now given $\varphi_*\in \mcld\setminus\mclp_T$, let us complete the proof. By definition, $c\notin K_{\varphi_*}$ and $K_{\varphi_*}\subset(a,b)$. Therefore, we can apply  Lemma~\ref{lem:admissible Markov} to $K=K_{\varphi_*}$ and then find $\mcli$ such that $(K_{\varphi_*},\mcli)$ is an admissible pair. In particular, $\cup_{I\in\mcli} I$ is an open neighborhood of $K_{\varphi_*}$. Then by  assertion~(2) in Lemma~\ref{lem:u.s.c.}, there exists an open neighborhood $\mclv\subset\mclv_*$ of $\varphi_*$ in $Lip(X)$, such that $K_\phi\subset\cup_{I\in\mcli} I$ for any $\phi\in\mclv$. Combining this with $TK_\phi=K_\phi$ we obtain that for $\Lambda=\cap_{n\ge 0} T^{-n}(\cup_{I\in \mcli} I)$, $K_\phi\subset\Lambda, \forall \phi\in\mclv$. Also recall that $T:\Lambda\to\Lambda$ is surjective, open and expanding. The conclusion follows.
\end{proof}

%

\subsection{Proof of Theorem~\ref{thm:exclu}}\label{sse:proof exclu} For either a quadratic map $Q_a$ or a tent map $T_a$, let $c$ denote its turning point below. Let us consider the quadratic case first. For a quadratic map $Q_a:[0,1]\to [0,1]$, a periodic orbit $\mclo$ of $Q_a$ with period $p$ is called {\bf attracting}, if $|(Q_a^p)'(y)|<1$ for  $y\in\mclo$.

\begin{prop}\label{prop:attracting}
Let $a\in (a_*,4]$ and suppose that $Q_a$ has an attracting periodic orbit $\mclo$. Then Lipschitz TPO holds for $(X,Q_a)$.
\end{prop}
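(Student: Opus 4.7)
The plan is to show that $(X, Q_a)$ is reducible in the sense of Definition~\ref{defn:reducible} and then conclude by Lemma~\ref{lem:reducible}. The key structural input, which departs from the setting of Theorem~\ref{thm:unimodal}, is that in a hyperbolic window of the quadratic family the dynamics splits cleanly into the attractor $\mclo$ and a uniformly hyperbolic complement.

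First I would gather the classical one-dimensional facts. Since $Q_a$ is S-unimodal and the endpoint $0$ is a repelling fixed point for $a > 1$, Singer's theorem forces the immediate basin of $\mclo$ to contain the turning point $c = 1/2$; in particular $\omega(c) = \mclo$. Let $B$ denote the full basin of $\mclo$ and put $\Lambda_0 := X \setminus B$: it is compact, forward $Q_a$-invariant, contains neither $c$ nor any non-repelling periodic orbit (by a second application of Singer's theorem, every non-repelling orbit absorbs $c$ and so lies in $B$), and is non-empty since it contains the repelling fixed point $0$. By Ma\~n\'e's hyperbolicity theorem for $C^2$ interval maps with non-flat critical points, $\Lambda_0$ is uniformly hyperbolic, hence $Q_a|_{\Lambda_0}$ is expanding. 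Choosing an open neighborhood $V$ of $\mclo \cup \{c\}$ with $\overline V \subset B$ and $Q_a(\overline V) \subset V$, one verifies $\Lambda_0 = \bigcap_{n \ge 0} Q_a^{-n}(X \setminus \overline V)$, so Lemma~\ref{lem:open} yields that $Q_a : \Lambda_0 \to \Lambda_0$ is open. Finally, let $\Lambda \subset \Lambda_0$ be the non-wandering set of $Q_a|_{\Lambda_0}$; by the spectral decomposition for uniformly hyperbolic sets, $\Lambda$ is a finite disjoint union of basic sets each topologically conjugate to a one-sided subshift of finite type, so $Q_a : \Lambda \to \Lambda$ is simultaneously surjective, open, and expanding, and $\Lambda$ is non-empty (it contains $0$).

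Next I would check the reducibility conditions with respect to this $\Lambda$. Every ergodic $Q_a$-invariant probability measure on $X$ is either $\mu_\mclo$ (the periodic-orbit measure on $\mclo$) or is supported in $\Lambda_0$, and in the latter case, being invariant, is supported in $\Lambda$; hence
\[
\beta(\phi) \;=\; \max\!\Bigl\{\textstyle\int \phi\,d\mu_\mclo,\ \sup_{\nu \in \mclm_{Q_a}(\Lambda)} \int \phi\,d\nu\Bigr\}, \quad \forall\, \phi \in Lip(X).
\]
For any $\phi_* \in Lip(X) \setminus \mclp_{Q_a}$, the measure $\mu_\mclo$ lives on a periodic orbit and so cannot be maximizing (else $\phi_* \in \mclp_{Q_a}$); hence $\int \phi_* \, d\mu_\mclo < \sup_{\nu \in \mclm_{Q_a}(\Lambda)} \int \phi_* \, d\nu = \beta(\phi_*)$. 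Since both terms depend continuously on $\phi$ in the Lipschitz norm, this strict inequality persists on an open neighborhood $\mclv$ of $\phi_*$, which means every maximizing measure of every $\phi \in \mclv$ is supported in $\Lambda$. Taking $\mcld = Lip(X)$ in Definition~\ref{defn:reducible} verifies reducibility, and Lemma~\ref{lem:reducible} delivers Lipschitz TPO for $(X, Q_a)$.

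The main obstacle is the uniform hyperbolicity of $\Lambda_0$ together with the surjective and open structure of its non-wandering set: because $Q_a$ with an attracting periodic orbit fails the Collet--Eckmann condition, Lemma~\ref{lem:expshrink} does not apply and the route through Proposition~\ref{prop:subord} used for Theorem~\ref{thm:unimodal} is unavailable. One must instead invoke the classical hyperbolic-window theory of real quadratic maps---Singer's theorem, Ma\~n\'e's theorem, and spectral decomposition---to extract the uniformly expanding, open, surjective subsystem on which the machinery of \S\ref{sse:Contreras} can be applied. Once that package is in place, the reducibility step is essentially formal and the conclusion follows.
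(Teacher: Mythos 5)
Your proposal follows the paper's overall plan (reducibility via Lemma~\ref{lem:reducible}) and is correct in substance, but the construction of $\Lambda$ takes a genuinely different and more machinery-heavy route. You work with $\Lambda_0 = [0,1]\setminus B$, which is compact, expanding (Ma\~n\'e) and open by Lemma~\ref{lem:open}, but not surjective under $Q_a$ when $a<4$ (e.g.\ the point $1\in\Lambda_0$ has no preimage in $[0,1]$). You repair this by passing to the non-wandering set of $Q_a|_{\Lambda_0}$ and invoking spectral decomposition for one-sided expanding maps, which is a real but comparatively heavy piece of theory. The paper sidesteps this entirely by first restricting to the dynamical core $[Q_a^2(c),Q_a(c)]$ (which is $Q_a$-invariant and onto when $a>1+\sqrt{5}$, and $a_*>1+\sqrt{5}$), then setting $\Lambda := [Q_a^2(c),Q_a(c)]\setminus B$. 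Since $Q_a$ maps the core onto itself and $Q_a^{-1}B=B$, one gets $Q_a\Lambda=\Lambda$ for free, and openness and expansion follow exactly as in your argument. A second, smaller divergence: because the paper's $\Lambda$ excludes the repelling fixed point $0$ (it lies outside the core), the neighborhood $\mclv$ must rule out $\{0\}$ as well as $\mclo$ from being a maximizing orbit; in your version $0\in\Lambda$, so you only need to exclude $\mclo$, which you do. In short, your argument is sound modulo carefully sourcing the spectral decomposition statement for non-invertible distance-expanding maps, but the paper's construction via the core achieves surjectivity elementarily and is to be preferred.
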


\begin{proof}
By Lemma~\ref{lem:reducible} again, let us show that $(X,Q_a)$ is reducible. To this end, given $\phi_*\in Lip(X)\setminus\mclp_T$, 
it suffices to find $\mclv$ and $\Lambda$ as required in Definition~\ref{defn:reducible}. $\mclv$ is determined as follows.  Since $\phi_*\notin \mclp_T$, $\phi_*$ cannot be maximized by either $\{0\}$ or $\mclo$. Then by continuity, there exists an open neighborhood $\mclv$ of $\phi_*$ such that the same holds for $\phi\in\mclv$ instead of $\phi_*$. 

Now let us define $\Lambda$. To begin with, let $X:=[Q_a^2(c),Q_a(c)]$. Note that $a_*>1+\sqrt{5}$ and direct calculations show that $c\in X$ and $Q_a X=X$ when $a\in (1+\sqrt{5},4]$. Then let $B=\{x\in [0,1]:\omega(x)=\mclo\}$ be the basin of attraction of $\mclo$. It is well-known that $B$ is an open subset of $(0,1)$, $\mclo\subset B$, $c\in B$, and $Q_a^{-1}B =B$. Let $\Lambda:=X\setminus B$. By definition, $\Lambda$ is compact. Since $Q_a:X\to X$ is surjective and since $Q_a^{-1}B=B$, $Q_a\Lambda=\Lambda$. It is well-known that $Q_a:\Lambda\to\Lambda$ is expanding. 

To see $Q_a:\Lambda\to\Lambda$ is an open map, let $U\subset (Q_a^2(c),Q_a(c))\setminus\{c\}$ be an open neighborhood of $\Lambda$ with $\mclo\cap \overline{U}=\emptyset$. From the choice of $U$ and the definition of $\Lambda$ we know that $\Lambda=\cap_{n=0}^{\infty}Q_a^{-n}U$. Then the conclusion follows from Lemma~\ref{lem:open}.

Finally, note that $\cap_{n=0}^\infty Q_a^n(0,1)\subset X$, so for any $x\in (0,1)$, either $\omega(x)=\mclo$ or $\omega(x)\subset\Lambda$. For each $\phi\in\mclv$, since $\phi$ is not maximized by $\{0\}$ or $\mclo$, it follows that the support of any maximizing measure of $\phi$ is contained in $\Lambda$. The proof is done.
\end{proof}

\begin{proof}[Proof of Theorem~\ref{thm:exclu}~(1)]
It is well-known that $Q_a:[0,1]\to [0,1]$ is an S-unimodal map. Let $A$ be the collection of $a\in [a_*,4]$ such that $Q_a$ has an attracting periodic point. Due to Proposition~\ref{prop:attracting}, we only need to consider $a\in [a_*,4]\setminus A$. According to Avila and Moreira \cite{AM05p,AM05a}, for Lebesgue almost every $a\in [a_*,4]\setminus A$, $Q_a$ satisfies the Collet-Eckmann condition (see \cite[Theorem~A]{AM05a}) and $\omega(c)$ is the union of finitely many closed intervals (see \cite[Theorem~2]{AM05p}).  Then the conclusion follows from Theorem~\ref{thm:unimodal}. 
\end{proof}

\begin{proof}[Proof of Theorem~\ref{thm:exclu}~(2)]
By definition, for each $a\in (1,2]$, $T_a:[0,2]\to [0,2]$ is a piecewise expanding unimodal map. On the other hand, according to Brucks and Misiurewicz \cite{BM96} or Bruin \cite{Bru98}, for Lebesgue a.e. $a\in (1,2]$, $\omega(c)$ is the union of finitely many closed intervals. Then the conclusion follows from Theorem~\ref{thm:unimodal}. 
\end{proof}

\section{On non-uniformly expanding circle covering maps}\label{se:circle}

As a supplement of Theorem~\ref{thm:unimodal}, in this section we show that Lipschitz TPO holds for a different kind of one-dimensional non-uniformly expanding maps: circle covering maps that are expanding away from an indifferent fixed point. The main result in this section is the following.

\begin{thm}\label{thm:circle}
  Let $X=\mbbr/\mbbz$ be the unit circle. Let $T:X\to X$ be a covering map with a fixed point $c\in X$. Assume that $T$ is a Lipschitz map, and for any open neighborhood $V$ of $c$, there exists $C>1$ such that $|T'(x)|\ge C$ for Lebesgue a.e. $x\in X\setminus V$. Then Lipschitz TPO holds for $(X,T)$.
\end{thm}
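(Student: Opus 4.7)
My plan is to mirror the strategy for Theorem~\ref{thm:unimodal}: first establish a locally uniform subordination bound of the form $\gamma(\phi)\le C\cdot\lip(\phi)$, then prove that $(X,T)$ is reducible in the sense of Definition~\ref{defn:reducible}, and finally conclude via Lemma~\ref{lem:reducible}.

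The first step, paralleling Proposition~\ref{prop:subord}, is to produce a constant $C>0$ depending only on $T$ with $\gamma(\phi)\le C\cdot\lip(\phi)$ for every $\phi\in Lip(X)$. Given an orbit segment $x,Tx,\dots,T^{n-1}x$ and a small $V=B(c,\ve)$, I would decompose it into outer pieces (in $X\setminus V$) and inner pieces (excursions into $V$). On outer pieces, the hypothesis $|T'|\ge C_0>1$ a.e.\ on $X\setminus V$ yields the exponential shrinking estimate of Lemma~\ref{lem:expshrink}, and together with the covering property it provides a compact $T$-invariant subset of $X\setminus V$ on which $T$ is surjective, open and expanding; consequently the argument of Case~(ii) in the proof of Proposition~\ref{prop:subord} shadows each outer piece by a periodic orbit and bounds its contribution by $O(\lip(\phi))$. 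On inner pieces, since $\delta_c\in\mclm_T$ one has $\phi(c)\le\beta(\phi)$, whence
\[
\phi(T^k x)-\beta(\phi)\le[\phi(c)-\beta(\phi)]+\lip(\phi)\cdot\dist(T^k x,c)\le\lip(\phi)\cdot\dist(T^k x,c),
\]
and the excursion contribution reduces to controlling $\sum_k\dist(T^k x,c)$ over a single excursion; I would bound this by pulling the excursion back from its exit point on $\partial V$ along the branch of $T^{-1}$ that traps $c$, and applying exponential shrinking of preimages to turn the sum into a convergent geometric series.

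For the reducibility step, I would reuse the setup $\mclv_*$, $C_*$, $\whK_\phi$, $K_\phi$ from the proof of Proposition~\ref{prop:reducible}. Since $\omega(c)=\{c\}$ is itself a periodic orbit of $T$, the reasoning of Case~(ii) of that proof applies verbatim: for any $\phi\in\mclv_*$, either $c\notin K_\phi$, or $c\in K_\phi$, and then $\{c\}\subset K_\phi$ forces $\delta_c$ to be a maximizing measure of $\phi$ via Lemma~\ref{lem:u.s.c.}(1), placing $\phi$ in $\mclp_T$. Taking $\mcld=\mclv_*$, for $\phi_*\in\mcld\setminus\mclp_T$ we have $c\notin K_{\phi_*}$, and I would construct an admissible pair $(K_{\phi_*},\mcli)$ by a circle adaptation of Lemma~\ref{lem:admissible Markov}: pick $\delta>0$ with $\overline{B(c,\delta)}\cap K_{\phi_*}=\emptyset$, choose a periodic orbit $\mclo$ of $T$ disjoint from $K_{\phi_*}\cup\overline{B(c,\delta)}$ whose preimages are dense in $X$ (available from topological exactness of $T$ on any compact invariant subset of $X\setminus\overline{B(c,\delta)}$), set $E_m=\cup_{0\le k<m}T^{-k}\mclo$ for large $m$, and let $\mcli$ be the collection of open arcs in $X\setminus(E_m\cup\overline{B(c,\delta)})$ that meet $K_{\phi_*}$. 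Surjectivity, openness (via Lemma~\ref{lem:open}), and uniform expansion of $T:\Lambda\to\Lambda$ for $\Lambda=\cap_{n\ge 0}T^{-n}(\cup_{I\in\mcli}I)$ then follow as in the unimodal case, and Lemma~\ref{lem:u.s.c.}(2) supplies the open neighborhood $\mclv\subset\mclv_*$ of $\phi_*$ required in Definition~\ref{defn:reducible}.

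The main technical obstacle is the uniform excursion estimate in the first step: without a specific local model for $T$ at the indifferent fixed point $c$ beyond the Lipschitz and expansion-outside-$c$ hypotheses, the bound $\sum_k\dist(T^k x,c)\le\text{const}$ over a single excursion is not automatic. I expect the key is that within a single excursion the orbit crosses $\partial V$ exactly once (at the exit time), so the earlier iterates are preimages, along the branch attached to the exit point, of a fixed bounded interval on $\partial V$ where $T$ is uniformly expanding; the exponential shrinking of pullbacks should then make the sum telescope into a geometric series. Should this abstract estimate fail in full generality, a fallback would restrict attention to $\phi_*\notin\mclp_T$, where $\phi_*(c)<\beta(\phi_*)$ strictly and the gap can locally absorb $\lip(\phi)\cdot\dist(T^k x,c)$, which should still suffice to establish reducibility and hence Lipschitz TPO.
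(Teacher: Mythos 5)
Your reducibility framework and the fallback at the end of your proposal are the right ideas, but the first step as stated would fail, and the key ingredient the paper introduces to close the argument is not in your sketch. A uniform bound $\gamma(\phi)\le C\lip(\phi)$ for all $\phi\in Lip(X)$ cannot hold in the generality of Theorem~\ref{thm:circle}: it would force $\gamma(\phi)<+\infty$, hence a bounded sub-action, for every Lipschitz $\phi$, and this is false for intermittent circle maps precisely when the maximizing measure is $\delta_c$ (Morris \cite[Theorem~2]{Mor09}, Garibaldi--Inoquio-Renteria \cite[Theorem~1]{GI20}). The concrete obstruction is exactly the one you flag: near the indifferent fixed point $c$, the branch of $T^{-1}$ trapping $c$ is the \emph{slow} branch, so pullbacks along it contract only subexponentially, and ``$\sum_k\dist(T^kx,c)$ telescopes into a geometric series'' is false. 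Hence the paper works only with $\phi$ satisfying $\beta(\phi)-\phi(c)>0$; as you correctly note, this holds automatically for $\phi\notin\mclp_T$ since $\{c\}$ is a periodic orbit, and reducibility only requires the argument on $\mcld\setminus\mclp_T$.

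Even with the gap, however, your outer/inner decomposition does not automatically close: each outer block contributes $O(\lip(\phi))$ via Markov shadowing, and the number of alternations between $V$ and $X\setminus V$ along an orbit of length $n$ is unbounded in $n$, so you would need the negative inner contributions to dominate the positive outer contributions, which requires quantitative control on excursion frequencies that the hypotheses do not obviously supply. The paper circumvents the decomposition entirely by working with the natural candidate sub-action $\psi(x)=\sup_{n}\max_{y\in T^{-n}x}[\mcls_n\phi(y)-n\beta(\phi)]$ and the frequency estimate of Claim~\ref{clm:freq}: pulling back through a good periodic orbit $y_*$ far from $c$ gives $\psi_n(x)\ge -3n\ve$, while (thanks to the gap) each step in $\overline{V_s}$ costs at least $4\ve$, so the \emph{optimal} pullback $y$ realizing $\psi_n(x)$ must spend a definite fraction $\eta$ of its time outside $\overline{V_s}$. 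This restores the uniform contraction $\dist(T^ky,T^ky')\le\lambda_s^{\eta(n-k)}\dist(x,x')$ along the optimal pullback (Claim~\ref{clm:lip diff}) and yields $\lip(\psi_n)\le L$ uniformly over a neighborhood of $\phi_*$. Once that is in hand your second step works; the paper then uses the contact set $Z_\phi$ of the sub-action $\psi$ to locate the expanding $\Lambda$, but your $K_\phi$ route is essentially equivalent.
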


\begin{rmk}
  The assumption on $(X,T)$ in Theorem~\ref{thm:circle} implies that $|\deg T|\ge 2$, where $\deg T$ denotes the degree of $T$ as a covering map of the unit circle. 
\end{rmk}

\subsection{Proof of Theorem~\ref{thm:circle}}

By Lemma~\ref{lem:reducible} again, we only need to show that $(X,T)$ is reducible, which can be easily deduced from the following.

\begin{prop}\label{prop:sub}
 Let $(X,T)$ be as given in Theorem~\ref{thm:circle} and let  $\phi\in Lip(X)$. If $\phi(c)<\beta(\phi)$, then 
\begin{equation}\label{eq:sub}
    \psi(x):=\sup_{n\ge 1} \max_{y\in T^{-n} x} [\mcls_n \phi(y)-n\beta(\phi)], \quad \forall\,x\in X
\end{equation}
is a Lipschitz sub-action of $\phi$. Moreover, for any $\phi_*\in Lip(X)$ with $\phi_*(c)<\beta(\phi_*)$, there exists an open neighborhood $\mclv$ of $\phi_*$ in $Lip(X)$ and $L>0$ such that for any $\phi\in\mclv$, the associated $\psi$ to $\phi$ defined by \eqref{eq:sub} is real-valued and $\lip(\psi)\le L$. 
\end{prop}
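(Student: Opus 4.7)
The plan is to establish the three claims about $\psi$---finiteness, the sub-action inequality, and the Lipschitz bound---in a way that is locally uniform in $\phi$ near $\phi_*$. By continuity of $\beta(\cdot)$ on $Lip(X)$ together with the strict gap $\phi_*(c)<\beta(\phi_*)$, I would first fix an open neighborhood $\mclv$ of $\phi_*$ in $Lip(X)$, a uniform Lipschitz bound $L_0$, a gap $\eta>0$, an open neighborhood $V\ni c$, and an expansion constant $C>1$, so that for every $\phi\in\mclv$ one has $\lip(\phi)\le L_0$, $\phi\le\beta(\phi)-\eta$ on $\overline V$, and $|T'|\ge C$ almost everywhere on $X\setminus V$. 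The freedom to shrink $V$ will be used at the quantitative step below.

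Once $\psi$ is known to be real-valued, the sub-action inequality is purely formal: for any $x\in X$, $n\ge 1$ and $y\in T^{-n}(x)$, one has $y\in T^{-(n+1)}(Tx)$ with $\mcls_{n+1}\phi(y)=\mcls_n\phi(y)+\phi(x)$, so
\[
\psi(Tx) \;\ge\; \mcls_{n+1}\phi(y)-(n+1)\beta(\phi) \;=\; [\mcls_n\phi(y)-n\beta(\phi)] + [\phi(x)-\beta(\phi)];
\]
taking the supremum over $n$ and $y$ yields $\phi(x)\le\psi(Tx)-\psi(x)+\beta(\phi)$. Thus the entire content of the proposition is the uniform finiteness of $\psi$ and the uniform Lipschitz bound.

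For the uniform upper bound $\gamma(\phi)\le K$, I would decompose any orbit $y_0,\ldots,y_{n-1}$ with $T^n y_0=x$ according to visits to $V$: let $W:=X\setminus V$, let $N_e$ be the number of maximal excursions of the orbit in $W$, and let $M_V$ be the total length of stays inside $V$. Each step inside $V$ contributes at most $-\eta$ to the excess $\mcls_n\phi(y_0)-n\beta(\phi)$. For each length-$l$ excursion lying in the uniformly expanding set $W$, a Ma\~n\'e-type comparison with a nearby periodic orbit (built from the Markov-like structure analogous to Lemma~\ref{lem:Markov} together with density of periodic points in $X\setminus\{c\}$), combined with the exponential shrinking inequality $\sum_{k=0}^{l-1}|T^k J|\le |T^lJ|/(1-C^{-1})$ coming from $|T'|\ge C$ on $W$, bounds the per-excursion contribution by a constant $M_0=M_0(L_0,C,V)$ independent of $l$. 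Choosing $V$ so that $M_0\le\eta$, the combinatorial inequality $N_e\le M_V+1$ then gives
\[
\mcls_n\phi(y_0)-n\beta(\phi) \;\le\; N_eM_0 - \eta M_V \;\le\; M_0 + (M_0-\eta)M_V \;\le\; M_0,
\]
so $\gamma(\phi)\le M_0$ uniformly on $\mclv$, which in turn gives $\psi\le M_0$. The lower bound $\psi(x)\ge\phi(y)-\beta(\phi)\ge -L_0\cdot\diam(X)-|\beta(\phi)|$ for any $y\in T^{-1}x$ finishes the real-valuedness. The Lipschitz estimate then follows standardly: near-optimal pre-orbits of $x$ can visit $V$ at most $\lceil 2M_0/\eta\rceil$ times (as each visit costs $\eta$ while the total excess is bounded by $M_0$), and for $x,x'$ close the shadowing pre-orbit of $x'$ constructed step-by-step via inverse branches contracts at rate $1/C$ outside $V$ and expands only by a bounded amount during the finitely many $V$-steps, yielding $|\psi(x)-\psi(x')|\le L\,\dist(x,x')$ for a uniform $L$.

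The hard part I foresee is the quantitative compatibility $M_0\le\eta$: as $V$ shrinks, $\eta$ grows toward $\beta(\phi_*)-\phi_*(c)>0$, but the expansion $C$ on $X\setminus V$ may simultaneously deteriorate toward $1$, making the naive per-excursion bound $M_0\sim L_0/(C-1)$ blow up. Pushing the argument through will likely require a sharper excursion estimate---for instance by inducing to the first-return map on $W$, whose induced system is uniformly expanding with controlled distortion and a naturally non-positive induced potential---or a careful balancing of $V$ against the Lipschitz norm $L_0$ that uses the strict gap in an essential way.
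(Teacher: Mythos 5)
Your approach has a genuine gap, and you correctly diagnose it yourself in the final paragraph: the requirement $M_0\le\eta$ is not established, and the naive per-excursion bound $M_0\sim L_0/(C-1)$ indeed blows up as the neighborhood $V$ shrinks. The combinatorial inequality $\mcls_n\overline\phi(y_0)\le N_eM_0-\eta M_V\le M_0+(M_0-\eta)M_V$ is only useful if each $V$-step costs more than each $W$-excursion gains, and there is no reason this balance can be arranged by shrinking $V$; as you observe, the two quantities move in opposite directions and the gap $\beta(\phi_*)-\phi_*(c)$ is a bounded amount of ``budget.'' Absent the inducing argument (which you sketch but do not carry out), the proof of finiteness and of the Lipschitz bound does not close. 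In addition, the claim that near-optimal pre-orbits visit $V$ at most $\lceil 2M_0/\eta\rceil$ times presupposes both $M_0\le\eta$ and a uniform \emph{lower} bound on $\psi_n$, neither of which is established at that point in your argument.

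The paper's proof avoids this obstruction entirely by proving a weaker and more robust \emph{frequency} estimate rather than a per-step domination. It first fixes a periodic point $y_*$ away from $c$ with $\frac1p\mcls_p\phi(y_*)\ge\beta(\phi)-2\ve$ and uses a backward orbit shadowing $y_*$ to get the \emph{linear-in-$n$} lower bound $\psi_n(x)\ge-3n\ve$ (not a constant bound; that comes only at the very end). Comparing this with the upper bound $\psi_n(x)\le-4(n-m)\ve+mC_*$, where $m$ counts steps outside $\overline{V_s}$, yields $m\ge\eta n$ for $\eta=\frac{\ve}{C_*+4\ve}$; this works for \emph{any} fixed small neighborhood $V_s$ of $c$ with no compatibility condition. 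This frequency estimate then feeds the exponential contraction along cylinders (Claim~\ref{clm:lip diff}), which gives the Lipschitz bound directly; the constant upper bound $\psi_n\le sC_*+L$ is then obtained a posteriori by comparing the optimizing pre-orbit with a \emph{periodic} pre-orbit in the same cylinder $J\in\mcli_{n+s}$, whose Birkhoff sum of $\overline\phi$ is trivially nonpositive. So the logic is frequency $\Rightarrow$ Lipschitz $\Rightarrow$ finiteness, precisely the opposite of your ordering, and it never needs each $V$-step to be ``paid for'' by the budget of the adjacent excursion.
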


\begin{rmk}
   $\psi$ defined by \eqref{eq:sub} is a well-known natural candidate of sub-actions of $\phi$; see, for example, \cite{CLT01,Jen19}. As indicated by the proof of \cite[Theorem~2]{Mor09} or by \cite[Theorem~1]{GI20}, the assumption $\phi(c)<\beta(\phi)$ in Proposition~\ref{prop:sub} is necessary. We refer to \cite{Mor09,Jen19,GI20} and references therein for more discussions on existence and regularity of sub-actions for (non-uniformly expanding) circle maps.
\end{rmk}

We postpone the proof of Proposition~\ref{prop:sub} to the next subsection and complete the proof of Theorem~\ref{thm:circle} first. To begin with, let us define a nested sequence of open neighborhoods $V_n$ of $c$ that will be used in both proofs. For each $n\ge 1$,  let $a_n,b_n\in T^{-n}c$ be the two points in $(T^{-n}c)\setminus \{c\}$ that are adjacent to $c$ (since $\# (T^{-n}c) =|\deg T|^n\ge 2^n$,  $a_n\ne b_n$ unless both $n=1$ and $|\deg T|=2$ hold), and let  $V_n$ be the connected component of $X\setminus\{a_n,b_n\}$ containing $c$.

\begin{proof}[Proof of Theorem~\ref{thm:circle}]
  According to Lemma~\ref{lem:reducible}, we only need to show that $(X,T)$ is reducible. Recall Definition~\ref{defn:reducible}. Let $\phi_* \in Lip(X)\setminus\mclp_T$, so that $\phi_*(c)<\beta(\phi_*)$. Then by Proposition~\ref{prop:sub}, there exists an open neighborhood $\mclw$ of $\phi_*$ in $Lip(X)$ and $L>0$ such that for any $\phi\in\mclw$, $\psi$ defined by \eqref{eq:sub} is a Lipschitz sub-action of $\phi$ with $\lip(\psi)\le L$. For $\phi\in \mclw$ and its associated $\psi$, let
  \[
  Z_\phi:=\{x\in X: \phi(x)+\psi(x)=\psi(Tx)+\beta(\phi)\},
  \]
and note that $c\notin  Z_\phi$ provided that $\phi(c)<\beta(\phi)$. Then by continuity, there exists $m\ge 1$ and an open neighborhood $\mclv\subset \mclw$ of $\phi_*$ such that for $\phi\in \mclv$, $Z_\phi\cap V_m=\emptyset$. Also recall the well-known fact that the support of any maximizing measure of $\phi$ is contained in $Z_\phi$. It follows that $\mclv$ and $\Lambda:=\cap_{n\ge 0}T^{-n} (X\setminus V_m)$ are as required in Definition~\ref{defn:reducible}, which completes the proof.  
\end{proof}

\subsection{On the existence of Lipschitz sub-actions}

This subsection is devoted to the proof of Proposition~\ref{prop:sub}. 

To begin with, let us summarize some basic facts of the dynamics of $(X,T)$ that will be used. Firstly, for each $n\ge 1$, there exists $\lambda_n\in (0,1)$ such that $|T'(x)|\ge \lambda_n^{-1}$ for Lebesgue a.e. $x\in X\setminus V_n$. Then for any arc $J$ in $X$ and any $l,s\ge 1$, we have:
\begin{equation}\label{eq:exp contract}
 T^l J\ne X ~\mbox{and}~ \#\{0\le k<l: T^k J \cap V_s =\emptyset \} \ge r \implies |J|\le \lambda_s^r\cdot|T^l J|.
\end{equation}
This is because the map $T^l:J\to T^l J$ is bi-Lipschitz  and $|(T^l)'(x)|\ge \lambda_s^{-r}$ holds for Lebesgue a.e. $x\in J$. Secondly, for each $n\ge 1$, let $\mcli_n$ denote the collection of connected components of $X\setminus (T^{-n}c)$. Then for any $I\in \mcli_n$, there exists a unique fixed point $y$ of $T^n$ with $y\in \overline{I}$ (note that every periodic point of $T$ is of this form); moreover, $y\in I$ iff $y\ne c$ iff $I\cap V_n=\emptyset$.

Now for $\phi_*$ given in the statement of Proposition~\ref{prop:sub}, let us fix some notations and constants, and specify the choice of $\mclv$. Denote $6\ve:=\beta(\phi_*)-\phi_*(c)>0$. Since invariant measures of $T$ can be approximated by measures supported on periodic orbits in the weak-* topology, there exists a periodic point $y_*$ with period $p$ such that $\frac{1}{p} \mcls_p \phi_*(y_*) \ge \beta(\phi_*)-\ve$. Note that $y_*\ne c$, and hence there exists a unique $I_*\in \mcli_p$ with $y_*\in I_*$. By continuity, there exists an open neighborhood $\mclv$ of $\phi_*$ and constants $C_*,L_*>0$ such that the following hold for any $\phi\in \mclv$:
\begin{itemize}
  \item $\beta(\phi)-\phi(c)\ge 5\ve$;
  \item $\frac{1}{p} \mcls_p \phi(y_*) \ge \beta(\phi)-2\ve$;
  \item $|\phi-\beta(\phi)|\le C_*$ on $X$; 
  \item $\lip(\phi)\le L_*$.
\end{itemize} 

From now on let us fix an arbitrary $\phi\in \mclv$ and for convenience,  denote
\[
\overline{\phi}:=\phi-\beta(\phi), \qquad  \psi_n(x):=\max_{y\in T^{-n} x} \mcls_n \overline{\phi}(y), \quad \forall\,x\in X,\forall\, n\ge 1.
\]
To complete the proof, we only need to find constants $C,L>0$ independent of $\phi\in\mclv$, such that 
$\psi_n\le C$ and $\lip(\psi_n)\le L$ for any $n\ge 1$. To proceed, we need the following two claims. 

\begin{clm}\label{clm:freq}
  There exist integers $s,t\ge 1$ and $\eta\in(0,1)$ such that the following hold for any $\phi\in\mclv$. Given $x\in X$ and $n\ge t$, if $y\in T^{-n}x$ satisfies $\psi_n(x)=\mcls_n \overline{\phi}(y)$, then
\[
\#\{0\le k< n: T^k y \notin \overline{V_s} \} \ge \eta n. 
\]

\end{clm}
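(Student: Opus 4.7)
\textbf{Proof plan for Claim~\ref{clm:freq}.} The plan is to obtain matching upper and lower estimates for $\mcls_n\overline\phi(y)$ at the optimizer $y$, with the upper estimate penalizing visits to $\overline{V_s}$ and the lower estimate produced by constructing a backward orbit of $x$ that spends most of its time near the periodic orbit of $y_*$.

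For the upper estimate, I first use that $|V_s|\to 0$ as $s\to\infty$ (a standard fact for such a covering map expanding away from $c$, verified via nested-arc and forward-expansion arguments) to fix $s$ with $L_*|V_s|\le\ve$. Then for every $\phi\in\mclv$ and every $z\in\overline{V_s}$,
\[
\overline\phi(z)\le\overline\phi(c)+L_*|V_s|\le -4\ve,
\]
which, combined with $|\overline\phi|\le C_*$ on $X$, gives
\[
\mcls_n\overline\phi(y)\le -4\ve K+C_*(n-K),\qquad K:=\#\{0\le k<n:T^ky\in\overline{V_s}\}.
\]

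For the lower estimate, I would exhibit an explicit $\tilde y\in T^{-n}x$ whose Birkhoff sum along $\phi$ is at least $-2\ve n-C$ with $C$ uniform over $\phi\in\mclv$, $x\in X$, $n\ge 1$. Fix once and for all a small open neighborhood $U$ of $y_*$, disjoint from $c$, so that $T^p:U\to T^pU$ is a homeomorphism with an inverse branch $S:T^pU\to U$ fixing $y_*$ and contracting with some rate $\lambda\in(0,1)$ (this uses $y_*\ne c$ and uniform expansion off neighborhoods of $c$). Because $T$ is a covering of degree $\ge 2$ and expands away from $c$, the iterates $T^kU$ eventually fill $X$, i.e., $T^{k_0}U=X$ for some integer $k_0$, so every $x\in X$ admits a preimage $z_0\in T^{-k_0}x\cap U$. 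For $n\ge k_0+p$, writing $n-k_0=mp+r$ with $0\le r<p$, set $z_j:=S^jz_0$ for $0\le j\le m$ (so $d(z_j,y_*)\le C_U\lambda^j$) and let $\tilde y$ be any point of $T^{-r}z_m$; then $T^n\tilde y=x$. Since $\lip(\overline\phi)\le L_*$ and the first $p$ forward iterates of $z_j$ shadow those of $y_*$ with controlled Lipschitz growth,
\[
|\mcls_p\overline\phi(z_j)-\mcls_p\overline\phi(y_*)|\le C'\lambda^j.
\]
Using $\mcls_p\overline\phi(y_*)\ge-2\ve p$, $mp\le n$, and absorbing the $r+k_0$ boundary terms (and the trivial regime $n<k_0+p$) into a constant, one obtains
\[
\psi_n(x)\ge\mcls_n\overline\phi(\tilde y)\ge-2\ve n-C.
\]

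Combining the two estimates at the optimizer $y$ yields $-2\ve n-C\le-4\ve K+C_*(n-K)$, which rearranges to
\[
n-K\ge\frac{2\ve}{C_*+4\ve}\,n-\frac{C}{C_*+4\ve}.
\]
Choosing any $\eta\in\bigl(0,\tfrac{2\ve}{C_*+4\ve}\bigr)$ and taking $t$ large enough that the right-hand side exceeds $\eta n$ for all $n\ge t$ completes the claim. The main obstacle is the uniform lower bound: specifically, fixing a single $k_0$ with $T^{k_0}U=X$ and a contracting branch $S$ with a rate $\lambda$ independent of $\phi\in\mclv$. Both are handled by choosing $U$ small enough that $U$ and its $p$ forward iterates stay in a fixed region $X\setminus V_{s_0}$ on which uniform expansion is available, together with the topological exactness of the covering map.
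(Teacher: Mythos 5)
Your proof is correct and follows essentially the same strategy as the paper: an upper bound on $\mcls_n\overline\phi(y)$ at the optimizer that charges a loss of at least $4\ve$ per visit to $\overline{V_s}$, played off against a lower bound on $\psi_n(x)$ obtained by constructing a preimage of $x$ whose orbit shadows the periodic orbit of $y_*$ for most of its length. The only difference is presentational: you realize the shadowing preimage through an inverse branch $S$ of $T^p$ fixing $y_*$ on a small neighborhood $U$ (together with topological exactness to get $T^{k_0}U=X$), whereas the paper works directly with the nested arcs $J=\cap_{0\le j<q}T^{-jp}\overline{I_*}$ and the fact that $T^p\overline{I_*}=X$, which makes the surjectivity and the disjointness of $T^{jp}J$ from $V_p$ (needed for the contraction estimate \eqref{eq:exp contract}) immediate rather than something to be arranged by choosing $U$ carefully.
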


\begin{proof}
Let $t$ be the minimal positive integer such that 
\[
t\ve\ge p\left(\frac{L_*}{1-\lambda_p} + C_*\right) 
\]
holds. Fixing $x\in X$ and $n\ge t$ below, let us show that $\psi_n(x)\ge -3n\ve $ first.  Let $q\ge 1$ be the unique integer determined by $pq\le n<p(q+1)$. Since $T^p\overline{I_*}=X$, there exists $y'\in T^{-pq}x$ with $y'\in J:=\cap_{j=0}^{q-1}T^{-jp}\overline{I_*}$. Also recall that due to the choice of $I_*$, $\overline{I_*}\cap V_p=\emptyset$. Then from $\lip(\phi)\le L_*$, $y',y_*\in J$, $T^{jp}J\cap V_p=\emptyset, 0\le j<q$ and \eqref{eq:exp contract} we have:
\[
|\mcls_{pq}\phi(y')-\mcls_{pq}\phi(y_*)|\le L_*\cdot \sum_{0\le j<q}\sum_{jp\le k<(j+1)p} |T^k J| \le  L_*\cdot \sum_{0\le j<q} p\lambda_p^{q-j-1} \le \frac{pL_*}{1-\lambda_p}.
\]
Therefore,
\[
\mcls_{pq} \overline{\phi}(y') \ge \mcls_{pq} \overline{\phi}(y_*) -\frac{pL_*}{1-\lambda_p} \ge -2pq\ve -\frac{pL_*}{1-\lambda_p}.
\]
Take an arbitrary $y''\in T^{-(n-pq)}y'$, so that $T^n y''=x$.  It follows that 
\[
\psi_n(x) \ge \mcls_n\overline{\phi}(y'')\ge \mcls_{pq}\overline{\phi}(y') - p C_* \ge -2pq\ve -\frac{pL_*}{1-\lambda_p}- p C_*\ge -3n\ve,
\]
where the last inequality is due to $n\ge pq$, $n\ge t$ and the choice of $t$.
 
On the other hand, since $\lip(\phi)\le L_*$, there exists $s\ge 1$ independent of $\phi$ such that for $z\in \overline{V_s}$, $\phi(z)\le \phi(c)+\ve$ and hence $\overline{\phi}(z)\le -4\ve$ . Now assume that $y\in T^{-n}x$ satisfies $\psi_n(x) = \mcls_n \overline{\phi}(y)$. Then for $m:=\{0\le k< n: T^k y \notin \overline{V_s}\}$,
\[
\psi_n(x) = \mcls_n \overline{\phi}(y)  \le -4(n-m)\ve + mC_*.
\]
Combining this with $\psi_n(x)\ge -3n\ve$, the conclusion follows by taking $\eta=\frac{\ve}{C_*+4\ve}$.
\end{proof}

From now on let us fix the constants $s,t$ and $\eta$ introduced in Claim~\ref{clm:freq}.

\begin{clm}\label{clm:lip diff}
Let $I\in\mcli_s$ and let $x\in\overline{I}$. Given $n\ge 1$, let $J\in\mcli_{n+s}$ and $y\in T^{-n}x$ satisfy that $T^nJ=I$, $y\in\overline{J}$ and $\psi_n(x)=\mcls_n\overline{\phi}(y)$. Then for $x'\in\overline{I}$ and $y'\in T^{-n} x'\cap \overline{J}$, we have:
  \[
  \psi_n(x)=\mcls_n\overline{\phi}(y)\le \mcls_n\overline{\phi}(y') + L\cdot\dist(x,x'), \quad\mbox{where}\quad L:=L_*\left(t+\frac{1}{1-\lambda_s^\eta}\right).
  \]   
\end{clm}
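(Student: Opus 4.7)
\emph{Setup.} Let $K$ denote the closed arc in $\overline{J}$ joining $y$ and $y'$. Since $T^n$ maps $\overline{J}$ homeomorphically onto $\overline{I}$, the image $T^n K$ is the closed arc of $\overline{I}$ joining $x$ and $x'$. Arranging $s$ large enough that $|I|\le 1/2$ for every $I\in\mcli_s$ (which we may impose at the outset of the proof of Proposition~\ref{prop:sub}), we have $|T^n K|=\dist(x,x')$. For each $0\le k\le n$, $J_k:=T^k J$ is an element of $\mcli_{n+s-k}$ (using that $T^n$ is injective on $\overline{J}$), and $T^k K\subset\overline{J_k}$. Since the endpoints of $J_j$ lie in $T^{-(n+s-j)}c\supset\{a_s,b_s\}$, the arc $J_j$ lies either entirely inside $V_s$ or entirely outside $\overline{V_s}$; in particular, $T^j y\notin\overline{V_s}$ forces $J_j\cap V_s=\emptyset$ and hence $T^j K\cap V_s=\emptyset$.

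\emph{Reduction.} The Lipschitz bound $\lip(\overline{\phi})\le L_*$ yields
\[
\mcls_n\overline{\phi}(y)-\mcls_n\overline{\phi}(y')\;\le\;L_*\sum_{k=0}^{n-1}\dist(T^k y,T^k y')\;\le\;L_*\sum_{k=0}^{n-1}|T^k K|,
\]
so it suffices to prove $\sum_{k=0}^{n-1}|T^k K|\le (t+\tfrac{1}{1-\lambda_s^\eta})\dist(x,x')$. Applying~\eqref{eq:exp contract} to the arc $T^k K$ with $l=n-k$ (note $T^n K\subset\overline{I}\neq X$) gives
\[
|T^k K|\;\le\;\lambda_s^{m_k}\,|T^n K|,\qquad m_k:=\#\{k\le j<n:\,T^j y\notin\overline{V_s}\}.
\]

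\emph{Splitting and geometric sum.} I would partition $\{0,\ldots,n-1\}$ into the ``tail'' $\{n-t,\ldots,n-1\}$ (with at most $t$ indices, each contributing $\le |T^n K|$ by $\lambda_s^{m_k}\le 1$) and the ``bulk'' $\{0,\ldots,n-t-1\}$, where $n-k\ge t+1$. On the bulk the plan is to establish the frequency bound $m_k\ge\eta(n-k)$ by applying a tail version of Claim~\ref{clm:freq} to the orbit $T^k y,\ldots,T^{n-1}y$. Granting this, a geometric series gives
\[
\sum_{k=0}^{n-t-1}\lambda_s^{m_k}\;\le\;\sum_{k=0}^{n-t-1}\lambda_s^{\eta(n-k)}\;\le\;\sum_{j=t+1}^{n}(\lambda_s^\eta)^j\;\le\;\frac{1}{1-\lambda_s^\eta},
\]
and combining with the tail contribution yields $\sum_{k=0}^{n-1}|T^k K|\le (t+\tfrac{1}{1-\lambda_s^\eta})\dist(x,x')$, as desired.

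\emph{Main obstacle.} The nontrivial step is the frequency bound on tails. Claim~\ref{clm:freq} as stated applies only to the global maximizer of $\psi_{n-k}(x)$, whereas $T^k y$ need not be one. I plan to close this gap by a splicing contradiction: were $m_k<\eta(n-k)$, the combinatorial upper bound in the proof of Claim~\ref{clm:freq} would force $\mcls_{n-k}\overline{\phi}(T^k y)<-3(n-k)\ve$, strictly below the universal lower bound $\psi_{n-k}(x)\ge-3(n-k)\ve$ produced there by explicit construction. Taking an optimal $w^*\in T^{-(n-k)}x$ and an optimal $T^{-k}$-lift of it, the global maximality of $y$ over $T^{-n}x$ transfers this future deficit into the past comparison $\psi_k(w^*)\le\psi_k(T^k y)$; closing this into the desired frequency bound, up to an additive constant that can be absorbed into a prefactor in front of $\tfrac{1}{1-\lambda_s^\eta}$, without a circular use of Lipschitz control on $\psi_k$ itself, is the principal technical hurdle.
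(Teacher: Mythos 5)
Your plan reproduces the paper's argument almost exactly: bound $\mcls_n\overline{\phi}(y)-\mcls_n\overline{\phi}(y')$ by $L_*\sum_{k<n}\dist(T^ky,T^ky')$, split off the last $t$ iterates (each contributing at most $\dist(x,x')$, since $|(T^{n-k})'|\ge 1$), and control the remaining bulk by combining the exponential contraction \eqref{eq:exp contract} with a frequency lower bound
\[
m_k:=\#\{k\le j<n:\ T^jJ\cap V_s=\emptyset\}\ \ge\ \eta(n-k),\qquad 0\le k<n-t,
\]
then summing the geometric series to get the $1/(1-\lambda_s^\eta)$ factor. This is precisely what the paper does, down to the constant $L=L_*\bigl(t+\tfrac{1}{1-\lambda_s^\eta}\bigr)$. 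Your preliminary remark that one may take $s$ large enough so that all arcs in $\mcli_s$ have length at most $1/2$ (so that arc length equals distance) is a harmless normalization; the paper avoids it by invoking $|(T^{n-k})'|\ge 1$ directly.

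The step you single out as the main obstacle is indeed the step the paper takes without elaboration. In the paper's proof one reads: ``Then from the Claim~\ref{clm:freq} we know that $\#\{0\le j<n-k: T^{j+k}J\cap V_s=\emptyset\}\ge\eta(n-k)$ for all $0\le k<n-t$.'' Claim~\ref{clm:freq}, however, is stated for a preimage $y$ that realizes the global maximum $\psi_{n-k}(x)$, and, as you correctly observe, $T^ky$ need \emph{not} be a maximizer of $\psi_{n-k}(x)$ when $y$ maximizes $\psi_n(x)$ (the easy dynamic-programming identity $\psi_k(T^ky)=\mcls_k\overline{\phi}(y)$ gives optimality of the \emph{prefix}, not the \emph{suffix}). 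So your concern about this step is well-placed: the paper invokes Claim~\ref{clm:freq} for the tail segment $[k,n)$ without explaining why its hypothesis holds there, and neither your splicing attempt nor the naive lift of an optimal $w^*\in T^{-(n-k)}x$ (which, as you note, would require Lipschitz control of $\psi_k$ and is therefore circular) closes the gap. What \emph{does} follow from the strengthened reading of Claim~\ref{clm:freq} (which really only needs $\mcls_{n-k}\overline{\phi}(T^ky)\ge -3(n-k)\ve$, not optimality) is that the bulk estimate reduces to establishing exactly this lower bound for the suffix Birkhoff sum; that is the precise missing lemma, and your proposal, like the paper's proof, leaves it open. In short: you have correctly reconstructed the intended argument and correctly identified the one nontrivial point that the paper passes over; you do not close it, and you say so honestly.
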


\begin{proof}
First note that $\dist(T^k y,T^ky')\le \dist(x,x')$ for $0\le k<n$. This is because $T^ky,T^ky'\in T^k\overline{J}$, $T^{n-k}: T^k\overline{J}\to \overline{I}$ is bi-Lipschitz and $|(T^{n-k})'|\ge 1$ holds Lebesgue almost everywhere. It follows that
\[
|\mcls_n\overline{\phi}(y)-\mcls_n\overline{\phi}(y')|\le L_*\cdot\sum_{0\le k<n}\dist(T^ky,T^ky')\le nL_*\cdot\dist(x,x').
\]
Then the conclusion follows when $n\le t$. Now let $n>t$. Then we have:
 \[
\begin{split}
   |\mcls_n\overline{\phi}(y)- \mcls_n\overline{\phi}(y')| &   = |\mcls_{n-t}\phi(y)-\mcls_{n-t}\phi(y')| +  |\mcls_{t}\phi(T^{n-t}y)-\mcls_{t}\phi(T^{n-t}y')| \\
     & \le |\mcls_{n-t}\phi(y)-\mcls_{n-t}\phi(y')| + tL_*\cdot\dist(x,x').
\end{split}
\]
It remains to estimate $|\mcls_{n-t}\phi(y)-\mcls_{n-t}\phi(y')|$. To this end, first note that for $0\le m<n$, either $T^m \overline{J}\subset \overline{V_s}$ or $T^m J\cap V_s =\emptyset$ holds; therefore, $T^m y\notin \overline{V_s}$ implies that $T^m J\cap V_s =\emptyset$. Then from the Claim~\ref{clm:freq} we know that 
 \[
 \#\{ 0 \le j < n-k : T^{j+k}J \cap V_s=\emptyset \} \ge \eta (n-k), \quad \forall\, 0\le k<n-t.
 \]
Combining this with \eqref{eq:exp contract} yields that 
\[
\dist(T^k y, T^k y')\le\lambda_s^{\eta (n-k)}\cdot \dist(x,x'), \quad \forall\, 0\le k<n-t.
\]
It follows that
\[
|\mcls_{n-t}\phi(y)-\mcls_{n-t}\phi(y')|\le L_*\cdot \sum_{0\le k<n-t} \dist(T^k y, T^k y') \le \frac{L_*}{1-\lambda_s^\eta}\cdot \dist(x,x'),
\]
which completes the proof.
\end{proof}

Now we are ready to complete the proof of Proposition~\ref{prop:sub}. 

Let us first show that $\psi_n\le sC_*+L$ for each $n\ge 1$. Given $x\in X$, let $y$ and $I,J$ be as given in Claim~\ref{clm:lip diff}. Since $J\in\mcli_{n+s}$, there exists $y'\in \overline{J}$ with $T^{n+s}y'=y'$. Then we have:
\[
\psi_n(x)=\mcls_n\overline{\phi}(y)\le \mcls_n\overline{\phi}(y')+ L,
\]
and 
\[
\mcls_n\overline{\phi}(y')= \mcls_{n+s}\overline{\phi}(y') - \mcls_{s}\overline{\phi}(T^n y')\le 0 + sC_*.
\]
The conclusion follows.

Finally, for each $n\ge 1$, let us show that $\lip(\psi_n)\le L$. Due to the linear structure of the unit circle, it suffices to show that for any $I\in\mcli_s$ and any $x,x'\in \overline{I}$, $\psi_n(x)\le \psi_n(x')+L\cdot\dist(x,x')$. This follows directly from Claim~\ref{clm:lip diff}.

\appendix

\section{Proof of Lemma~\ref{lem:locking}}\label{app:lock}

In this appendix we provide a self-contained proof of Lemma~\ref{lem:locking} following the mainline in \cite{BZ15}. It turns out that compactness of $X$ is redundant in the argument: as a generalization of Lemma~\ref{lem:locking}, we shall prove Proposition~\ref{prop:lock} below instead. Let us follow the terminologies and notations introduced in \S~\ref{se:intro}.
Since Lipschitz functions in a general metric space $X$ may not be bounded, let us introduce
\[
Lip^*(X):=\{f\in Lip(X) : \text{$f$ is bounded from above}\}.
\] 
Then $\int \phi\dif\nu\in [-\infty,+\infty)$ is well-defined for any $\phi\in Lip^*(X)$ and any $\nu\in \mclm_T$ (provided that $\mclm_T$ is non-empty). 

\begin{prop}\label{prop:lock}
  Let $X$ be a metric space and let $T:X\to X$ be continuous. Suppose that there exists a periodic orbit $\mclo$ of $T$ and that
  $\phi\in Lip^*(X)$ is maximized by $\mclo$. Then  for any $\ve>0$, there exists $\delta>0$ such that for each $\psi\in Lip^*(X)$ with $\lip(\psi)\le \delta$,
  $\phi-\ve\cdot\dist(\cdot,\mclo)+\psi \in Lip^*(X)$ is uniquely maximized by $\mclo$.
\end{prop}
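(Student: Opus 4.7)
The plan is to adapt the Yuan--Hunt/Bochi--Zhang locking argument so that it runs using only continuity of $T$. Write $F:=\phi-\ve\dist(\cdot,\mclo)+\psi$ and $\beta_0:=\beta(\phi)$. Since $\mu_\mclo$ is the unique $T$-invariant probability supported on $\mclo$, my goal is to show that once $\lip(\psi)$ is small, every $T$-invariant probability $\mu\ne\mu_\mclo$ satisfies $\int F\dif\mu<\int F\dif\mu_\mclo$. Any $\mu$ with $\int\dist(\cdot,\mclo)\dif\mu=+\infty$ already forces $\int F\dif\mu=-\infty$ because $\phi,\psi$ are bounded above, so the interesting case is $\int\dist(\cdot,\mclo)\dif\mu<\infty$. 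Using $\int F\dif\mu_\mclo=\beta_0+\int\psi\dif\mu_\mclo$ together with $\int\phi\dif\mu\le\beta_0$, one gets the reduction
\[
\int F\dif\mu-\int F\dif\mu_\mclo \;\le\; -\ve\int\dist(\cdot,\mclo)\dif\mu + \Big(\int\psi\dif\mu-\int\psi\dif\mu_\mclo\Big).
\]

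Everything then hinges on the following Kantorovich--Rubinstein-type estimate, which I will prove below: there exists $K>0$, depending only on $T$ and $\mclo$, such that for every $T$-invariant Borel probability $\mu$ with $\int\dist(\cdot,\mclo)\dif\mu<\infty$ and every $\psi\in Lip^*(X)$,
\[
\Big|\int\psi\dif\mu-\int\psi\dif\mu_\mclo\Big|\;\le\;K\cdot\lip(\psi)\cdot\int\dist(\cdot,\mclo)\dif\mu.
\]
Granting this, taking $\delta:=\ve/(2K)$ makes the previous display at most $(K\lip(\psi)-\ve)\int\dist(\cdot,\mclo)\dif\mu\le 0$, with equality only if $\int\dist(\cdot,\mclo)\dif\mu=0$, i.e.\ $\supp\mu\subset\mclo$, which forces $\mu=\mu_\mclo$. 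This simultaneously shows that $\mu_\mclo$ is a maximizer of $F$ and that no other invariant probability matches it, which is the proposition. The main obstacle is therefore to prove the Kantorovich-type estimate without any Lipschitz regularity of $T$.

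For that estimate, write $\mclo=\{x_0,\ldots,x_{p-1}\}$ with $Tx_i=x_{i+1\bmod p}$, and choose $\eta_0>0$ so small that the balls $B(x_i,\eta_0)$ are pairwise disjoint. I will define a Borel partition $X=\bigsqcup_i A_i$ via a nearest-point projection to $\mclo$ (ties broken by index), arranged so that $B(x_i,\eta_0/2)\subset A_i$ and $\dist(x,x_i)=\dist(x,\mclo)$ for every $x\in A_i$. Using continuity of $T$ at the finitely many orbit points, I can select $\tilde\eta\in(0,\eta_0/2]$ so that $T(B(x_i,\tilde\eta))\subset B(x_{i+1},\eta_0/2)\subset A_{i+1}$ for each $i$. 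Setting $\alpha_i:=\mu(A_i)$, invariance combined with $B(x_i,\tilde\eta)\subset T^{-1}A_{i+1}$ yields $\alpha_{i+1}\ge \alpha_i-\mu(A_i\setminus B(x_i,\tilde\eta))$, and Markov's inequality applied to $A_i\setminus B(x_i,\tilde\eta)\subset\{\dist(\cdot,\mclo)\ge\tilde\eta\}$ gives $\alpha_{i+1}-\alpha_i\ge -\tilde\eta^{-1}\int\dist(\cdot,\mclo)\dif\mu$. Iterating cyclically around $\{0,\ldots,p-1\}$ and using $\sum_i\alpha_i=1$ will produce $|\alpha_i-1/p|\le (p/\tilde\eta)\int\dist(\cdot,\mclo)\dif\mu$ for every $i$.

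Finally, decompose
\[
\int\psi\dif\mu-\int\psi\dif\mu_\mclo \;=\; \sum_i\Big(\alpha_i-\tfrac1p\Big)\psi(x_i) \;+\; \sum_i\int_{A_i}(\psi-\psi(x_i))\dif\mu.
\]
The second sum is bounded by $\lip(\psi)\int\dist(\cdot,\mclo)\dif\mu$ because $\dist(\cdot,x_i)=\dist(\cdot,\mclo)$ on $A_i$. Using $\sum_i(\alpha_i-\tfrac1p)=0$, I will rewrite the first sum as $\sum_i(\alpha_i-\tfrac1p)(\psi(x_i)-\psi(x_0))$, which is bounded by $(p^2\diam(\mclo)/\tilde\eta)\lip(\psi)\int\dist(\cdot,\mclo)\dif\mu$. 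Combining these yields the Kantorovich-type estimate with $K:=1+p^2\diam(\mclo)/\tilde\eta$, completing the proof. The one delicate point is that $\tilde\eta$ is extracted from continuity of $T$ only at the finite set $\mclo$; the rest of the argument relies solely on invariance of $\mu$, so compactness of $X$ is never used.
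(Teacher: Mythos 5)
Your proposal is correct and reaches the same key inequality as the paper,
\[
\Bigl|\int\psi\,d\nu-\int\psi\,d\mu_\mclo\Bigr|\le K\,\lip(\psi)\int\dist(\cdot,\mclo)\,d\nu,
\]
but by a genuinely different route. The paper (following Bochi--Zhang) proves a pointwise ``binding'' estimate first: for every $x$ there is a $y\in\mclo$ with $\sum_{k<p}\dist(T^kx,T^ky)\le C\sum_{k<p}\dist(T^kx,\mclo)$, obtained by a dichotomy (either $x$ is within a continuity radius $\delta$ of $\mclo$, in which case the nearest orbit point shadows perfectly, or $x$ is farther than $\delta$, in which case the constant is absorbed via $\diam(\mclo)/\delta$); this controls Birkhoff sums of $\psi$ over one period and the desired inequality follows by integrating against $\nu$. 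You instead work directly at the level of the measure: a Voronoi partition $\{A_i\}$ around the orbit points, Markov's inequality to control $\mu(A_i\setminus B(x_i,\tilde\eta))$ by $\tilde\eta^{-1}\int\dist(\cdot,\mclo)\,d\mu$, and $T$-invariance to propagate the estimate $\alpha_{i+1}\ge\alpha_i-\tilde\eta^{-1}\int\dist(\cdot,\mclo)\,d\mu$ cyclically and extract $|\alpha_i-1/p|\lesssim\tilde\eta^{-1}\int\dist(\cdot,\mclo)\,d\mu$; the final decomposition of $\int\psi\,d\mu-\int\psi\,d\mu_\mclo$ into an ``in-cell'' term (bounded by $\lip(\psi)\int\dist(\cdot,\mclo)\,d\mu$) and a ``weights'' term $\sum_i(\alpha_i-1/p)\psi(x_i)$ then finishes the proof. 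Both arguments consume the same raw ingredients (continuity of $T$ at the finitely many orbit points, the spectral gap $\min_{i\ne j}\dist(x_i,x_j)$, the diameter $\diam(\mclo)$, and $T$-invariance of $\nu$), and neither uses compactness of $X$. The paper's pointwise lemma carries slightly more information and is arguably tighter; your partition argument is softer, avoids the case split, and makes the role of invariance more transparent, at the cost of a somewhat larger constant $K$. One small point of hygiene: ``balls $B(x_i,\eta_0)$ pairwise disjoint'' only guarantees $\dist(x_i,x_j)\ge\eta_0$, but a one-line check confirms this still forces $B(x_i,\eta_0/2)\subset A_i$, so your partition is well set up; alternatively, just take $\eta_0:=\tfrac12\min_{i\ne j}\dist(x_i,x_j)$ as the paper does to get $\dist(x_i,x_j)\ge2\eta_0$ outright.
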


To prove Proposition~\ref{prop:lock}, we need the facts below (corresponding to \cite[Lemma~2]{BZ15}). 

\begin{lem}\label{lem:domination}
 Let $(X,T)$ and $\mclo$ be as given in Proposition~\ref{prop:lock}. Let $p=\#\mclo$ and let $\mu=\frac{1}{p}\sum_{y\in\mclo}\delta_{y}$ denote the averaged Dirac measure supported on $\mclo$. Then there exists $C\ge 1$ such that the following hold.
\begin{itemize}
  \item [(1)] For any $x\in X$, there exists $y\in \mclo$ such that
\begin{equation}\label{eq:binding}
  \sum_{0\le k<p}\dist(T^k x,T^k y) \le C \cdot\sum_{0\le k<p}\dist(T^kx,\mclo).
\end{equation}
  \item [(2)]  For any $\psi\in Lip(X)$, we have:
\begin{equation}\label{eq:bound by dist}
  \big|\sum_{0\le k<p}\psi(T^k x) - p\int \psi\dif\mu\big|  \le C\lip(\psi) \cdot \sum_{0\le k<p} \dist(T^k x,\mclo), \quad \forall x\in X.
\end{equation}
Moreover, if $\psi\in Lip^*(X)$ additionally, then for any $\nu\in\mclm_T$, we have: 
  \[
  \big|\int\psi\dif\nu -\int\psi\dif\mu\big|\le  C\lip(\psi) \cdot \int  \dist(\cdot,\mclo)\dif\nu.
  \]
\end{itemize}

\end{lem}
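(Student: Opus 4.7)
Write $\rho_k = \dist(T^kx,\mclo)$, $S = \sum_{0\le k<p}\rho_k$, and let $y'_k \in \mclo$ be a nearest orbit point to $T^kx$. The case $p=1$ is immediate, so assume $p \ge 2$ and set $d_0 := \min\{\dist(u,v) : u,v\in \mclo, u\ne v\} > 0$. Since $T$ is continuous at each of the finitely many points of $\mclo$, there exists $\delta > 0$ such that $\dist(w, y) < \delta$ with $y \in \mclo$ forces $\dist(Tw, Ty) < d_0/2$; set $\ve_0 := \min(\delta, d_0/2)$.

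For part~(1) I would proceed by a dichotomy. \emph{Case A}: $\rho_k < \ve_0$ for every $0 \le k < p$. Here I would verify by induction on $k$ that $y'_{k+1} = T y'_k$: the hypothesis $\rho_k < \delta$ together with the continuity estimate above yields $\dist(T^{k+1}x, T y'_k) < d_0/2$, while $\rho_{k+1} < d_0/2$ forces $T y'_k$ to coincide with the unique orbit point within distance $d_0/2$ of $T^{k+1}x$, namely $y'_{k+1}$. Setting $y := y'_0$ then gives $\sum_k \dist(T^kx, T^ky) = \sum_k \rho_k = S$. \emph{Case B}: $\rho_{k_0} \ge \ve_0$ for some $k_0$, so $S \ge \ve_0$. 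For any $y \in \mclo$, inserting $y'_k$ via the triangle inequality and using $\dist(y'_k, T^ky)\le \diam(\mclo)$ gives $\sum_k \dist(T^kx, T^ky) \le S + p\diam(\mclo)$, and the additive term is absorbed via $p\diam(\mclo)\le (p\diam(\mclo)/\ve_0)\,S$. Taking the maximum of the two resulting constants yields a uniform $C \ge 1$ that works for every $x\in X$.

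For part~(2), the first estimate is a one-line corollary of (1): since $\mclo = \{T^ky : 0\le k<p\}$ for any $y\in\mclo$, we have $p\int\psi\dif\mu = \sum_{0\le k<p}\psi(T^ky)$, and choosing $y$ as in (1) turns the Lipschitz bound $|\sum_k\psi(T^kx) - \sum_k\psi(T^ky)| \le \lip(\psi)\sum_k\dist(T^kx, T^ky)$ into the asserted inequality. For the measure-theoretic statement I would invoke $T$-invariance to write $\int\psi\dif\nu = \tfrac{1}{p}\int\sum_{0\le k<p}\psi\circ T^k\dif\nu$, insert $\int\psi\dif\mu$ inside the integral, apply the pointwise estimate just proved, and then use $T$-invariance of $\nu$ once more to replace each $\int\dist(T^kx,\mclo)\dif\nu$ by $\int\dist(\cdot,\mclo)\dif\nu$. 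The integrability issue when $\int\dist(\cdot,\mclo)\dif\nu = +\infty$ is handled separately: the lower bound $\psi \ge \min_{\mclo}\psi - \lip(\psi)\dist(\cdot,\mclo)$ shows both sides are then $+\infty$ and the inequality is vacuous.

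The main obstacle is Case~A of (1): since $T$ is only continuous and $X$ is not assumed compact, there is no global modulus of continuity to exploit. The saving grace is that $\mclo$ is a \emph{finite} set, so uniform continuity of $T$ on a small neighborhood of $\mclo$ comes for free, and this together with the uniform separation $d_0$ between distinct orbit points is exactly what forces the nearest-point assignment $k\mapsto y'_k$ to track the dynamics when the orbit stays close to $\mclo$. This is essentially the only place where the special structure of a periodic orbit (as opposed to an arbitrary invariant set) is used.
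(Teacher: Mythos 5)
Your argument is correct and follows essentially the same route as the paper's proof: both use the separation radius on the finite orbit $\mclo$, a near/far dichotomy, and in the near case show the nearest-orbit-point assignment $k\mapsto y'_k$ tracks the dynamics (you do this via one-step continuity of $T$ near $\mclo$ plus an induction, the paper via a single continuity estimate covering $T,\dots,T^{p-1}$ at once, correspondingly splitting on $\dist(x,\mclo)<\delta$ rather than on all $\dist(T^kx,\mclo)$ being small), while the far case absorbs the additive term $p\,\diam(\mclo)$ exactly as you describe, and part~(2) is handled identically via the Lipschitz estimate and integration against $\nu$. One small imprecision worth tidying: when $\int\dist(\cdot,\mclo)\dif\nu=+\infty$, your lower bound on $\psi$ does not in fact force the left-hand side $\big|\int\psi\dif\nu-\int\psi\dif\mu\big|$ to be $+\infty$, but the inequality is nonetheless trivial because the right-hand side is already $+\infty$ whenever $\lip(\psi)>0$, and when $\lip(\psi)=0$ both sides vanish.
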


\begin{proof} (1). The case $p=1$ is trivial. Now suppose $p\ge 2$ and let
\[
\Delta=\frac{1}{2}\min\{\dist(y_1,y_2):y_1,y_2\in\mclo, y_1\ne y_2\}>0.
\]
By continuity, there exists $0<\delta\le \Delta$ such that if $x\in X$ and $y\in\mclo$ satisfies $\dist(x,y)<\delta$, then $\dist(T^kx,T^ky)<\Delta$ for $0\le k<p$. Given $x\in X$, take $y\in \mclo$ such that $\dist(x,y)=\dist(x,\mclo)$.
\begin{itemize}
  \item If $\dist(x,\mclo)<\delta$, then due to the choice of $\delta$, $\dist(T^kx,T^k y)=\dist(T^kx,\mclo)$ for $0\le k<p$, and hence \eqref{eq:binding} becomes equality for $C=1$.
  \item Otherwise, $\dist(x,\mclo)\ge \delta$. On the other hand,
\[
\dist(T^k x,T^k y) \le \dist(T^k x,\mclo) + D, \quad 0\le k<p,
\]
where $D:=\max\{\dist(y_1,y_2):y_1,y_2\in\mclo\}$. Then \eqref{eq:binding} holds for $C=1+\frac{pD}{\delta}$.
\end{itemize}

(2). Note that $p\int \psi\dif\mu=\sum_{0\le k<p} \psi(T^k y),\forall y\in \mclo$. Then \eqref{eq:bound by dist} follows from  assertion (1) together with the estimate below: 
\[
|\sum_{0\le k<p}(\psi(T^k x)-\psi(T^k y))| \le \mathrm{lip}(\psi)\cdot 
\sum_{0\le k<p}\dist(T^k x,T^k y),\quad \forall x\in X,\forall y\in\mclo.
\]
The ``moreover" part follows from integrating \eqref{eq:bound by dist} with respect to $\nu$. 
\end{proof}

\begin{proof}[Proof of Proposition~\ref{prop:lock}] 
  Let $p,\mu,C$ be as given in Lemma~\ref{lem:domination}. Fixing $\ve>0$ and $\psi\in Lip^*(X)$ with $\lip(\psi)<\frac{\ve}{C}$, denote $\varphi:=\phi-\ve\cdot\dist(\cdot,\mclo)+\psi$. 
  Given $\nu\in \mclm_T$ with $\nu\ne \mu$, the goal is to show $\int \varphi\dif\nu<\int \varphi\dif\mu$. We may assume that $\int\dist(\cdot,\mclo)\dif\nu<+\infty$, because otherwise the conclusion is evident. Then 
  \[
  \int\psi \dif\nu - \int\psi \dif\mu \le C\lip(\psi)\cdot\int\dist(\cdot,\mclo)\dif\nu < \ve \int\dist(\cdot,\mclo)\dif\nu,
  \] 
where the ``$\le$" is due to the ``moreover" part in assertion (2) of Lemma~\ref{lem:domination}, and the ``$<$" is because $C\lip(\psi)<\ve$ and $\int\dist(\cdot,\mclo)\dif\nu\in (0,+\infty)$.  
Combining the displayed inequality above with $\int \phi\dif\nu \le\int \phi\dif\mu$ and  $\int\dist(\cdot,\mclo)\dif\mu=0$, the proof is done.
\end{proof}

\section{Proof of Lemma~\ref{lem:Morris}}\label{app:Morris}

This appendix is devoted to providing a self-contained proof of Lemma~\ref{lem:Morris}, which is essentially a reproduction of Morris' original proof in \cite{Mor07}. 

To begin with, note that $\gamma<+\infty$ implies that
\[
\limsup_{n \to\infty}\frac{\mcls_n\phi(x)}{n}\le 0, \quad \forall x\in X.
\]
Combining this with Birkhoff's ergodic theorem and $\int \phi\dif\mu=0$, we have:
\begin{equation}\label{eq:a.e. 0}
  \lim_{n \to\infty}\frac{\mcls_n\phi(x)}{n} = 0, \quad \mu\mbox{-a.e.}~x\in X.
\end{equation}

To complete the proof, assume that
\[
\mcls_N\phi(x_0) < \alpha
\] 
holds for some $\alpha\in\mbbr$, $x_0\in \supp \mu$ and $N\ge 1$. Then it suffices to show that $\alpha\ge - \gamma$. 

Since $T$ is continuous and $\phi$ is upper semi-continuous,  $\mcls_N\phi$ is also upper semi-continuous, so there exists an open neighborhood $V_0$ of $x_0$ such that 
\begin{equation}\label{eq:minus gamma}
  \mcls_N\phi(y) <  \alpha, \quad \forall y\in V_0.
\end{equation}
Since $x_0\in \supp \mu$, $\mu(V_0)>0$. Let 
\[
V:=\{x\in V_0: \mbox{there are infinitely many $n\ge 1$ such that  $T^n x\in V_0$} \}.
\]
Then $V$ is Borel with $\mu(V)=\mu(V_0)>0$, and $x\in V\cap T^{-n}V_0$ for some $n\ge 1$ implies that $T^n x\in V$. Let $\mu_V=\frac{1}{\mu(V)}\mu|_V$, which is a Borel probability measure on $V$. 

For $x\in V$, define its first return time $r_V$ and first return map $R_V$ to $V$ as follows:
\[
r_V(x):=\inf \{n\ge 1: T^n x \in V\}, \quad R_V(x):=T^{r_V(x)}(x).
\]
Then $r_V:V\to \mbbn$ and $R_V:V\to V$ are well-defined and Borel, and $\mu_V$ is $R_V$-invariant. Moreover, by Kac's Lemma (see, for example, \cite[Theorem~1.2.2]{VO16}),
\[
\int_V r_V\dif\mu = \mu(\cup_{n=0}^\infty T^{-n}V)\in (0,1].
\]
In particular, $r_V$ is $\mu_V$ integrable.

Given $x\in V$, define $t_0(x):=0$ and $t_{n+1}(x):=t_n(x)+r_V(T^{t_n(x)}x)$ for any $n\ge 0$ by induction on $n$. Note that $t_n(x)$ is nothing but the $n$-th Birkhoff sum of $r_V$ along the $R_V$-orbit of $x$. Then applying Birkhoff's ergodic theorem to the measure preserving system $(V,R_V,\mu_V)$ with integrable function $r_V$, we conclude that
\[
\lim_{n\to \infty} \frac{t_n(x)}{n} 
\]
exists for $\mu_V$-a.e. $x\in V$ and 
\[
\int_V \lim_{n\to \infty} \frac{t_n(x)}{n} \dif\mu_V(x) =\int_V r_V(x)\dif \mu_V(x)>0.
\]
In particular, there exists a Borel subset $W$ of $V$ with $\mu(W)>0$, such that 
\[
t(x):=\lim_{n\to \infty} \frac{t_n(x)}{n}\in (0, +\infty)
\] 
exists for any $x\in W$.
 
On the other hand, given $x\in V$, by the definition of $t_n(x)$, for any $m\ge 1$, $T^{t_{(m-1)N}(x)}(x)\in V$ and $t_{mN}(x)\ge t_{(m-1)N}(x)+N$. Combining this with \eqref{eq:minus gamma} and the definition of $\gamma$ yields that
\[
\sum_{t_{(m-1)N}(x) \le k< t_{mN}(x)} \phi(T^k x) \le \alpha+\gamma.
\]
It follows that
\[
\limsup_{m\to\infty} \frac{\mcls_{t_{mN}(x)} \phi(x)}{m} \le \alpha+\gamma, \quad \forall x\in V.
\]

Now for any $x\in W$, we have: 
\[
\begin{split}
   \liminf_{n \to\infty}\frac{\mcls_n\phi(x)}{n}  & \le \liminf_{m\to\infty} \frac{\mcls_{t_{mN}(x)} \phi(x)}{t_{mN}(x)} \\
     & = \frac{\liminf\limits_{m\to\infty}\frac{\mcls_{t_{mN}(x)}\phi(x)}{m}} {N\cdot \lim\limits_{m\to\infty}\frac{t_{mN}(x)}{mN}} \\
     & \le \frac{\alpha+\gamma}{N\cdot t(x)}.
\end{split}
\]
Combining this with $t(x)<+\infty$ and \eqref{eq:a.e. 0}, we conclude that $\alpha+\gamma\ge 0$, which completes the proof.

\medskip
\noindent {\bf Acknowledgements.}
We thank the anonymous referees for valuable suggestions. BG is partially supported by the Scientific Research Fund of the Education Department of Hunan Province (No. 22B0024), and by National Natural Science Foundation of China (No. 12071118). RG is partially supported by the Taishan Scholar Project of Shandong Province (No. tsqnz20230614).

\bibliographystyle{plain}             
\bibliography{refer}

\end{document}